\theoremstyle{plain} 
\newtheorem{theorem}{Theorem}[section]
\newtheorem{proposition}[theorem]{Proposition}
\newtheorem{corollary}[theorem]{Corollary}
\newtheorem{lemma}[theorem]{Lemma}
\newtheorem{definition}[theorem]{Definition}
\theoremstyle{remark}
\newtheorem{remark}[theorem]{Remark}
\newcommand{\R}{\mathbb{R}}
\newcommand{\N}{\mathbb{N}}
\newcommand{\eps}{\varepsilon}
\newcommand{\ind}{\mathds{1}}
\newcommand{\HH}{\mathcal{H}}
\renewcommand{\b}{\backslash}
\renewcommand{\O}{\Omega}
\newcommand{\q}{\delta}
\def\XXint#1#2#3{{\setbox0=\hbox{$#1{#2#3}{\int}$}
     \vcenter{\hbox{$#2#3$}}\kern-.5\wd0}}
\numberwithin{equation}{section}
\title{Free boundary regularity for a multiphase shape optimization problem}
\author{Luca Spolaor, Baptiste Trey, Bozhidar Velichkov}
\begin{document}

\begin{abstract}
In this paper we prove a $C^{1,\alpha}$ regularity result in dimension two for almost-minimizers of the constrained one-phase Alt-Caffarelli and the two-phase Alt-Caffarelli-Friedman functionals for an energy with variable coefficients. As a consequence, we deduce the complete regularity of solutions of a multiphase shape optimization problem for the first eigenvalue of the Dirichlet-Laplacian up to the fixed boundary. One of the main ingredient is a new application of the epiperimetric-inequality of \cite{spve} up to the boundary. While the framework that leads to this application is valid in every dimension, the epiperimetric inequality is known only in dimension two, thus the restriction on the dimension.
\end{abstract}
\maketitle

\section{Introduction}
In \cite{dt} David and Toro studied properties of the free boundaries for almost-minimizers of the one-phase Alt-Caffarelli and the two-phase Alt-Caffarelli-Friedman functionals considered in \cite{altcaf} and \cite{alcafr} respectively. They proved that, in any dimension, almost-minimizers are non-degenerate and Lipschitz continuous (see also \cite{pippe}). More recently, David-Engelstein-Toro proved in \cite{det} that, under suitable assumption, the free boundaries of almost-minimizers are uniformly rectifiable for both functionals, and almost everywhere given as the graph of a $C^{1,\alpha}$ function for the one-phase functional. 

In this paper we extend these regularity results in dimension two, proving general $C^{1,\alpha}$ regularity results for the free boundary associated to:

{\scriptsize\bf(OP)} {Almost-minimizers} of the one-phase Alt-Caffarelli functional for an operator with variable coefficients, which may also satisfy a further geometric inclusion constraint (Theorem \ref{t:constrained} and Corollary \ref{t:unconstrained});

{\scriptsize\bf(TP)} {Almost-minimizers} of the two-phase Alt-Caffarelli-Friedman functional for an operator with variable coefficients (Theorem \ref{t:two-phase}).

\noindent As pointed out by David-Toro, the difficulty of dealing with almost-minimizers is that they do not satisfy an equation. To overcome this, the approach we follow in this paper is different from the one given in \cite{det} and relies on an epiperimetric inequality (see \cite{spve}) and on a Weiss' almost-monotonicity formula, both of which are variational techniques. We stress out that the only obstruction to a generalization of this proof to any dimension comes from the fact that epiperimetric inequality is only known in dimension two.

The second purpose of this paper is a regularity result for solutions of a multiphase shape optimization problem for the first eigenvalue of the Dirichlet-Laplacian. In \cite{benve} Bogosel and Velichkov proved properties concerning the optimal shapes, such that the lack of triple points, and the Lipschitz continuity of the corresponding eigenfunctions. In this paper we prove, using the above results for almost-minimizers of the one-phase and the two-phase functionals, that if $(\O_1,\dots,\O_n)$ is an optimal shape, then the entire boundary $\partial\O_i$, $i=1,\dots,n$, is $C^{1,\alpha}$ regular (Theorem \ref{t:multi}). This line of study has become increasingly important in recent years, where regularity results for solutions of free-boundary problems, and in particular almost-minimizers, have been applied to study the regularity of shape optimization problems involving eigenvalues of the Dirichlet-Laplacian (see for instance \cite{mateve, KrLi,KrLi2, CaShYe}).

\subsection{Regularity for almost-minimizers}
Throughout this  paper we will use the following notations. 
We fix a matrix valued function $A=(a_{ij})_{ij}: B_2\to Sym_2^+$, where ${Sym}_k^+$ denotes the family of the real positive symmetric $k\times k$ matrices, for which there are constants $\delta,C_{\text{\tiny\sc A}},M_{\text{\tiny\sc A}}>0$ such that
$$|a_{ij}(x)-a_{ij}(y)|\le C_{\text{\tiny\sc A}}|x-y|^\delta,\quad\text{for every}\quad i,j\quad\text{and}\quad  x,y\in B_2\,;$$ 
$$M_{\text{\tiny\sc A}}^{-1}|\xi|^2\le \xi\cdot A(x)\xi=\sum_{i,j=1}^2\xi_i\xi_ja_{ij}(x)\le M_{\text{\tiny\sc A}}|\xi|^2,\quad\text{for every}\quad x\in B_2 \quad\text{and}\quad \xi\in\R^2.$$
We fix $Q_{\text{\tiny\sc op}}$, $Q_{\text{\tiny\sc tp}}^+$ and $Q_{\text{\tiny\sc tp}}^-$ to be three different $\delta$-H\"older continuous functions on $B_2$, for which there is a constant $C_{\text{\tiny\sc Q}}>0$ such that $C_{\text{\tiny\sc Q}}^{-1}\le Q_{\text{\tiny\sc op}}, Q_{\text{\tiny\sc tp}}^+, Q_{\text{\tiny\sc tp}}^-\le C_{\text{\tiny\sc Q}}$ on $B_2$.
Finally, for every function $u:\R^2\to\R$, we will use the following standard notations 
$$u_\pm(x):=\max\{\pm u(x),0\}\,,\quad\Omega_u:=\{u\neq 0\}\,,\quad\Omega_u^+:=\{u> 0\}\quad\text{and}\quad \Omega_u^-:=\{u< 0\}.$$

We are now in position to state our main free boundary regularity results.

\noindent {\bf The one-phase free boundaries.} 
For every $u\in H^1(B_2)$, $x_0 \in B_1$ and $r \in (0,1)$, we define the one-phase functional 
$$J_{\text{\tiny\sc op}}(u,x_0,r)=\int_{B_r(x_0)}\Big(\sum_{i,j}a_{ij}(x)\frac{\partial u}{\partial x_i}\frac{\partial u}{\partial x_j}+Q_{\text{\tiny\sc op}}(x)\ind_{\{u>0\}}\Big)\,dx.$$
Here and after $B_r(x)$ denotes the ball with center $x\in\R^2$ and radius $r>0$ and we will write $B_r:= B_r(0)$. 
Let $\mathcal A^+(B_r)$ be the admissible set 
\begin{align*}\mathcal A^+(B_r)=\big\{ u\in H^1(B_r)\,:\, u\ge0\text{ in }B_r\,,\,\  u=0\text{ on }B_r\setminus B_r^+\big\},\end{align*}
where $H$ stands for the upper half-plane $H:=\big\{(x,y)\in\R^2\,:\,y>0\big\}$ and $B_r^+:=B_r\cap H$.
We say that the nonnegative function $u:B_2\to\R$ is a \emph{almost-minimizer of the one-phase functional $J_{\text{\tiny\sc op}}$ 
in the upper half-disk $B_2^+$}, if $u\in \mathcal A^+(B_2)$ and there are constants $r_1>0$, $C_1>0$ and $\q_1>0$ such that, for every $x_0\in B_1\cap\partial\Omega_u$ and $r\in (0,r_1)$, we have
\begin{align}\label{e:almost1}
 J_{\text{\tiny\sc op}}(u,x_0,r)&\le \big(1+C_1r^{\delta_1}\big)J_{\text{\tiny\sc op}}(v,x_0,r)+C_1r^{2+\q_1}\\
& \quad\text{for every}\ v\in \mathcal A^+(B_2)\ \text{such that}\ u=v\ \text{on}\ B_2\setminus B_r(x_0).\notag
\end{align}
We have the following result for the almost-minimizers of the one-phase Alt-Caffarelli functional $J_{\text{\tiny\sc op}}$ constrained in the upper half-disk $B_2^+$.  
\begin{theorem}[Regularity of the constrained one-phase free boundaries]\label{t:constrained}
Let $B_2\subset\R^2$ and $u:B_2\to\R$ be a non-negative and Lipschitz continuous function. If $u$ is a almost-minimizer of the functional $J_{\text{\tiny\sc op}}$ in $\mathcal A^+(B_2)$, then the free boundary $B_1\cap \partial\Omega_u$ is locally the graph of a $C^{1,\alpha}$ function. 
\end{theorem}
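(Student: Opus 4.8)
The strategy is the by now classical variational one: a Weiss-type almost-monotonicity formula combined with the epiperimetric inequality of \cite{spve}, the new point being to run the argument uniformly up to the fixed boundary $\partial H$. As a first reduction, since $A$ is $\delta$-H\"older, at any free boundary point $x_0\in B_1\cap\partial\Omega_u$ a standard affine change of variables (followed by a rotation) normalizes $A(x_0)=\mathrm{Id}$ while mapping the half-plane constraint to itself; the transformed function is again an almost-minimizer, now for a functional whose coefficients are within $O(r^\delta)$ of the constant ones on $B_r(x_0)$, and this discrepancy — together with the $\delta$-H\"older continuity of $Q_{\text{\tiny\sc op}}$ — is absorbed into \eqref{e:almost1} at the cost of possibly decreasing its exponents. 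We also record, as in David--Toro \cite{dt}, that $u$ is non-degenerate at every $x_0\in B_1\cap\partial\Omega_u$, i.e. $\sup_{B_r(x_0)}u\ge c\,r$ and hence $|\Omega_u^+\cap B_r(x_0)|\ge c\,r^2$. Writing $\Lambda=\Lambda_{x_0}:=Q_{\text{\tiny\sc op}}(x_0)$, we distinguish \emph{interior} points $x_0\in H$ from \emph{boundary} points $x_0\in\partial H$, treating both by the same scheme.

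Next I would introduce the Weiss energy
\[
W(u,x_0,r):=\frac1{r^{2}}\int_{B_r(x_0)}\Big(\nabla u\cdot A\nabla u+Q_{\text{\tiny\sc op}}\,\ind_{\{u>0\}}\Big)\,dx-\frac1{r^{3}}\int_{\partial B_r(x_0)}u^2\,d\HH^1,
\]
the exponent $1$ being the expected homogeneity of blow-ups in dimension two. Using \eqref{e:almost1} with the competitor obtained by replacing $u$ inside $B_r(x_0)$ by the one-homogeneous extension of $u|_{\partial B_r(x_0)}$ (which is still admissible), and using the coefficient bounds, one gets that $r\mapsto W(u,x_0,r)+C r^{\gamma}$ is nondecreasing for some $\gamma>0$; in particular $W(u,x_0,0^+)$ exists. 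One also obtains the companion estimate $\tfrac{d}{dr}W(u,x_0,r)\ge \tfrac{c}{r}\int_{\partial B_1}\big|\partial_\rho u_{x_0,r}-u_{x_0,r}\big|^2-C r^{\gamma-1}$, where $u_{x_0,r}(x):=u(x_0+rx)/r$. By the standard compactness for almost-minimizers (using the Lipschitz bound, non-degeneracy and uniform density estimates) the rescalings $u_{x_0,r}$ converge, along subsequences and strongly in $H^1_{\mathrm{loc}}$, to a one-homogeneous global minimizer of the frozen-coefficient one-phase functional, admissible in the sense that it vanishes on $\{y<0\}$ when $x_0\in\partial H$. The two-dimensional classification of such minimizers forces the blow-up to be a half-plane solution $h_\nu:=\sqrt{\Lambda}\,(x\cdot\nu)_+$; when $x_0\in\partial H$ admissibility moreover forces $\nu=(0,1)$, so that the blow-up is $\sqrt{\Lambda}\,y$. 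In every case $W(u,x_0,0^+)=\Theta(x_0):=\tfrac{\pi}{2}Q_{\text{\tiny\sc op}}(x_0)$, which depends continuously on $x_0$.

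Then I would apply the epiperimetric inequality of \cite{spve}, in its interior form at points $x_0\in H$ and — this being the new input — in an up-to-the-boundary form at points $x_0\in\partial H$, with reference profile $\sqrt{\Lambda}\,y$ and competitors constrained to $\mathcal A^+$: there are $\kappa\in(0,1)$ and $\eps>0$ such that whenever the trace $c:=u_{x_0,r}|_{\partial B_1}$ is $\eps$-close to the set of admissible half-plane solutions, its one-homogeneous extension $z$ admits a competitor $w\in\mathcal A^+(B_1)$ with $w=c$ on $\partial B_1$ and $W(w)-\Theta(x_0)\le(1-\kappa)\big(W(z)-\Theta(x_0)\big)$. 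Rescaling this competitor to $B_r(x_0)$, inserting it in \eqref{e:almost1}, and combining with the companion estimate yields, by the usual Gronwall argument, the decay $W(u,x_0,r)-\Theta(x_0)\le C r^{\beta}$ for some $\beta>0$, uniformly for $x_0$ in a neighborhood of any prescribed free boundary point and $r\in(0,r_0)$. Here one first uses the blow-up analysis together with a continuity-in-scale bootstrap to ensure that the closeness hypothesis of the epiperimetric inequality holds at all small scales once it holds at one.

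From the energy decay the conclusion follows by standard arguments. Integrating the companion estimate against the decay gives $\int_0^{r_0}\tfrac1r\|\partial_\rho u_{x_0,r}-u_{x_0,r}\|_{L^2(\partial B_1)}^2\,dr<\infty$ with a rate, hence $u_{x_0,r}\to h_{\nu(x_0)}$ at a power rate and the blow-up direction $\nu(x_0)$ is unique; comparing the rescalings at $x_0$ and $x_1$ at scale $\rho\simeq|x_0-x_1|$ shows $x_0\mapsto\nu(x_0)$ is H\"older continuous; and the power-rate flatness of $\partial\Omega_u$ at each point, together with this H\"older dependence, implies that $B_{\rho_0}(x_0)\cap\partial\Omega_u$ is the graph of a $C^{1,\alpha}$ function over the line through $x_0$ perpendicular to $\nu(x_0)$, with $\alpha$ the H\"older exponent of $\nu$. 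In the boundary case $\nu(x_0)=(0,1)$, so the graph is over a segment of $\partial H$, lies in $\overline H$ by the constraint, and is tangent to $\partial H$ at $x_0$. The step I expect to be the main obstacle is the up-to-the-boundary epiperimetric inequality — establishing it for functions constrained to $\overline H$ with reference profile $\sqrt{\Lambda}\,y$ — and, relatedly, the bootstrap ensuring that closeness to a half-plane solution propagates to all smaller scales; one must also handle with some care the interplay between interior and boundary free-boundary points lying near $\partial H$, and check that every error term coming from the variable coefficients $A,\,Q_{\text{\tiny\sc op}}$ carries a strictly positive power of $r$, so that neither the monotonicity nor the decay is spoiled.
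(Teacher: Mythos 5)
Your overall strategy coincides with the paper's: freeze the coefficients by an affine change of variables, run a Weiss almost-monotonicity formula, classify the blow-ups, apply the two-dimensional epiperimetric inequality to get a power-rate decay of $W-\Theta$, deduce uniqueness of the blow-up with a rate, Hölder continuity of the normal, and conclude that the free boundary is a $C^{1,\alpha}$ graph. However, the step you yourself flag as ``the main obstacle'' --- the epiperimetric inequality up to the fixed boundary --- is precisely the new ingredient of the paper, and you leave it unproved. The paper does not prove a separate ``boundary'' epiperimetric inequality with a constrained reference profile; it observes (Theorem \ref{t:epi1}(b)) that the competitor $h$ of the planar epiperimetric inequality of \cite{spve} can be \emph{chosen} to vanish outside any half-plane $H_{x_0,\nu}$ containing the origin in its closure whenever the one-homogeneous extension $z$ does. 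Since at a point of $\partial\Omega_u\cap\partial H$ the trace of $u_{x_0,r}$ is supported in (the image of) the half-plane, this makes $h$ admissible for the constrained comparison \eqref{e:almost1}, and the interior and boundary cases are then handled by literally the same decay lemma (Lemma \ref{l:decay}). Without this observation, or an equivalent substitute, your argument does not close.

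Two further points. First, your version of the epiperimetric inequality assumes the trace is $\eps$-close to a half-plane solution, which forces you to invoke a ``continuity-in-scale bootstrap'' to propagate closeness to all smaller scales. The inequality of \cite{spve} as used in the paper (Theorem \ref{t:epi1}) requires only the mass lower bound $\int_{\partial B_1}c\,d\HH^1\ge C_0$, which is guaranteed at every scale by non-degeneracy; no bootstrap is needed, and you should not build one into the proof. Second, your claim that at $x_0\in\partial H$ the blow-up is exactly $\sqrt{\Lambda}\,y_+$ is incorrect: at constrained contact points the blow-up is $\mu\,\max\{0,x\cdot A_{x_0}^{-\sfrac12}[\nu]\}$ with $\mu\ge Q_{\text{\tiny\sc op}}^{\sfrac12}(x_0)$ and possibly strict inequality (Proposition \ref{p:classification}, case (OP-c)); this is exactly why the constrained regularity cannot exceed $C^{1,\sfrac12}$. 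The Weiss density is nevertheless $\tfrac{\pi}{2}Q_{\text{\tiny\sc op}}(x_0)$ for every $\mu$, because the Dirichlet and boundary terms of $W_{\text{\sc op}}$ cancel on one-homogeneous half-plane profiles, so the decay argument survives --- but your ``set of admissible half-plane solutions'' in the closeness hypothesis would have to include all slopes $\mu\ge\sqrt{\Lambda}$, and the Hölder continuity of $\mu(\cdot)$ must be proved (as in Lemma \ref{l:oscillation1}) rather than read off from $\mu\equiv\sqrt{\Lambda}$.
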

\begin{remark}
The H\"older continuity of the (exterior) normal vector $n_\Omega$ is the best regularity result that one can expect. Indeed, recently Chang-Lara and Savin \cite{savin} showed that even for minimizers the regularity of the constrained free boundaries cannot exceed $C^{1,\sfrac12}$. Moreover, we notice that the result analogous to Theorem \ref{t:constrained} was proved in any dimension in \cite{savin}, by a viscosity approach,  but only for minimizers of the functional $J_{\text{\tiny\sc op}}$. 
\end{remark}
Analogously, we say that the nonnegative function $u:B_2\to\R$ is a \emph{almost-minimizer of the one-phase functional $J_{\text{\tiny\sc op}}$ in $B_2$}, if $u\in H^1(B_2)$ and there are constants $r_1>0$, $C_1>0$ and $\q_1>0$ such that, for every $x_0\in B_1\cap\partial\Omega_u$ and $r\in (0,r_1)$, we have  
\begin{align}\label{e:almost11}
J_{\text{\tiny\sc op}}(u,x_0,r)&\le \big(1+C_1r^{\delta_1}\big)J_{\text{\tiny\sc op}}(v,x_0,r)+C_1r^{2+\q_1}\\
& \quad\text{for every}\ v\in  H^1(B_2)\ \text{such that}\ u=v\ \text{on}\ B_2\setminus B_r(x_0).\notag
\end{align}
The regularity of the unconstrained one-phase free boundary $\partial\Omega_u$ follows directly by Theorem \ref{t:constrained}. For the sake of completeness, we give the precise statement in Corollary \ref{t:unconstrained} below. 
\begin{corollary}[Regularity of the unconstrained one-phase free boundaries]\label{t:unconstrained}
	Let $B_2\subset\R^2$ and $u:B_2\to\R$ be a non-negative and Lipschitz continuous function. If $u$ is a almost-minimizer of the functional $J_{\text{\tiny\sc op}}$ in $B_2$, then the free boundary $B_1\cap \partial\Omega_u$ is locally the graph of a $C^{1,\alpha}$ function. 
\end{corollary}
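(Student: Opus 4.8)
\emph{Proof proposal.} The plan is to reduce the unconstrained statement to Theorem \ref{t:constrained} by localizing around a free boundary point $x_0\in B_1\cap\partial\Omega_u$, placing a ball inside the zero set $\{u=0\}$, and rescaling so that the resulting function lies in $\mathcal A^+(B_2)$ and inherits the almost-minimality property.

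The first step is the blow-up input. Since $u$ is Lipschitz by assumption and non-degenerate at free boundary points (as established for almost-minimizers by David--Toro and in the body of this paper), and since the Weiss almost-monotonicity formula is available, every blow-up sequence $u_{x_0,r_k}(x):=r_k^{-1}u(x_0+r_kx)$ converges, along a subsequence, locally uniformly and strongly in $H^1_{\mathrm{loc}}$ to a $1$-homogeneous global minimizer $u_0$ of the one-phase functional with the coefficients frozen at $x_0$. In the plane the only such $u_0$ are the half-plane solutions $u_0(x)=\alpha\,(x\cdot\nu)_+$, $\alpha>0$, $\nu\in\mathbb S^1$ (classical planar classification of one-phase minimizing cones, after a linear change of variables reducing $A(x_0)$ to the identity); in particular $\Omega_{u_0}\cap B_1=\{x\cdot\nu>0\}\cap B_1$. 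Using non-degeneracy once more, the positivity sets $\Omega_{u_{x_0,r_k}}$ converge to $\Omega_{u_0}$ in the Hausdorff distance on $\overline{B_1}$, so there is a radius $\rho_0\in(0,1)$ and a unit vector $\nu$ (depending on $x_0$) such that
$$\Omega_u\cap B_{\rho_0}(x_0)\subseteq \big\{x:\ (x-x_0)\cdot\nu>-\tfrac14\rho_0\big\},\qquad\text{i.e.}\qquad u\equiv0\ \text{ on }\ B_{\rho_0}(x_0)\cap\big\{(x-x_0)\cdot\nu\le-\tfrac14\rho_0\big\}.$$

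Next comes the change of variables. Choosing coordinates centered so that $\nu=e_2$, set $z_0:=x_0-\tfrac14\rho_0\,e_2$ and $R:=\tfrac35\rho_0$; then, for $\rho_0$ small (which we may assume), $B_{2R}(z_0)\subseteq B_{\rho_0}(x_0)\subseteq B_2$, one has $x_0\in B_{R/2}(z_0)$, and the lower half-ball $\{(x-z_0)\cdot e_2<0\}\cap B_R(z_0)$ is contained in $\{u=0\}$. Define $w\colon B_2\to\R$ by $w(x):=\tfrac2R\,u\!\left(z_0+\tfrac R2 x\right)$. Then $w$ is non-negative, Lipschitz with constant at most $\mathrm{Lip}(u)$, and $w=0$ on $B_2\setminus B_2^+$, so $w\in\mathcal A^+(B_2)$; moreover the free boundary point $x_0$ corresponds to $\xi_0:=\tfrac2R(x_0-z_0)\in B_1\cap\partial\Omega_w$. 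A direct computation with the similarity $x\mapsto z_0+\tfrac R2 x$ shows that, for $\xi\in B_1\cap\partial\Omega_w$, small $r$, and any competitor $v\in\mathcal A^+(B_2)$ with $w=v$ outside $B_r(\xi)$, the pull-back of $v$ is an admissible competitor for $u$ in the ball of radius $\tfrac R2 r$ about $z_0+\tfrac R2\xi\in B_1\cap\partial\Omega_u$; since $J_{\text{\tiny\sc op}}$ is homogeneous of degree $2$ under this map, the inequality \eqref{e:almost11} for $u$ becomes \eqref{e:almost1} for $w$, with constant $C_1(R/2)^{\delta_1}\le C_1$ and with $A$, $Q_{\text{\tiny\sc op}}$ replaced by the rescaled coefficients $A(z_0+\tfrac R2\cdot)$, $Q_{\text{\tiny\sc op}}(z_0+\tfrac R2\cdot)$, which satisfy the same ellipticity and $\delta$-Hölder bounds. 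Thus $w$ is an almost-minimizer of the one-phase functional in $\mathcal A^+(B_2)$ for admissible coefficients.

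Finally, Theorem \ref{t:constrained} applies to $w$ and yields that $B_1\cap\partial\Omega_w$ is locally the graph of a $C^{1,\alpha}$ function, in particular near $\xi_0$. Pulling back through $x\mapsto z_0+\tfrac R2 x$, which preserves $C^{1,\alpha}$ graphs, we conclude that $B_1\cap\partial\Omega_u$ is locally a $C^{1,\alpha}$ graph near $x_0$; as $x_0\in B_1\cap\partial\Omega_u$ was arbitrary, the corollary follows. The only step that is not a routine coordinate change is the blow-up input of the second paragraph — namely that in dimension two the positivity set of $u$ is, at some definite scale near each free boundary point, confined to a half-ball with a gap comparable to the radius — and this rests on the Lipschitz regularity, the non-degeneracy, the Weiss almost-monotonicity formula, and the classical planar classification of one-phase minimizing cones, all of which are available.
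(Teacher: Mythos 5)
Your argument is correct, and it follows a genuinely different route from the one the paper intends. The paper's ``follows directly'' is shorthand for the observation that the entire chain leading to Theorem~\ref{t:constrained} --- the freezing lemma (Lemma~\ref{l:change_of_variables}), Weiss monotonicity, the epiperimetric inequality, the uniqueness and decay of blow-ups (Proposition~\ref{p:unique}), the flatness estimate (Lemma~\ref{l:flatness1}), and the H\"older oscillation of the normal (Lemma~\ref{l:oscillation1}) --- is stated for all $x_0\in\partial\Omega_u\cap B_1$, and in the unconstrained case every such point is an ``interior'' one: the constraint on competitors is never active, property (b) of Theorem~\ref{t:epi1} is never invoked, and one simply has $\mu(x_0)=Q_{\text{\tiny\sc op}}^{1/2}(x_0)$ throughout. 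So the paper runs the same proof with the constrained case stripped out. You instead treat Theorem~\ref{t:constrained} as a black box and engineer an explicit reduction: you first derive a one-scale flatness at each free boundary point (via subsequential blow-up classification, non-degeneracy, and Hausdorff convergence of the positivity sets), then translate and dilate so that the zero set contains a full lower half-ball of $B_2$, and check that the almost-minimality inequality transforms correctly under the similarity. This works and is a clean way to cite the constrained theorem without reopening its proof. The trade-off is that the flatness you establish up front is exactly what the interior part of the proof of Theorem~\ref{t:constrained} re-derives (with a rate, via Lemma~\ref{l:flatness1}), so no labor is saved; and you implicitly rely on the blow-up machinery (Propositions~\ref{p:convergence}, \ref{p:classification}, Lemma~\ref{prop hom blowup}) holding for unconstrained almost-minimizers, which is true but requires the same observation the paper is making. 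One small arithmetic slip: with $z_0=x_0-\tfrac14\rho_0\,e_2$ and $R=\tfrac35\rho_0$ you write $B_{2R}(z_0)\subseteq B_{\rho_0}(x_0)$, but $2R+\tfrac14\rho_0=\tfrac{29}{20}\rho_0>\rho_0$; what you actually need (and what does hold, since $R+\tfrac14\rho_0=\tfrac{17}{20}\rho_0<\rho_0$) is $B_R(z_0)\subseteq B_{\rho_0}(x_0)$, which suffices for $w$ to vanish on $B_2\setminus B_2^+$ and for the competitors to pull back legally.
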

\begin{remark}
We notice that the regularity of the free boundaries of the one-phase (unconstrained) almost-minimizers is already proved by David, Engelstein and Toro in \cite{det} in every dimension and by a different approach.  
\end{remark}
\noindent{\bf The two-phase free boundaries.} 
For every $u\in H^1(B_2)$, $x_0 \in B_1$ and $r \in (0,1)$, we define the two-phase functional 
$$J_{\text{\tiny\sc tp}}(u,x_0,r)=\int_{B_r(x_0)}\Big(\sum_{i,j}a_{ij}(x)\frac{\partial u}{\partial x_i}\frac{\partial u}{\partial x_j}+Q_{\text{\tiny\sc tp}}^+(x)\ind_{\{u>0\}}+Q_{\text{\tiny\sc tp}}^-(x)\ind_{\{u<0\}}\Big)\,dx.$$
We say that the function $u\in H^1(B_2)$ is a \emph{almost-minimizer of the two-phase functional $J_{\text{\tiny\sc tp}}$ in $B_2$}, if there are constants $r_2>0$, $C_2>0$ and $\delta_2>0$ such that, for every $x_0\in B_1\cap\partial\Omega_u$ and $r\in (0,r_2)$, we have  
\begin{align}\label{e:almost2}
 J_{\text{\tiny\sc tp}}(u,x_0,r)&\le \big(1+C_2r^{\delta_2}\big)J_{\text{\tiny\sc tp}}(v,x_0,r)+C_2r^{2+\q_2}\\
 & \quad\text{for every}\ v\in  H^1(B_2)\ \text{such that}\ u=v\ \text{on}\ B_2\setminus B_r(x_0).\notag
\end{align}
Then, we have the following result: 
\begin{theorem}[Regularity of the two-phase free boundaries]\label{t:two-phase}
Let $B_2\subset\R^2$ and let $u:B_2\to\R$ be Lipschitz continuous such that the functions $u_\pm$ are solutions of the PDEs
$$-\text{\rm div}\,(A\nabla u_\pm)=f_\pm\quad\text{in}\quad\Omega_u^{\pm}\cap B_2,$$
where 
$f_\pm:\overline{\Omega}_u^{\pm}\to\R$ are H\"older continuous functions and $A$ is fixed as above. If $u$ is a almost-minimizer of the two-phase functional $J_{\text{\tiny\sc tp}}$ in $B_2$, then the free boundaries $B_1\cap \partial\Omega_u^+$ and $B_1\cap \partial\Omega_u^-$ are locally graphs of $C^{1,\alpha}$ functions. 
\end{theorem}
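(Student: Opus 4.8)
The plan is to reduce Theorem~\ref{t:two-phase} to the one-phase result, Theorem~\ref{t:constrained}, by showing that near each point of the free boundary the two-phase problem decouples into two constrained one-phase problems posed on complementary domains. First I would classify the free boundary points $x_0 \in B_1 \cap \partial\Omega_u$ into two types: \emph{two-phase points}, where $x_0 \in \partial\Omega_u^+ \cap \partial\Omega_u^-$, and \emph{one-phase points}, where $x_0$ lies in the boundary of only one of the two phases (say $\partial\Omega_u^+ \setminus \overline{\Omega_u^-}$). At a one-phase point, in a small ball $B_\rho(x_0)$ the negative part $u_-$ vanishes identically, so $u = u_+ \ge 0$ there and $J_{\text{\tiny\sc tp}}(\cdot,x_0,r)$ coincides (up to the additive constant $Q_{\text{\tiny\sc tp}}^- \ind_{\{u<0\}}$, which is zero for admissible competitors supported in this ball once we observe competitors need not change sign) with $J_{\text{\tiny\sc op}}$ for the weight $Q_{\text{\tiny\sc op}} := Q_{\text{\tiny\sc tp}}^+$; hence $u$ is an (unconstrained) almost-minimizer of a one-phase functional and Corollary~\ref{t:unconstrained} applies, giving $C^{1,\alpha}$ regularity of $\partial\Omega_u^+$ near $x_0$.

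The heart of the matter is the two-phase points. Here the key structural fact, which should follow from the Lipschitz continuity of $u$ together with the non-degeneracy of almost-minimizers (in the spirit of David--Toro \cite{dt} and David--Engelstein--Toro \cite{det}), is that the two phases cannot be separated: the common free boundary $\partial\Omega_u^+ = \partial\Omega_u^-$ near $x_0$, and $\Omega_u^+$ and $\Omega_u^-$ fill up a full neighborhood of $x_0$ up to a Lebesgue-null set. Granting this, I would prove that $u_+$, extended by $0$ into $\Omega_u^-$, is an almost-minimizer of the \emph{constrained} one-phase functional $J_{\text{\tiny\sc op}}$ with weight $Q_{\text{\tiny\sc tp}}^+$ on one side of the (a priori only Lipschitz) interface, and symmetrically for $u_-$. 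The almost-minimality of $u_+$ as a constrained one-phase minimizer is obtained by testing the two-phase almost-minimality \eqref{e:almost2} of $u$ against competitors of the form $v = w - u_-$, where $w \ge 0$ agrees with $u_+$ outside $B_r(x_0)$: since $w$ and $u_-$ have disjoint supports (here one uses the PDE $-\operatorname{div}(A\nabla u_-) = f_-$ to control $u_-$ and the measure-theoretic fact that $\{u_-=0\}$ has the right density), one gets $J_{\text{\tiny\sc tp}}(v,x_0,r) = J_{\text{\tiny\sc op}}(w,x_0,r) + \int_{B_r(x_0)}(\sum a_{ij}\partial_i u_- \partial_j u_- + Q_{\text{\tiny\sc tp}}^-\ind_{\{u<0\}})$, and the $u_-$ contribution cancels the corresponding piece of $J_{\text{\tiny\sc tp}}(u,x_0,r)$, yielding precisely \eqref{e:almost11} for $w$, possibly with a modified H\"older exponent absorbing the error terms. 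Applying Theorem~\ref{t:constrained} (in the form where the fixed boundary is a Lipschitz graph rather than a hyperplane — which is the version the proof in the excerpt actually establishes via the epiperimetric inequality up to the boundary) then gives that $B_\rho(x_0) \cap \partial\Omega_u^+$ is a $C^{1,\alpha}$ graph, and by symmetry the same for $\partial\Omega_u^-$.

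The main obstacle I anticipate is the decoupling step at two-phase points, and in particular justifying that the ``fixed boundary'' seen by the one-phase problem for $u_+$ is regular enough to feed into Theorem~\ref{t:constrained}. There is a genuine circularity to break: the interface is both the free boundary of $u_+$ and (part of) the constraint boundary for $u_+$. The way to resolve this is a bootstrap: one first shows by a clean-up/density argument that the two phases meet along a set that is porous and Ahlfors-regular, then runs a Weiss-type almost-monotonicity formula for the two-phase Weiss energy to show that blow-ups at two-phase points are two-plane solutions $\alpha x_2^+ - \beta x_2^-$ (with $\alpha,\beta>0$ linked through the weights $Q_{\text{\tiny\sc tp}}^\pm$), which in turn gives that the interface has a tangent plane at every point; the epiperimetric inequality of \cite{spve}, applied up to this tangent, then upgrades flatness to a $C^{1,\alpha}$ rate. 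A secondary technical point is that $A$ is merely $\delta$-H\"older and the right-hand sides $f_\pm$ are only H\"older, so all monotonicity formulas are only \emph{almost}-monotone; the error terms are of order $r^{\delta'}$ for a suitable $\delta' = \delta'(\delta,\delta_2,\q_2) > 0$ and must be tracked carefully so that they do not destroy the geometric decay, but this is by now standard once the Weiss formula is set up. Finally, one must check that there are no other free boundary points — i.e. that $B_1\cap\partial\Omega_u = (B_1\cap\partial\Omega_u^+) \cup (B_1\cap\partial\Omega_u^-)$ — which is immediate from the definitions since $\Omega_u = \Omega_u^+ \cup \Omega_u^-$.
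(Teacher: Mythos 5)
Your proposal takes a genuinely different route from the paper, and the route has a gap at precisely the point where the paper introduces its main new idea.

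The paper does \emph{not} decouple the two-phase problem into two constrained one-phase problems. Instead it runs the epiperimetric inequality for the \emph{two-phase} Weiss energy $W_{\text{\tiny\sc tp}}$ (Theorem~\ref{t:epi2}) directly through Lemma~\ref{l:decay}, which yields uniqueness of the blow-up at every two-phase point and the H\"older decay (Proposition~\ref{p:unique}); from this it extracts flatness (Lemma~\ref{l:flatness2}), H\"older oscillation of the normal and of $\mu_\pm$ along $\Gamma_{\text{\tiny\sc tp}}$ (Lemma~\ref{l:oscillation2}), and pointwise differentiability of $u_+$ up to $\partial\Omega_u^+$ at two-phase points (Lemma~\ref{l:differentiability}). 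The crucial remaining step is Lemma~\ref{l:holder}, which proves that the free-boundary gradient $\mu_+$ is H\"older continuous \emph{across the transition} between $\Gamma_+$ and $\Gamma_{\text{\tiny\sc tp}}$, so that $u_+$ becomes a viscosity solution of \eqref{e:viscosity} with H\"older boundary datum $g=\mu_+$; the theorem is then concluded by De Silva's improvement-of-flatness result (Theorem~\ref{t:desilva}). There is no reduction to Theorem~\ref{t:constrained} anywhere in this argument.

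Your proposed reduction to the constrained one-phase problem has two concrete problems. First, Theorem~\ref{t:constrained} is stated and proved for the constraint set $H$ a half-plane (and is applied in Section~\ref{s:multi} only after flattening a $C^2$ boundary); it is not proved for a merely Lipschitz constraint boundary. Since the ``constraint'' seen by $u_+$ is $\overline{\Omega_u^-}$, whose regularity is the thing being proved, the circularity is real. Your sketch of how to break it — porosity, Weiss almost-monotonicity, classification of blow-ups, epiperimetric inequality to upgrade flatness — is in effect a re-derivation of the two-phase analysis of Sections~\ref{s:blow-up}--\ref{s:two-phase} under another name, and at that point invoking Theorem~\ref{t:constrained} buys nothing: the blow-ups at $\Gamma_{\text{\tiny\sc tp}}$ are two-plane profiles $\mu_+(x\cdot\nu)_+ - \mu_-(x\cdot\nu)_-$, not half-plane profiles, and a constrained one-phase framework does not produce the compatibility $\mu_+^2-\mu_-^2 = Q_{\text{\tiny\sc tp}}^+-Q_{\text{\tiny\sc tp}}^-$ that is needed. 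Second, and more importantly, the last sentence of your plan — that gluing the regularity obtained near one-phase points and near two-phase points ``is immediate from the definitions'' — is exactly where the difficulty lies. As Remark~\ref{r:1-2} points out, the decay estimate \eqref{e:decay1} at one-phase points $x_0\in\Gamma_+$ is \emph{not} uniform as $x_0$ approaches $\Gamma_{\text{\tiny\sc tp}}$: it only holds for $r<\frac12\mathrm{dist}(x_0,\Omega_u^-)$. So the local $C^{1,\alpha}$ charts you get on $\Gamma_+$ from Corollary~\ref{t:unconstrained} degenerate near $\Gamma_{\text{\tiny\sc tp}}$, and ``the free boundary is a $C^{1,\alpha}$ curve plus a $C^{1,\alpha}$ curve'' does not give ``the free boundary is a $C^{1,\alpha}$ curve.'' The purpose of Lemma~\ref{l:holder} and of the passage to a viscosity formulation is precisely to obtain uniform information across the transition. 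Your proof has no analogue of this step and therefore does not close.
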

\begin{remark}
In particular, we improve from $C^1$ to $C^{1,\alpha}$ the regularity of the free boundaries proved by Alt, Caffarelli and Friedman \cite{alcafr} for minimizers in the case $A=Id$.
\end{remark}

\begin{remark}[Remark on the Lipschitz continuity]
	In Theorem \ref{t:unconstrained} and Theorem \ref{t:two-phase} we assume that the function $u$ is Lipschitz continuous. In the case of the Laplacian, David and Toro \cite{dt} proved that the Lipschitz continuity is a consequence of the the almost-minimality condition. It is of course natural to expect that the same will hold when the operator involved has variable coefficients. We will not address this question in the present paper since our main motivation comes from the application to shape optimization problems as \eqref{e:multi}, for which the Lipschitz continuity is often already known. Actually, in the case of \eqref{e:multi}, the Lipschitz continuity of the eigenfunctions is used to deduce the almost-minimality (see Section \ref{s:multi}).
\end{remark}



\subsection{Multiphase shape optimization problem for the first eigenvalue}
As a consequence of Theorem \ref{t:constrained} and Theorem \ref{t:two-phase}, we get the complete regularity of the following multiphase shape optimization problem 
\begin{equation}\label{e:multi}
\min\Big\{\sum_{i=1}^n\big(\lambda_1(\Omega_i)+q_i|\Omega_i|\big)\ :\ 
\Omega_1,\dots,\Omega_n\ \text{ are disjoint open subsets of }\ \mathcal D\Big\}\,,
\end{equation}
where, we will use the following notations:  

$\quad\bullet$ $1\le n\in\N$, and $0<q_i\in\R$, for every $i=1,\dots,n$;

$\quad\bullet$ $\mathcal D\subset\R^2$ is a bounded open planar set with $C^2$ regular boundary;

$\quad\bullet$ $|\Omega|$ denotes the Lebesgue measure of $\Omega$;

$\quad\bullet$ $\lambda_1(\Omega)$ is the first eigenvalue of the Dirichlet Laplacian on $\Omega$.

\begin{theorem}\label{t:multi}
Let $(\Omega_1,\dots,\Omega_n)$ be a solution of \eqref{e:multi}. Then, all the sets $\Omega_i$, $i=1,\dots,n$, are bounded open sets with $C^{1,\alpha}$ regular boundary. 
\end{theorem}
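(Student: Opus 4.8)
The plan is to reduce Theorem \ref{t:multi} to the free-boundary regularity results for almost-minimizers, namely Theorem \ref{t:constrained} and Theorem \ref{t:two-phase}, exactly as advertised in the introduction. First I would recall from \cite{benve} the known structural facts about a solution $(\Omega_1,\dots,\Omega_n)$ of \eqref{e:multi}: each $\Omega_i$ is (quasi-)open and of finite measure, the first eigenfunction $u_i$ on $\Omega_i$ (normalized, say, in $L^2$ and extended by zero to $\mathcal D$) is nonnegative and Lipschitz continuous on $\mathcal D$, and there are no triple points, i.e.\ no point of $\mathcal D$ belongs to the closure of three distinct $\Omega_i$'s. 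I would also use that $u_i$ solves $-\Delta u_i = \lambda_1(\Omega_i)u_i$ in $\Omega_i$, so in particular $u_i$ is a solution of an equation $-\mathrm{div}(A\nabla u_i)=f_i$ with $A=\mathrm{Id}$ and $f_i=\lambda_1(\Omega_i)u_i$ Hölder continuous on $\overline{\Omega_i}$ (indeed Lipschitz). Up to the boundary $\partial\mathcal D$, the $C^2$ regularity of $\partial\mathcal D$ lets me flatten the boundary locally by a $C^2$ diffeomorphism, which turns the Laplacian into a divergence-form operator $-\mathrm{div}(A\nabla\cdot)$ with a Hölder (in fact Lipschitz) matrix $A$ satisfying the ellipticity and Hölder hypotheses imposed in the paper; this is precisely why the variable-coefficient versions of the theorems were proved.

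The core step is to show that each $u_i$, or rather the relevant combination of the $u_i$'s, is an almost-minimizer of the appropriate functional near each boundary point. I would split $\partial\Omega_i$ into three types of points and treat them separately. (i) At a \emph{one-phase interior point} $x_0\in\mathcal D$, where near $x_0$ only $\Omega_i$ is present (all other $\Omega_j$ stay away), the optimality of $\Omega_i$ against competitors obtained by perturbing $\Omega_i$ inside a small ball — keeping the other phases fixed — combined with the variational characterization $\lambda_1(\Omega_i)=\int|\nabla u_i|^2\big/\int u_i^2$ and the Lipschitz bound on $u_i$, yields an inequality of the form \eqref{e:almost11} for $u_i$ with exponents coming from the Lipschitz continuity; the $q_i|\Omega_i|$ term produces the $Q_{\text{\tiny\sc op}}$ term (here $Q_{\text{\tiny\sc op}}=q_i$ is constant) and the lower-order eigenvalue term produces the $C_1 r^{2+\delta_1}$ error, since replacing $\lambda_1(\Omega_i)\int u_i^2$ by the Rayleigh quotient of the competitor costs at most $O(r^{2+\delta_1})$ by standard estimates on how the eigenvalue and $L^2$ norm change under a localized modification. (ii) At a \emph{two-phase point} $x_0\in\mathcal D$, where $x_0\in\partial\Omega_i\cap\partial\Omega_j$ for exactly two indices (the absence of triple points guarantees there are at most two), I form $u:=u_i-u_j$, which is Lipschitz, satisfies $-\mathrm{div}(A\nabla u_\pm)=f_\pm$ in $\Omega_u^\pm$ with the Hölder right-hand sides $f_\pm$ as required, and I verify \eqref{e:almost2} by the same competitor argument — now a joint perturbation of $\Omega_i$ and $\Omega_j$ — so that $u$ is an almost-minimizer of $J_{\text{\tiny\sc tp}}$ with $Q_{\text{\tiny\sc tp}}^\pm$ constant. (iii) At a \emph{fixed-boundary point} $x_0\in\partial\mathcal D$, after the flattening diffeomorphism the same analysis applies in the half-disk with the constraint $u=0$ outside $B_r^+$, giving the hypothesis of Theorem \ref{t:constrained} (and, at a fixed-boundary point that is also a two-phase point, a half-disk version of the two-phase statement; again no triple points means at most two phases meet $\partial\mathcal D$ there).

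Once these almost-minimality properties are established, I apply Theorem \ref{t:constrained}, Corollary \ref{t:unconstrained}, and Theorem \ref{t:two-phase} (together with their half-space/boundary versions) to conclude that $B_\rho(x_0)\cap\partial\Omega_i$ is locally a $C^{1,\alpha}$ graph near every $x_0\in\overline{\mathcal D}$; patching these local graphs and pulling back through the (at worst $C^2$, hence harmless) flattening diffeomorphisms gives that the whole of $\partial\Omega_i$ is $C^{1,\alpha}$. Boundedness of each $\Omega_i$ is immediate since $\Omega_i\subset\mathcal D$ and $\mathcal D$ is bounded, and openness is part of the already-quoted regularity of the $\Omega_i$ (a $C^{1,\alpha}$ boundary set that equals $\{u_i>0\}$ up to a negligible set is genuinely open).

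I expect the main obstacle to be step (i)--(iii): carefully deriving the almost-minimality inequalities with the correct error exponent $r^{2+\delta_1}$, because the functional \eqref{e:multi} is \emph{nonlocal} through the eigenvalue $\lambda_1$, whereas \eqref{e:almost1}--\eqref{e:almost2} are local. The point is that a localized competitor changes $\lambda_1(\Omega_i)$ by an amount controlled by the Dirichlet energy it saves/spends in $B_r(x_0)$ plus an $L^2$-type error, and one must show that, after reabsorbing, the net effect is an inequality of the stated local form with an admissible lower-order term; this bookkeeping — comparing Rayleigh quotients, using the Lipschitz bound $|u_i|\le L|x-x_0|$ near the free boundary to estimate $\int_{B_r(x_0)}u_i^2\lesssim r^{4}$ in the one-phase case and $\lesssim r^2$ generally, and checking the triple-point exclusion uniformly so that the "two-phase" reduction is always available — is the technical heart of the argument. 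The flattening of $\partial\mathcal D$ and the patching are comparatively routine given the $C^2$ hypothesis on $\mathcal D$ and the fact that the theorems already allow variable Hölder coefficients.
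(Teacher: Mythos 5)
Your overall strategy matches the paper's: recall from \cite{benve} that the eigenfunctions are Lipschitz and there are no triple points, decompose $\partial\Omega_i$ into one-phase interior points, two-phase interior points, and fixed-boundary points, verify almost-minimality of $u_i$ (or $u_i-u_j$) for the localized functionals with error $O(r^{d+2})$ via the renormalization of competitors in the Rayleigh quotient, flatten $\partial\mathcal D$ to produce variable Hölder coefficients, and then invoke Theorem \ref{t:constrained} and Theorem \ref{t:two-phase}. Your discussion of why the error exponent works out (Lipschitz bound gives $\int_{B_r(x_0)}u_i^2\lesssim r^{d+2}$ near the free boundary) is exactly the mechanism of the paper's Lemma \ref{l:almost_almost_u}.

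There is one genuine gap. In your step (iii) you allow for ``a fixed-boundary point that is also a two-phase point'' and propose to handle it with ``a half-disk version of the two-phase statement.'' No such theorem exists in the paper, and proving one would require a constrained two-phase epiperimetric inequality that is not available (the epiperimetric inequality of \cite{spve} in Theorem \ref{t:epi2} is stated only in the unconstrained setting). The way out — and the step your proposal is missing — is that this case is vacuous: \cite{benve} also proves that for every $1\le i<j\le n$ one has $\partial\Omega_i\cap\partial\Omega_j\cap\partial\mathcal D=\emptyset$, i.e.\ there are no two-phase points on the boundary of the box. This is a distinct structural fact from the absence of interior triple points; merely knowing that ``at most two phases meet there'' does not save you, because two phases meeting $\partial\mathcal D$ at a point would still be a constrained two-phase configuration you cannot handle. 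The paper explicitly lists this absence-of-boundary-two-phase-points property among its preliminaries precisely so that case (iii) reduces entirely to the constrained one-phase Theorem \ref{t:constrained}. Once you invoke that fact, your argument closes.
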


We notice that, in the above theorem, the {\it entire} boundary $\partial\Omega_i$, $i=1,\dots,n$, is regular. In particular, this holds at the contact points with the other phases $\Omega_j$, $j\neq i$, and also with the boundary of the box $\partial\mathcal D$. 

\begin{remark}[On the regularity of the box]
We state the regularity result of Theorem \ref{t:multi} (and of the corollary below) with a box $D$ of class $C^2$. However, it is possible to weaken this assumption, with exactly the same proof, by assuming that $D$ is a bounded open set of class $C^{1,\alpha}$ such that the solution $w$ to the PDE
\[ -\Delta w = 1 \text{ in } D, \qquad w \in H^1_0(D), \]
is a Lipschitz continuous function in $\R^d$. It is for instance the case if $D$ is $C^{1,1}$ regular.
\end{remark}

Moreover, note that, in the special case $n=1$, \eqref{e:multi} reduces to the classical shape optimization problem 
\begin{equation}\label{eq min lambda meas}
\min \big\{ \lambda_1(\O) + \Lambda |\O| \ : \ \O  \text{ open},\ \Omega\subset \mathcal D\big\}.
\end{equation}
The existence in the class of open sets and the regularity of the free boundary (precisely, of the part contained in the open set $\mathcal D$) was proved by Brian\c con and Lamboley in \cite{brla}. As a direct corollary of our Theorem \ref{t:multi}, we obtain that the entire  boundary is $C^{1,\alpha}$ regular. 

\begin{corollary}[Regularity of the optimal sets for the first eigenvalue]\label{th main th}
Let $\mathcal D \subset \R^2$ be a bounded open set of class $C^2$ and let $\Lambda>0$. Then, there is $\alpha\in(0,1)$ such that every solution $\Omega\subset\mathcal D$ of \eqref{eq min lambda meas} is $C^{1,\alpha}$ regular. 
\end{corollary}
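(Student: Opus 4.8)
The plan is to recognize Corollary~\ref{th main th} as the special case $n=1$ of Theorem~\ref{t:multi}. First I would observe that, taking $n=1$ and $q_1=\Lambda$ in \eqref{e:multi}, the sum $\sum_{i=1}^n(\lambda_1(\Omega_i)+q_i|\Omega_i|)$ becomes $\lambda_1(\Omega_1)+\Lambda|\Omega_1|$, and the constraint that $\Omega_1,\dots,\Omega_n$ be \emph{disjoint} open subsets of $\mathcal D$ becomes simply the requirement that $\Omega_1$ be an open subset of $\mathcal D$ (disjointness is vacuous with a single set). Hence \eqref{e:multi} reduces verbatim to \eqref{eq min lambda meas} with $\Lambda=q_1$, and in particular any solution $\Omega\subset\mathcal D$ of \eqref{eq min lambda meas} is a solution of \eqref{e:multi} in the sense of that theorem.

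Then I would simply invoke Theorem~\ref{t:multi}, applied with $n=1$ and $q_1=\Lambda>0$, which yields that $\Omega_1=\Omega$ is a bounded open set with $C^{1,\alpha}$ regular boundary for some $\alpha\in(0,1)$ (the exponent being the one furnished by Theorem~\ref{t:multi}, ultimately coming from the epiperimetric inequality and the Hölder/ellipticity data). Boundedness is in any case automatic since $\Omega\subset\mathcal D$ and $\mathcal D$ is bounded, so the substantive content is entirely the $C^{1,\alpha}$ regularity of $\partial\Omega$, including at the contact points with $\partial\mathcal D$.

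There is essentially no obstacle here: the only point deserving a line of justification is that the two admissible classes literally coincide when $n=1$, and that — with no second phase present — there are no inter-phase contact points, so only the one-phase part of the machinery behind Theorem~\ref{t:multi} (namely Theorem~\ref{t:constrained} near $\partial\mathcal D$, together with its interior counterpart Corollary~\ref{t:unconstrained}) is actually exercised; the two-phase Theorem~\ref{t:two-phase} plays no role for $n=1$. For completeness one may recall that the existence of an optimal open set for \eqref{eq min lambda meas} was established by Briançon and Lamboley, so that the statement is not vacuous; but existence is not needed for the regularity conclusion itself.
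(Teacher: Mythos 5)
Your proposal is correct and is exactly the paper's argument: the paper itself presents Corollary~\ref{th main th} as the direct specialization of Theorem~\ref{t:multi} to $n=1$ with $q_1=\Lambda$, under which \eqref{e:multi} reduces verbatim to \eqref{eq min lambda meas}. Your additional remarks (that only the one-phase machinery, i.e.\ Theorem~\ref{t:constrained} and Corollary~\ref{t:unconstrained}, is actually exercised when $n=1$, and that existence is due to Brian\c con--Lamboley) are accurate and consistent with the paper's discussion.
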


\subsection{Organization of the paper}
In Section \ref{s:epi} we recall the definitions of the Weiss' boundary adjusted energies and the statements of the epiperimetric inequalities. Moreover, we show how to use the algebraic properties of these quantities to deduce the rate of convergence of the blow-up sequences and the uniqueness of the blow-up limits. In Section \ref{s:freezing} we prove a technical lemma that reduces the one-phase and two-phase problems to the case of the Laplacian, which allows us to apply the results of Section \ref{s:epi}. Section \ref{s:blow-up} is dedicated to the classification of the blow-up limits for the one-phase and the two-phase problems. In Section \ref{s:constrained} and Section \ref{s:two-phase} we prove Theorem \ref{t:constrained} and Theorem \ref{t:two-phase}, respectively. In Section \ref{s:multi} we prove that the (eigenfunctions associated to the) solutions of the multiphase problem \eqref{e:multi} are locally almost-minimizers of the one-phase or the two-phase problems, and we prove Theorem \ref{t:multi}.    


\section{Boundary adjusted energy and epiperimetric inequality}\label{s:epi}
All the arguments in this section hold in every dimension $d\ge 2$, except the epiperimetric inequalities Theorem \ref{t:epi1} and Theorem \ref{t:epi2}, which are known only in dimension two. 
\subsection{One-homogeneous rescaling and excess} 
Let $d\ge 2$ and $u \in H^1_{loc}(\R^d)$. For $r>0$ and $x_0 \in \R^d$, we define the one-homogeneous rescaling of $u$ as
\begin{equation}\label{e:resc}
u_{x_0,r}(x):=\frac{u(x_0+rx)}{r}\quad\text{for every}\quad x\in\R^d.
\end{equation}
Then, $u_{x_0,r} \in H^1_{loc}(\R^d)$ and for almost every $r>0$, $E(u_{x_0,r})$ is well defined, where we set
\begin{equation}\label{e:excess}
E(v):=\int_{\partial B_1}|x\cdot \nabla v-v|^2\,d\HH^{d-1},
\end{equation}
where $x\in\partial B_1$ is the exterior normal derivative to $\partial B_1$ at the point $x\in\R^d$ and $\mathcal H^{d-1}$ stands for the $(d-1)$-dimensional Hausdorff measure. The excess function $e(r)=E(u_{x_0,r})$ controls the asymptotic behavior, as $r\to0^+$, of the one parameter family $u_{x_0,r}\in L^2(\partial B_1)$. Precisely, we have the following elementary estimate. 
\begin{lemma}\label{l:excess}
Let $u\in H^1_{loc}(\R^d)$ and $x_0\in\R^d$. Suppose that there are constants $r_0>0$, $\gamma\in(0,1)$ and $I>0$ such that 
\begin{equation}\label{e:intE}
\int_0^{r_0} \frac{E(u_{x_0,r})}{r^{1+\gamma}}\, dr\le I.
\end{equation}
Then, there is a unique function $u_{x_0}\in L^2(\partial B_1)$ such that 
$$\|u_{r,x_0}-u_{x_0}\|_{L^2(\partial B_1)}^2\le \gamma^{-1} I\, r^{\gamma}\quad\text{for every}\quad r\in(0,r_0).$$
\end{lemma}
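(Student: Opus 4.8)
The plan is to show that the map $r \mapsto u_{x_0,r}$ is Cauchy in $L^2(\partial B_1)$ as $r \to 0^+$, using the hypothesis \eqref{e:intE} to control the speed of variation. First I would compute the derivative of $r \mapsto u_{x_0,r}(x)$ in $r$. From the definition \eqref{e:resc}, a direct differentiation gives
\[
\frac{d}{dr}\,u_{x_0,r}(x) = \frac{x\cdot(\nabla u)(x_0+rx)}{r} - \frac{u(x_0+rx)}{r^2} = \frac{1}{r}\big(x\cdot\nabla u_{x_0,r}(x) - u_{x_0,r}(x)\big),
\]
so that, taking $L^2(\partial B_1)$ norms and recalling the definition \eqref{e:excess} of $E$,
\[
\Big\|\frac{d}{dr}\,u_{x_0,r}\Big\|_{L^2(\partial B_1)} = \frac{1}{r}\,E(u_{x_0,r})^{1/2} = \frac{1}{r}\,e(r)^{1/2}.
\]
This identity is the crux: it converts the integral bound on $E$ into a bound on the total variation of the curve $r\mapsto u_{x_0,r}$ near $0$.

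Next I would estimate, for $0 < s < t < r_0$, the difference $\|u_{x_0,t} - u_{x_0,s}\|_{L^2(\partial B_1)}$ by integrating the derivative and applying the triangle inequality in $L^2$ together with the Cauchy–Schwarz inequality:
\[
\|u_{x_0,t} - u_{x_0,s}\|_{L^2(\partial B_1)} \le \int_s^t \frac{e(r)^{1/2}}{r}\,dr
= \int_s^t \frac{e(r)^{1/2}}{r^{(1+\gamma)/2}}\cdot\frac{dr}{r^{(1-\gamma)/2}}
\le \Big(\int_s^t \frac{e(r)}{r^{1+\gamma}}\,dr\Big)^{1/2}\Big(\int_s^t \frac{dr}{r^{1-\gamma}}\Big)^{1/2}.
\]
The first factor is bounded by $I^{1/2}$ by \eqref{e:intE}, and the second factor equals $\big(\tfrac{1}{\gamma}(t^\gamma - s^\gamma)\big)^{1/2} \le (\gamma^{-1} t^\gamma)^{1/2}$. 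Hence $\|u_{x_0,t}-u_{x_0,s}\|_{L^2(\partial B_1)}^2 \le \gamma^{-1} I\, t^\gamma$, which shows the curve is Cauchy as $s,t \to 0^+$; let $u_{x_0} \in L^2(\partial B_1)$ be its limit. Letting $s \to 0^+$ with $t = r$ fixed yields exactly the claimed estimate $\|u_{x_0,r} - u_{x_0}\|_{L^2(\partial B_1)}^2 \le \gamma^{-1} I\, r^\gamma$ for every $r \in (0,r_0)$; uniqueness of $u_{x_0}$ is automatic as the $L^2$-limit of a convergent sequence.

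The one point requiring a little care — and the main (mild) obstacle — is justifying the manipulations rigorously given that $u$ is only $H^1_{loc}$, so that $u_{x_0,r} \in L^2(\partial B_1)$ and $E(u_{x_0,r})$ are only defined for a.e.\ $r$ (as already noted in the text). To handle this I would work with the absolutely continuous representative: the function $r \mapsto u_{x_0,r} \in L^2(\partial B_1)$ is, after a standard Fubini/trace argument, absolutely continuous on compact subintervals of $(0,r_0)$ with the weak derivative computed above, so the fundamental theorem of calculus in the Bochner sense applies on $[s,t] \subset (0,r_0)$. One may alternatively avoid Bochner integration entirely by first proving the estimate for a.e.\ pair $s < t$ via a slicing argument in the $(r,x)$ variables and then extending by density/continuity; either way the estimate \eqref{e:intE} guarantees $e(r)/r \in L^1_{loc}(0,r_0)$ in an integrated sense, which is all that is needed. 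No further input is required.
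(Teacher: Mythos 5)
Your proposal is correct and follows essentially the same route as the paper: both integrate the radial derivative $\frac{d}{dr}u_{x_0,r}=\frac1r\big(x\cdot\nabla u_{x_0,r}-u_{x_0,r}\big)$ along rays and apply the Cauchy--Schwarz inequality with the splitting $s^{(\gamma-1)/2}\cdot s^{-(1+\gamma)/2}$ to obtain the Cauchy estimate $\|u_{x_0,t}-u_{x_0,s}\|_{L^2(\partial B_1)}^2\le \gamma^{-1}I\,t^{\gamma}$. The only cosmetic difference is that you apply Minkowski's integral inequality in $L^2(\partial B_1)$ before Cauchy--Schwarz, whereas the paper applies Cauchy--Schwarz pointwise in $x$ and then integrates over the sphere; your remarks on the a.e.\ definedness of the trace are a welcome (and correct) addition to what the paper dismisses as ``a standard argument.''
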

\begin{proof} 
We set for simplicity, $x_0=0$ and $u_r:=u_{x_0,r}$. Let $0<r<R\le r_0$. 
Notice that, for any $x\in\partial B_1$, we have 
$$\frac{u(Rx)}R-\frac{u(rx)}r=\int_r^R\left(\frac{x\cdot (\nabla u)(sx)}{s}-\frac{u(sx)}{s^2}\right)\,ds=\frac1s\int_r^R\big(x\cdot \nabla u_s(x)-u_s(x)\big)\,ds.$$
Thus, by the Cauchy-Schwarz inequality, we obtain
\begin{align*}
\int_{\partial B_1} |u_R - u_r|^2 \,d\mathcal{H}^{d-1} &\leq \int_{\partial B_1} \left(\int^R_r \frac{1}{s} |x \cdot \nabla u_s - u_s |\,ds \right)^2 \,d\mathcal{H}^{d-1} \\
& \leq \int_{\partial B_1}  \left(\int^R_r s^{\gamma-1}ds\right) \left( \int^R_r \frac{1}{s^{1+\gamma}} |x \cdot \nabla u_s - u_s |^2 ds \right) \,d\mathcal{H}^{d-1} \\
&\leq \frac{R^\gamma-r^\gamma}{\gamma}\int^R_r \frac{E(u_s)}{s^{1+\gamma}}\, ds\leq \frac{R^\gamma}{\gamma}\int^{r_0}_0 \frac{E(u_s)}{s^{1+\gamma}}\, ds,
\end{align*}
which implies the claim by a standard argument. \end{proof}
\subsection{The one-phase boundary adjusted energy}
Let $d\ge 2$ and $u\in H^1(B_1)$. For any $\Lambda> 0$, we define 
the one-phase Weiss' boundary adjusted energy as 
\begin{equation}\label{e:weiss1}
W_{\text{\sc op}}(u) := \int_{B_1} |\nabla u|^2 \,dx - \int_{\partial B_1}u^2\,d\mathcal{H}^{d-1} + \Lambda \big|{\{u>0\}} \cap B_1\big|.
\end{equation}
Let $r>0$, $x_0 \in \R^2$ and $u \in H^1_{loc}(\R^d)$. The  relation between $W_{\text{\sc op}}$ and the excess $E$ is given by the following formula, which holds for any function $u$ and can be obtained by a direct computation (see \cite{weiss} and \cite{mateve}).
\begin{equation}\label{e:der1}
\frac{\partial}{\partial r}W_{\text{\sc op}}(u_{x_0,r}) = \frac{d}{r}\big(W_{\text{\sc op}}(z_{x_0,r})-W_{\text{\sc op}}(u_{x_0,r})\big)+\frac{1}{r}E(u_{x_0,r}),
\end{equation}
where $z_{x_0,r}$ denotes the one-homogeneous extension of the trace of $u_{x_0,r}$ in $B_1$, that is, 
\begin{equation}\label{e:zeta}
z_{x_0,r}(x):=|x|\,u_{x_0,r}\left(\sfrac{x}{|x|}\right) = \frac{|x|}{r}\,u\left(\sfrac{rx}{|x|}\right)\,, \quad \text{for every}\quad x \in B_1. 
\end{equation}

\begin{theorem}[Epiperimetric inequality for $W_{\text{\sc op}}$]\label{t:epi1}
Let $d=2$. Let $C_0>0$ be a given constant. There exists a constant $\varepsilon>0$ such that: for every non-negative $c\in H^1(\partial B_1)$ satisfying $\displaystyle\int_{\partial B_1}c\,d\HH^1\ge C_0$, there exists a non-negative function $h \in H^1(B_1)$ such that $h=c$ on $\partial B_1$ and 
\begin{equation}\label{eq epi}
W_{\text{\sc op}}(h) - \Lambda \frac{\pi}{2} \leq (1-\varepsilon)\left(W_{\text{\sc op}}(z) - \Lambda \frac{\pi}{2}\right),
\end{equation}
where $W_{\text{\sc op}}$ is given by \eqref{e:weiss1} and $z\in H^1(B_1)$ denotes the one-homogeneous extension of $c$ into $B_1$. Moreover, the competitor $h$ has the following properties:
\begin{enumerate}
\item[(a)] There is a universal numerical constant $C>0$ such that $\|h\|_{H^1(B_1)}\le C\|c\|_{H^1(\partial B_1)}$.
\item[(b)] If $H_{x_0,\nu}:=\big\{x\in\R^2\,:\,(x-x_0)\cdot\nu \geq 0\big\}$, for some $x_0\in\R^2$ and $\nu\in\partial B_1$, is a half-plane such that 
\begin{equation}\label{e:halfplane}
0\in\mathcal \overline{H}_{x_0,\nu}\qquad\text{and}\qquad z=0\quad\text{on}\quad \R^2\setminus H_{x_0,\nu}\,,
\end{equation}
then we can choose $h$ such that $h=0$ on  $\R^2\setminus H_{x_0,\nu}$. 
\end{enumerate}
\end{theorem}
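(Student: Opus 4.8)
The plan is to reduce the inequality \eqref{eq epi} to a one-dimensional problem on $\partial B_1\cong S^1$, in the spirit of \cite{spve}, and then to read off the quantitative bound (a) and the half-plane compatibility (b) from the competitor that one constructs. Writing $c=c(\theta)$ on $S^1$, a direct computation gives
\[
W_{\text{\sc op}}(z)=\tfrac12\Big(\|\partial_\theta c\|_{L^2(S^1)}^2-\|c\|_{L^2(S^1)}^2\Big)+\tfrac{\Lambda}{2}\,\HH^1\big(\{c>0\}\big),
\]
while for a half-plane datum $c_\ast:=a\,(x\cdot\nu)_+\big|_{\partial B_1}$ the first two terms cancel, so that $W_{\text{\sc op}}(z_\ast)=\Lambda\pi/2$ for every amplitude $a>0$ and every $\nu\in\partial B_1$. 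Thus $\Lambda\pi/2$ is precisely the common energy of the two-parameter family $\mathcal M$ of half-plane solutions, and \eqref{eq epi} asks for a competitor lowering the surplus $\sigma(z):=W_{\text{\sc op}}(z)-\Lambda\pi/2$ by a fixed factor. If $\sigma(z)\le0$ I would simply take $h=z$, whose trace is $c$ and whose positivity set is the solid cone over $\{c>0\}$: then $\sigma(h)=\sigma(z)\le(1-\varepsilon)\sigma(z)$, and (a), (b) hold trivially. So one may henceforth assume $\sigma(z)\ge0$. The hypothesis $\int_{\partial B_1}c\,d\HH^1\ge C_0$ will serve as a non-degeneracy condition: together with $c\ge0$ it bounds from below both the amplitude of any half-plane solution close to $c$ and the average of $c$ over $\partial B_1$.

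The core of the proof is a dichotomy. \emph{Far regime:} suppose that every connected arc of $\{c>0\}$ has length $\le\pi-\eta$, for a small parameter $\eta$ to be fixed (so in particular $\HH^1(\{c>0\})<\pi$). As a competitor I would take the harmonic extension of $c$ in the solid sector over each such arc, extended by $0$ elsewhere; since $c$ vanishes at the endpoints of each arc, this is an admissible $h\in H^1(B_1)$ with trace $c$, and its positivity set has \emph{exactly the same} measure $\tfrac12\HH^1(\{c>0\})$ as that of $z$. Expanding $c$ on an arc of length $\ell$ in the Dirichlet sine-basis, passing from $z$ to $h$ replaces the $n$-th contribution $\tfrac{n^2\pi^2-\ell^2}{4\ell}$ to $\int_{B_1}|\nabla z|^2-\int_{\partial B_1}z^2$ by $\tfrac{n\pi-\ell}{2}$, i.e. multiplies it by $\tfrac{2\ell}{n\pi+\ell}\le\tfrac{2(\pi-\eta)}{\pi+\ell}<1$, with a gap depending only on $\eta$; since the measure term contributes the same amount $\tfrac\Lambda2\big(\HH^1(\{c>0\})-\pi\big)$ to $\sigma(z)$ and $\sigma(h)$, one gets $\sigma(h)\le(1-\varepsilon)\sigma(z)$ with $\varepsilon=\varepsilon(\eta)$. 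The remaining far cases --- some arc of length $\ge\pi-\eta$ while $c$ stays at $H^1(\partial B_1)$-distance $\ge\eta$ from $\mathcal M$, or $\HH^1(\{c>0\})\ge\pi+\eta$, or $\{c>0\}$ almost all of $\partial B_1$ --- I would treat by variants of this construction (sectorial replacement, the full harmonic extension, or thin boundary-layer profiles on spurious small components of $\{c>0\}$): these still strictly lower the quadratic energy, gaining a fixed fraction on the modes transverse to the half-plane shape, while the slack $\tfrac\Lambda2\big|\HH^1(\{c>0\})-\pi\big|\ge\tfrac{\Lambda\eta}2$ together with the lower bound on the average of $c$ absorb any loss on the measure term; here $\varepsilon$ will also depend on $C_0$ and $\Lambda$.

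\emph{Near regime:} it then remains to handle $c$ at small $H^1(\partial B_1)$-distance from some $c_\ast=a\,(x\cdot\nu)_+\big|_{\partial B_1}\in\mathcal M$ with $a\gtrsim C_0$. Writing $c=c_\ast+\phi$ with $\|\phi\|_{H^1(\partial B_1)}$ small, I would expand $\sigma(z)$ around $z_\ast$: it splits into a linear part, governed by the outward displacement of $\{c>0\}$ past the half-circle $\{c_\ast>0\}$ (a combination of the first variation of the Dirichlet energy across the free-boundary rays of $z_\ast$, where $\partial_\theta c_\ast\neq0$, and the linear variation of the measure term), plus a quadratic part $\tfrac12\mathcal Q(\phi^\perp)$ in the component $\phi^\perp$ of $\phi$ transverse to $\mathcal M$, where $\mathcal Q$ is coercive. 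When the linear part is $\le0$ (the positivity set is pushed inward) one again wins with $h=z$; when it is positive, the competitor is obtained by trimming the overhang of $\{h>0\}$ back to the cone over $\{c_\ast>0\}$ via a thin boundary-layer profile --- which kills the positive linear part at the cost of only $o(\|\phi\|_{H^1(\partial B_1)})$ in the measure term --- combined with the sectorial harmonic replacement for the transverse quadratic modes, which multiplies each of their contributions by a factor $\le2/3$; fixing $\eta$ small and $\varepsilon$ accordingly yields \eqref{eq epi} in this regime as well.

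It would then remain to read off (a) and (b). In every branch, $h$ is produced from $c$ by bounded operations --- $z$ itself, a half-plane datum of amplitude $O(\|c\|_{L^2(\partial B_1)})$, sectorial or full harmonic extensions, and boundary-layer profiles controlled in $H^1$ by the trace --- so $\|h\|_{H^1(B_1)}\le C\|c\|_{H^1(\partial B_1)}$ with $C$ universal, which is (a). For (b): if $z=0$ outside a half-plane $H_{x_0,\nu}$ with $0\in\overline{H}_{x_0,\nu}$, then $c$ vanishes on $\partial B_1\setminus H_{x_0,\nu}$, so $\{c>0\}$ is contained in the arc $\partial B_1\cap H_{x_0,\nu}$; and since $0\in\overline{H}_{x_0,\nu}$, the solid cone over that arc is itself contained in $H_{x_0,\nu}$. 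As $h=z$, the sectorial harmonic replacements and the near-regime construction are all supported in (a small deformation of) that cone, one obtains $h=0$ on $\R^2\setminus H_{x_0,\nu}$; where a correction might spill out, it can be cut off by the indicator of $H_{x_0,\nu}$ without destroying the $H^1$-regularity, because $c$, hence $h$, vanishes on $\partial H_{x_0,\nu}\cap B_1$. The main obstacle --- and the reason the inequality is confined to $d=2$ --- is the non-quadratic term $\Lambda\,|\{h>0\}\cap B_1|$: the harmonic extension of $c$ on the full disk is positive throughout $B_1$ as soon as $c\not\equiv0$, so one cannot decouple the quadratic improvement from the positivity set, and is forced to work sector by sector and to trim overhanging components by hand --- a purely two-dimensional device with no higher-dimensional analogue --- while near $\mathcal M$ the same term is what makes the balancing of the coercive quadratic gain against the motion of the positivity set delicate.
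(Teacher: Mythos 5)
First, a point of reference: the paper you are reading does not prove Theorem \ref{t:epi1} at all --- it is imported verbatim from \cite{spve}, so there is no internal proof to compare against. Your strategy (reduce $W_{\text{\sc op}}(z)$ to $\tfrac12(\|\partial_\theta c\|^2-\|c\|^2)+\tfrac\Lambda2\HH^1(\{c>0\})$, identify $\Lambda\pi/2$ as the energy of the half-plane family, and run a far/near dichotomy with sectorial harmonic replacement of the Fourier modes) is indeed the strategy of \cite{spve}, and your ``far regime'' computation is correct: the ratio $\tfrac{2\ell}{n\pi+\ell}$ for the $n$-th mode, the preservation of the measure term under sectorial harmonic extension, and the use of $\int_{\partial B_1}c\ge C_0$ to bound the Dirichlet surplus from below when $\HH^1(\{c>0\})\ge\pi+\eta$ can all be made rigorous.

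The genuine gap is the near regime, which is the actual content of the theorem. Two problems. (i) Your claim that ``when the linear part is $\le 0$ one again wins with $h=z$'' is false: $h=z$ gives $\sigma(h)=\sigma(z)\le(1-\eps)\sigma(z)$ only when $\sigma(z)\le0$; if the linear part is nonpositive but the coercive quadratic part makes $\sigma(z)>0$, you still owe a gain of $\eps\sigma(z)$ and must modify $z$. (ii) In the positive-linear-part case you assert that trimming the overhang of $\{z>0\}$ back to the cone over $\{c_*>0\}$ ``kills the positive linear part at the cost of only $o(\|\phi\|)$ in the measure term,'' but this has the bookkeeping backwards: trimming \emph{gains} $\tfrac\Lambda2(\ell-\pi)$ on the measure term and \emph{costs} Dirichlet energy (the trace $c$ is strictly positive on the overhanging arc, so forcing $h$ to vanish across a thin layer costs an amount comparable to $\|c\|_{L^2}^2$ of that arc divided by the layer width). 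The whole difficulty of the epiperimetric inequality is that for the exact amplitude $a^2=\Lambda$ the linear contributions of the Dirichlet and measure terms cancel, so $\sigma(z)$ can be dominated by terms that the sectorial replacement does not see; you give no quantitative mechanism producing a gain proportional to $\sigma(z)$ in this case, and no choice of layer width balancing the two effects. A smaller gap of the same nature occurs in (b): cutting $h$ off by $\ind_{H_{x_0,\nu}}$ preserves $H^1$ only if $h$ already vanishes on the chord $\partial H_{x_0,\nu}\cap B_1$, which does not follow from $c=0$ on $\partial B_1\setminus H_{x_0,\nu}$; you need the competitor to be supported in the cone over $\{c>0\}$ by construction, and you must check that the near-regime competitor (which enlarges the positivity set toward a half-disk) does not leave $H_{x_0,\nu}$.
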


\subsection{The two-phase boundary adjusted energy}
For every $\Lambda_1,\Lambda_2>0$ and $v\in H^1(B_1)$, we define the two-phase Weiss' boundary adjusted energy  as
\begin{equation}\label{e:weiss2}
W_{\text{\sc tp}}(v) =\int_{B_1} |\nabla v|^2 \,dx - \int_{\partial B_1}v^2\,d\mathcal{H}^{d-1} +\Lambda_1|{\{v>0\}} \cap B_1| + \Lambda_2|{\{v<0\}} \cap B_1|.
\end{equation}
As in the one phase case, we have 
\begin{equation}\label{e:der2}
\frac{\partial}{\partial r}W_{\text{\sc tp}}(u_{x_0,r}) = \frac{d}{r}\big(W_{\text{\sc tp}}(z_{x_0,r})-W_{\text{\sc tp}}(u_{x_0,r})\big)+\frac{1}{r}E(u_{x_0,r}),
\end{equation}
where $z_{x_0,r}$ is given by \eqref{e:zeta}.

\begin{theorem}[Epiperimetric inequality for $W_{\text{\sc tp}}$]\label{t:epi2}
Let $d=2$. For every $C_0>0$ there is $\varepsilon>0$ such that: for every $c \in H^1(\partial B_1)$ such that $\displaystyle\int_{\partial B_1} c^+ \,d\mathcal{H}^1 \geq C_0$ and $\displaystyle\int_{\partial B_1} c^- \,d\mathcal{H}^1 \geq C_0$, there exists a function $h \in H^1(B_1)$ with $h=c$ on $\partial B_1$ such that
\begin{equation}\label{eq epi DP}
W_{\text{\sc tp}}(h) - (\Lambda_1 + \Lambda_2) \frac{\pi}{2} \leq (1-\varepsilon)\left(W_{\text{\sc tp}}(z) - (\Lambda_1 + \Lambda_2) \frac{\pi}{2}\right),
\end{equation}
where $z\in H^1(B_1)$ is the one-homogeneous extension of the trace of $c$ to $B_1$. Moreover, there is a universal numerical constant $C>0$ such that $\|h\|_{H^1(B_1)}\le C\|c\|_{H^1(\partial B_1)}$.
\end{theorem}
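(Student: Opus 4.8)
The plan is to reduce Theorem~\ref{t:epi2} to a one-dimensional problem on $\partial B_1$ and then, when the geometry permits, to the one-phase inequality Theorem~\ref{t:epi1}. In dimension two a $1$-homogeneous function $z(r\theta)=r\,g(\theta)$ has $\int_{B_1}|\nabla z|^2\,dx-\int_{\partial B_1}z^2\,d\mathcal H^1=\tfrac12\int_0^{2\pi}\big((g')^2-g^2\big)\,d\theta$ and $|\{z>0\}\cap B_1|=\tfrac12|\{g>0\}|$, $|\{z<0\}\cap B_1|=\tfrac12|\{g<0\}|$; hence for the $1$-homogeneous extension $z$ of $c$,
\[
\mathcal E:=W_{\text{\sc tp}}(z)-(\Lambda_1+\Lambda_2)\tfrac\pi2=\tfrac12\Big[\int_0^{2\pi}\big((c')^2-c^2\big)\,d\theta+\Lambda_1\big(|\{c>0\}|-\pi\big)+\Lambda_2\big(|\{c<0\}|-\pi\big)\Big],
\]
which vanishes in particular on the traces of the broken-linear two-phase models $a(x\cdot\nu)_+-b(x\cdot\nu)_-$, $a,b>0$. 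If $\mathcal E\le0$ one simply takes $h=z$, so assume $\mathcal E>0$.

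Let $z^\pm$ be the $1$-homogeneous extensions of $c^\pm$; since $\{c>0\}$, $\{c<0\}$ are disjoint, $z^\pm$ have disjoint supports, $z=z^+-z^-$, and, with $W^{(i)}$ the energy \eqref{e:weiss1} for $\Lambda=\Lambda_i$, $\mathcal E=(W^{(1)}(z^+)-\Lambda_1\tfrac\pi2)+(W^{(2)}(z^-)-\Lambda_2\tfrac\pi2)=:X^++X^-$. Using $\int_{\partial B_1}c^\pm\,d\mathcal H^1\ge C_0$, apply Theorem~\ref{t:epi1} to $c^+$ with $\Lambda=\Lambda_1$ and to $c^-$ with $\Lambda=\Lambda_2$, keeping the resulting competitor when $X^\pm>0$ and taking $z^\pm$ otherwise: this gives $h^\pm\ge0$ with $h^\pm=c^\pm$ on $\partial B_1$ and $W^{(i)}(h^\pm)-\Lambda_i\tfrac\pi2\le X^\pm-\varepsilon_0(X^\pm)_+$, for a common $\varepsilon_0=\varepsilon_0(C_0,\Lambda_1,\Lambda_2)>0$. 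Suppose now that $\{c>0\}$ and $\{c<0\}$ lie in complementary closed half-circles $\{e_\theta\cdot\nu\ge0\}$, $\{e_\theta\cdot\nu\le0\}$: then $z^+$ vanishes outside $\{x\cdot\nu\ge0\}$ and $z^-$ outside $\{x\cdot\nu\le0\}$, half-planes which contain $0$ in their closure, so by property~(b) of Theorem~\ref{t:epi1} (when $X^\pm>0$; automatic when $X^\pm\le0$) we may take $h^+$, $h^-$ vanishing outside these complementary half-planes. Then $h^+h^-=0$ a.e., so $h:=h^+-h^-$ satisfies $h=c$ on $\partial B_1$, $h_\pm=h^\pm$, and
\[
W_{\text{\sc tp}}(h)-(\Lambda_1+\Lambda_2)\tfrac\pi2\le X^++X^--\varepsilon_0\big((X^+)_++(X^-)_+\big)\le(1-\varepsilon_0)\,\mathcal E,
\]
since $(X^+)_++(X^-)_+\ge X^++X^-=\mathcal E>0$; this is the assertion with $\varepsilon=\varepsilon_0$.

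When $\{c>0\}$ and $\{c<0\}$ cannot be separated by a diameter — one of them occupies an arc longer than $\pi$, or $c$ has more than two sign changes — property~(b) is no longer available and $h$ must be constructed directly, not of the form $h^+-h^-$. The construction starts from $z$ and modifies it only inside the cone over each nodal arc $I$ of $c$: there one replaces the homogeneous radial profile $r$ by the profile minimising the Dirichlet part with the prescribed boundary datum (essentially $r^{k\pi/|I|}$ on the $k$-th angular frequency of $c|_I$), which yields a definite gain from every frequency $k\ge2$ — bounded below because $\int_{\partial B_1}c^\mp\ge C_0$ keeps $|I|$ away from $2\pi$ — and a gain of order $(|I|-\pi)^2$ from the first frequency; to make the latter effective when $|I|$ is near $\pi$ one in addition deforms $I$ towards an arc of length $\pi$. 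Since each piece vanishes at the endpoints of its arc, the modified pieces glue continuously to $0$ over the zero set of $c$; summing over the boundedly many arcs that matter (the short ones being absorbed by the $\Lambda_i$-terms) and checking that the induced change of the volume terms $\Lambda_i|\{\cdot\}\cap B_1|$ is dominated by the Dirichlet gain gives $W_{\text{\sc tp}}(h)-(\Lambda_1+\Lambda_2)\tfrac\pi2\le(1-\varepsilon)\mathcal E$ with $\varepsilon=\varepsilon(C_0,\Lambda_1,\Lambda_2)>0$.

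The main obstacle is precisely this last point: the Dirichlet part of $W_{\text{\sc tp}}$ is quadratic and furnishes the gain, whereas the penalisations $\Lambda_i|\{\cdot\}\cap B_1|$ depend only Lipschitz-continuously on perturbations of the nodal set, so the competitor must be engineered so that the change of volume it produces is favourably signed or of lower order than the Dirichlet gain, uniformly over the non-rigid family of models (rotations together with the two free slopes $a,b$) — which is also why a bare compactness–contradiction scheme fails here. The lower bounds $\int_{\partial B_1}c^\pm\,d\mathcal H^1\ge C_0$ are indispensable: they keep both nodal arcs of length bounded below, so the nearest model depends continuously on $c$, and they exclude degeneration to a one-phase configuration, whose density $\Lambda_i\pi/2$ is strictly smaller than $(\Lambda_1+\Lambda_2)\pi/2$ and would contradict the inequality.
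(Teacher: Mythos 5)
The paper does not prove Theorem~\ref{t:epi2}; as stated in the abstract and at the start of Section~\ref{s:epi}, both epiperimetric inequalities (Theorems~\ref{t:epi1} and \ref{t:epi2}) are quoted from \cite{spve}, so there is no in-paper argument to measure your attempt against.

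On the attempt itself: the formula for $\mathcal E$ is correct, and so is the reduction in the special case where $\{c>0\}$ and $\{c<0\}$ are separated by a diameter --- there property~(b) of Theorem~\ref{t:epi1} produces $h^\pm$ supported in complementary half-planes through the origin, $W_{\text{\sc tp}}(h^+-h^-)$ decouples into the two one-phase Weiss energies with constants $\Lambda_1$ and $\Lambda_2$, and the inequality follows from $(X^+)_++(X^-)_+\ge\mathcal E$ once $\mathcal E>0$. But this case is far too narrow to carry the theorem. The two-phase models $a(x\cdot\nu)_+ - b(x\cdot\nu)_-$ have both nodal arcs of length exactly $\pi$; a generic nearby $c$ has one arc longer than $\pi$, hence $\{c>0\}$ and $\{c<0\}$ cannot be split by a diameter, and your first construction is unavailable in precisely the regime where the epiperimetric inequality has to do its work. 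For that regime you only sketch the approach: you mention replacing the radial profile by the Fourier-optimal powers $r^{k\pi/|I|}$ on each nodal arc and ``deforming $I$ toward an arc of length $\pi$'', but you never define the deformation, never compute the resulting change in $\Lambda_1|\{h>0\}\cap B_1|+\Lambda_2|\{h<0\}\cap B_1|$, and never show that the quadratic Dirichlet gain dominates the first-order (Lipschitz) volume cost uniformly over the non-compact family of models and over the arc lengths allowed by $\int_{\partial B_1} c^\pm\ge C_0$. That quantitative balance is exactly the hard content of the inequality and is what \cite{spve} establishes; as written, your argument correctly identifies the difficulty but does not resolve it, so \eqref{eq epi DP} remains unproved outside the separated case.
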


\subsection{Almost-monotonicity and almost-minimality} 
Let $u\in H^1_{loc}(\R^d)$ and $x_0\in\R^d$. For any $r>0$, the function $u_{x_0,r}$ and $z_{x_0,r}$ are defined as in \eqref{e:resc} and \eqref{e:zeta}, respectively. In the next lemma we will show that a almost-minimality of $u$, with respect to radial perturbations, implies that the function $r\mapsto W_{\square}(u_{x_,r})$ is monotone up to a small error term ($\square$ stands for {\sc op} or {\sc tp}). 
\begin{lemma}[Monotonicity of $W_\square$]\label{l:mono}
Let $u\in H^1_{loc}(\R^d)$ and $x_0\in\R^d$. Suppose that there are constants $r_0>0$, $C>0$ and $\q>0$ such that 
\begin{equation}\label{e:almost_zeta}
W_{\square}(u_{x_0,r})\le W_\square(z_{x_0,r})+Cr^\q\quad\text{for every}\quad r\in(0,r_0),
\end{equation}
where $\square$ stands for {\sc op} or {\sc tp}. Then, the function 
\begin{equation}\label{e:mono}
r\mapsto W_\square(u_{x_0,r})+\frac{Cd}{\q}r^\q,
\end{equation}
is non-decreasing on the interval $(0,r_0)$. 
\end{lemma}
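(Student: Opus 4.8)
The plan is to integrate the differential identity \eqref{e:der1} (resp. \eqref{e:der2}) and use the hypothesis \eqref{e:almost_zeta} to control the sign of the resulting derivative. First I would fix $x_0\in\R^d$ and, for brevity, drop it from the notation, writing $u_r:=u_{x_0,r}$ and $z_r:=z_{x_0,r}$. Recall that \eqref{e:der1} (which holds verbatim with {\sc op} replaced by {\sc tp} via \eqref{e:der2}) gives, for a.e.\ $r\in(0,r_0)$,
\begin{equation*}
\frac{\partial}{\partial r}W_{\square}(u_r)=\frac{d}{r}\bigl(W_{\square}(z_r)-W_{\square}(u_r)\bigr)+\frac{1}{r}E(u_r).
\end{equation*}
Since $E(u_r)\ge 0$ by its definition \eqref{e:excess}, and since the almost-minimality assumption \eqref{e:almost_zeta} says precisely that $W_{\square}(z_r)-W_{\square}(u_r)\ge -Cr^{\q}$, we obtain
\begin{equation*}
\frac{\partial}{\partial r}W_{\square}(u_r)\ge \frac{d}{r}\bigl(-Cr^{\q}\bigr)= -Cd\,r^{\q-1}.
\end{equation*}

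Next I would observe that $-Cd\,r^{\q-1}=\frac{\partial}{\partial r}\bigl(-\frac{Cd}{\q}r^{\q}\bigr)$, so rearranging the above inequality yields
\begin{equation*}
\frac{\partial}{\partial r}\Bigl(W_{\square}(u_r)+\frac{Cd}{\q}r^{\q}\Bigr)\ge 0\qquad\text{for a.e. }r\in(0,r_0).
\end{equation*}
It remains to upgrade this a.e.\ differential inequality to genuine monotonicity of the function in \eqref{e:mono}. This is a standard measure-theoretic point: one needs that $r\mapsto W_{\square}(u_r)$ is absolutely continuous on compact subintervals of $(0,r_0)$, which follows from the fact that $u\in H^1_{loc}$, so that $r\mapsto\int_{B_r(x_0)}|\nabla u|^2$, $r\mapsto |\{u>0\}\cap B_r(x_0)|$ (and the analogous negativity term) and the boundary integral are all absolutely continuous in $r$ after the one-homogeneous rescaling; then a function whose distributional derivative is nonnegative is non-decreasing.

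The only genuinely delicate step is this last regularity/absolute-continuity point, needed to justify integrating the pointwise inequality — in particular verifying that \eqref{e:der1}--\eqref{e:der2} hold in the appropriate (e.g.\ distributional or a.e.\ with an absolutely continuous primitive) sense for a general $H^1_{loc}$ function, rather than only formally; this is exactly the content invoked from \cite{weiss, mateve}. Everything else is a one-line rearrangement. I would therefore structure the write-up as: (i) recall \eqref{e:der1}/\eqref{e:der2} and the absolute continuity of $r\mapsto W_{\square}(u_r)$; (ii) insert $E\ge 0$ and \eqref{e:almost_zeta} to bound the derivative below by $-Cd\,r^{\q-1}$; (iii) recognize the right-hand side as an exact derivative and conclude that \eqref{e:mono} has nonnegative derivative, hence is non-decreasing on $(0,r_0)$.
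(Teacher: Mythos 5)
Your proposal is correct and follows essentially the same route as the paper: apply \eqref{e:der1}/\eqref{e:der2}, discard the nonnegative excess term, insert \eqref{e:almost_zeta} to bound the derivative below by $-Cd\,r^{\q-1}$, and absorb this into the derivative of $\frac{Cd}{\q}r^{\q}$. Your extra remarks on absolute continuity (and your corrected sign in the lower bound, where the paper's displayed inequality has an evident typo) only make the argument more careful, not different.
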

\begin{proof}
Using \eqref{e:der1} for $\square=${\sc op} (resp. \eqref{e:der2} for $\square=${\sc tp}), and the condition \eqref{e:almost_zeta} we get  
\begin{align*}
\frac{\partial }{\partial r}W_\square(u_{x_0,r})&\ge \frac{d}{r}\left(W_\square(z_{x_0,r})-W_\square(u_{x_0,r})\right)\ge Cd\, r^{\q-1},
\end{align*}
which gives \eqref{e:mono}.
\end{proof}

\subsection{Epiperimetric inequality and energy decay}
In this section we show how to use the epiperimetric inequality to obtain at once the decay for the energy $W_\square(u_{x_0,r})$ and the convergence of $u_{x_0,r}$ in $L^2(\partial B_1)$. The argument is very general and we treat the cases $\square=\text{\sc op}$ and $\square=\text{\sc tp}$ simultaneously.
\begin{lemma}\label{l:decay}
Let $u\in H^1_{loc}(\R^d)$, $x_0\in\R^d$ and $W_\square$ be as in \eqref{e:weiss1}, if $\square=\text{\sc op}$, and \eqref{e:weiss2}, if $\square=\text{\sc tp}$. Suppose that there are constants $r_0\in(0,1)$, $C>0$, $\q>0$ and $\varepsilon\in(0,\frac{\delta}{2d+\delta})$ such that: 

\noindent {\rm(a)} \eqref{e:almost_zeta} holds and the limit $\displaystyle\Theta_\square:=\lim_{r\to0}W_\square(u_{x_0,r})$ (which exists due to Lemma \ref{l:mono}) is finite; 

\noindent {\rm(b)} for every $r\in(0,r_0)$ there is a function $h_{x_0,r}\in H^1(B_1)$ such that  
\begin{equation}\label{e:almost_h}
W_{\square}(u_{x_0,r})\le 
W_\square(h_{x_0,r})+Cr^\q,
\end{equation}
and we have the epiperimetric inequality
\begin{equation}\label{e:epi_square}
W_{\square}(h_{x_0,r})-\Theta_{\square}\le (1-\varepsilon)\big(W_{\square}(z_{x_0,r})-\Theta_{\square}\big).
\end{equation}

Then, there is a unique function $u_{x_0}\in L^2(\partial B_1)$ such that 
$$\|u_{r,x_0}-u_{x_0}\|_{L^2(\partial B_1)}^2\le \gamma^{-1} I\, r^{\gamma}\quad\text{for every}\quad r\in(0,r_0),$$
where $\gamma=\frac{d\eps}{1-\eps}$ and $\displaystyle  I=r_0^{-\gamma}\big(W_\square(u_{x_0,r_0})-\Theta_\square\big)+\frac{dC}{\q-\gamma}r_0^{\q-\gamma}$.
\end{lemma}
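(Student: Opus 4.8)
The plan is to derive the $L^2(\partial B_1)$-convergence of $u_{x_0,r}$ from an integrated differential inequality for the quantity $W_\square(u_{x_0,r})-\Theta_\square$, obtained by feeding the epiperimetric inequality into the Weiss derivative formula, and then invoke Lemma~\ref{l:excess}. Throughout I set $x_0=0$ and write $u_r:=u_{x_0,r}$, $z_r:=z_{x_0,r}$, $h_r:=h_{x_0,r}$, and $e(r):=E(u_r)$. Let me denote $\phi(r):=W_\square(u_r)-\Theta_\square\ge 0$ (nonnegativity from Lemma~\ref{l:mono} and the definition of $\Theta_\square$, up to the $Cr^\q$ correction, which I will carry along).

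First I would combine \eqref{e:almost_h} and \eqref{e:epi_square} to bound $W_\square(z_r)$ from below in terms of $W_\square(u_r)$: from $W_\square(u_r)\le W_\square(h_r)+Cr^\q$ and $W_\square(h_r)-\Theta_\square\le(1-\eps)\big(W_\square(z_r)-\Theta_\square\big)$ we get
\begin{equation*}
W_\square(z_r)-\Theta_\square\ge \frac{1}{1-\eps}\Big(W_\square(u_r)-\Theta_\square-Cr^\q\Big)=\frac{1}{1-\eps}\big(\phi(r)-Cr^\q\big).
\end{equation*}
Plugging this into the derivative identity \eqref{e:der1} (resp. \eqref{e:der2}), which reads $\phi'(r)=\frac{d}{r}\big(W_\square(z_r)-W_\square(u_r)\big)+\frac1r e(r)$, yields
\begin{equation*}
\phi'(r)\ge \frac{d}{r}\Big(\frac{1}{1-\eps}\big(\phi(r)-Cr^\q\big)-\phi(r)\Big)+\frac1r e(r)=\frac{d\eps}{1-\eps}\,\frac{\phi(r)}{r}+\frac1r e(r)-\frac{dC}{1-\eps}r^{\q-1}.
\end{equation*}
Writing $\gamma:=\frac{d\eps}{1-\eps}$, this is $\phi'(r)-\frac{\gamma}{r}\phi(r)\ge \frac1r e(r)-\frac{dC}{1-\eps}r^{\q-1}$, i.e. $\big(r^{-\gamma}\phi(r)\big)'\ge r^{-\gamma-1}e(r)-\frac{dC}{1-\eps}r^{\q-\gamma-1}$. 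The constraint $\eps<\frac{\delta}{2d+\delta}$ guarantees $\gamma<\delta\le\q$ (here I use that in the intended application $\q\ge\delta$; more precisely the hypothesis ensures $\q-\gamma>0$, which is what the final constant $I$ requires), so $r\mapsto r^{\q-\gamma-1}$ is integrable at $0$.

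Next I would integrate this inequality over $(r,r_0)$. Since $r^{-\gamma}\phi(r)\ge 0$ and the left endpoint contributes a nonnegative term, we obtain
\begin{equation*}
\int_r^{r_0}\frac{e(s)}{s^{1+\gamma}}\,ds\le r_0^{-\gamma}\phi(r_0)+\frac{dC}{1-\eps}\int_r^{r_0}s^{\q-\gamma-1}\,ds\le r_0^{-\gamma}\big(W_\square(u_{r_0})-\Theta_\square\big)+\frac{dC}{\q-\gamma}r_0^{\q-\gamma}=:I,
\end{equation*}
uniformly in $r$. Letting $r\to0$ gives $\int_0^{r_0}\frac{E(u_s)}{s^{1+\gamma}}\,ds\le I$, which is precisely hypothesis \eqref{e:intE} of Lemma~\ref{l:excess} with this $\gamma$ and this $I$. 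Applying that lemma produces the unique $u_{x_0}\in L^2(\partial B_1)$ with $\|u_{r,x_0}-u_{x_0}\|_{L^2(\partial B_1)}^2\le\gamma^{-1}Ir^\gamma$ for all $r\in(0,r_0)$, as claimed.

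The main obstacle — really the only delicate point — is bookkeeping the $Cr^\q$ error terms consistently: one must check that $\phi(r)=W_\square(u_r)-\Theta_\square$ is indeed controlled from below (Lemma~\ref{l:mono} only gives monotonicity of $W_\square(u_r)+\frac{Cd}{\q}r^\q$, so strictly speaking $\phi$ could be slightly negative, of order $r^\q$), that the epiperimetric inequality \eqref{e:epi_square} is being applied with the correct constant $\Theta_\square$ on both sides (this is why the problem is set up with the boundary-adjusted energy normalized by $\Lambda\frac\pi2$, resp. $(\Lambda_1+\Lambda_2)\frac\pi2$, equal to the homogeneous value), and that absorbing the $r^\q$-terms into the differential inequality does not spoil the integrability, which is exactly where $\eps<\frac{\delta}{2d+\delta}$ (hence $\q-\gamma>0$ in the application) is used. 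All of these amount to elementary ODE-comparison estimates once the algebraic identity \eqref{e:der1}/\eqref{e:der2} and the two inequalities \eqref{e:almost_h}, \eqref{e:epi_square} are in hand.
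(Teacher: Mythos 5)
Your proposal is correct and follows essentially the same route as the paper: feed the epiperimetric inequality and the almost-minimality of $h_{x_0,r}$ into the derivative identity \eqref{e:der1}/\eqref{e:der2}, deduce that $r^{-\gamma}\big(W_\square(u_{x_0,r})-\Theta_\square\big)+\tfrac{dC}{\q-\gamma}r^{\q-\gamma}$ is monotone with derivative at least $r^{-\gamma-1}E(u_{x_0,r})$, integrate to obtain \eqref{e:intE}, and conclude via Lemma~\ref{l:excess}. The two caveats you flag (the possible $O(r^{\q})$ negativity of $W_\square(u_{x_0,r})-\Theta_\square$, handled by enlarging $C$, and the need for $\q-\gamma>0$) are exactly the points the paper addresses, so no gap remains.
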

\begin{proof}
We use \eqref{e:der1} for $\square=${\sc op} (resp. \eqref{e:der2} for $\square=${\sc tp}), then the epiperimetric inequality \eqref{e:epi_square} and the almost-minimality condition \eqref{e:almost_h}. 
\begin{align*}
\frac{\partial }{\partial r}\big(W_\square(u_{x_0,r})-\Theta_\square\big)&\ge \frac{d}{r}\Big(\big(W_\square(z_{x_0,r})-\Theta_\square\big)-\big(W_\square(u_{x_0,r})-\Theta_\square\big)\Big)\\
&\ge \frac{d}{r}\Big(\frac1{1-\eps}\big(W_\square(h_{x_0,r})-\Theta_\square\big)-\big(W_\square(u_{x_0,r})-\Theta_\square\big)\Big)\\
&\ge \frac{d}{r}\Big(\frac\eps{1-\eps}\big(W_\square(u_{x_0,r})-\Theta_\square\big)-Cr^\q\Big),
\end{align*}
which implies that the function 
$$f(r)=\frac{W_\square(u_{x_0,r})-\Theta_\square}{r^{\gamma}}+\frac{dC}{\q-\gamma}r^{\q-\gamma}$$
is non-decreasing on $(0,r_0)$ for $\gamma=\frac{d\eps}{1-\eps}$, where we notice that $\gamma\le\frac\delta2$ due to the choice $\eps\le \frac{\q}{2d+\q}$. In particular, using again \eqref{e:der1} (resp. \eqref{e:der2}), we get 
\begin{align*}
f'(r)\ge\frac{1}{r^{\gamma+1}}E(u_{x_0,r}),
\end{align*}
which integrated gives
$$f(r_0)-f(s)\ge \int_s^{r_0}\frac{1}{r^{\gamma+1}}E(u_{x_0,r})\,dr,$$
for every $s\in(0,r_0)$. 
Now, notice that, up to choosing a bigger constant $C$ in \eqref{e:almost_h}, Lemma \ref{l:mono} implies that $f(s)\ge 0$ for every $s>0$. Thus, we get 
$$f(r_0)\ge \int_0^{r_0}\frac{1}{r^{\gamma+1}}E(u_{x_0,r})\,dr,$$
which is precisely \eqref{e:intE} with $I:=f(r_0)$.
\end{proof}	

\section{Change of variables and freezing of the coefficients}\label{s:freezing}
The arguments of the previous section, the monotonicity formula and the decay of the blow-up sequences, can be applied only in the case when the operator in $J_{\text{\tiny\sc op}}$ (resp. $J_{\text{\tiny\sc tp}}$) is the identity. Thus, in order to prove the regularity results Theorem \ref{t:constrained} and Theorem \ref{t:two-phase} we need to change the coordinates and reduce to the case $A=Id$. We prove the main estimate of this section in Lemma \ref{l:change_of_variables} below, but before we will introduce several notations.
\smallskip 

Let $A=(a_{ij})_{ij}:B_2\to Sym_2^+$ and $Q_{\text{\tiny\sc op}},Q_{\text{\tiny\sc tp}}^+,Q_{\text{\tiny\sc tp}}^-:B_2\to\R^+$ be as in the Introduction and note that we have 
$$\|A_x^{\sfrac12}\|\le M_{\text{\tiny \sc A}}^{\sfrac12}\quad\text{and}\quad \|A_x^{-\sfrac12}\|\le M_{\text{\tiny \sc A}}^{\sfrac12}\quad\text{for every}\quad x\in B_2,$$ 
where $\|A\|=\sup\big\{|Au|\,:\,u\in\R^2,\ |u|=1\big\}$.

\begin{remark}
We recall that if $M\in Sym_d^+$, then there is an orthogonal matrix $P$ such that $PMP^t=\text{diag}(\lambda_1,\dots,\lambda_d)$, where $P^t$ is the transpose of $P$ and $\text{diag}(\lambda_1,\dots,\lambda_d)$ is the diagonal matrix with eigenvalues $\lambda_1,\dots,\lambda_d$. We set $D=\text{diag}(\sqrt\lambda_1,\dots,\sqrt\lambda_d)$ and define $M^{\sfrac12}:=P^tDP$.
\end{remark}

For $x_0\in B_2$ and $r>0$ we define the functionals
$$J_{\text{\tiny\sc op}}^{x_0}(v,r):=\int_{B_r} \Big(|\nabla v|^2 +Q_{\text{\tiny\sc op}}(x_0)\ind_{\{v>0\}}\Big)\,dx\,;$$
$$J_{\text{\tiny\sc tp}}^{x_0}(v,r):=\int_{B_r} \Big(|\nabla v|^2 +Q_{\text{\tiny\sc tp}}^+(x_0)\ind_{\{v>0\}}+Q_{\text{\tiny\sc tp}}^-(x_0)\ind_{\{v<0\}}\Big)\,dx\,.$$
For every $x_0\in B_1$, we define the function 
\begin{equation}\label{e:F_x_0}
F_{x_0}(x):=x_0+A_{x_0}^{\sfrac12}(x)
\end{equation}
and the half-plane $\mathcal H_{x_0}:=\big\{x\in\R^2\, :\, F_{x_0}(x)\cdot e_2>0\big\}\,,\ \text{\,where\,}\ e_2=(0,1).$

\begin{lemma}\label{l:change_of_variables}
Let $L>0$. There are constants $C>0$ and $r_0\in(0,1)$ (depending only on $C_{\text{\tiny \sc A}}$, $C_{\text{\tiny \sc Q}}$, $M_{\text{\tiny \sc A}}$, $M_{\text{\tiny \sc Q}}, \delta_{\text{\tiny \sc A}},\delta_{\text{\tiny \sc Q}},\delta_1,C_1$ and $L$) and $\delta=\min\{\delta_{\text{\tiny \sc A}}, \delta_{\text{\tiny \sc Q}},\delta_1\}$ such that: if $u\in H^1(B_1)$ is a nonnegative $L$-Lipschitz continuous function and a almost-minimizer of $J_{\text{\tiny\sc op}}$ in $B_2^+$, $x_0\in B_{r_0}\cap\partial \Omega_u$ and $\bar u=u\circ F_{x_0}$ ($F_{x_0}$ is defined in \eqref{e:F_x_0} above), then  for every $r\in(0,r_0)$, 
\begin{align}\label{e:almost-minimality_in_x_0}
J_{\text{\tiny\sc op}}^{x_0}(\bar u, r)&\le (1+Cr^\delta)J_{\text{\tiny\sc op}}^{x_0}(\bar v, r)+C r^{2+\delta},\\
&\text{for every $\bar v\in H^1(B_r)$ such that $\bar u-\bar v\in H^1_0(B_r)$ and $\bar v=0$ on $\R^2\setminus \mathcal H_{x_0}$.}\notag
\end{align} 
Moreover, there is a numerical constant $C_0>0$, such that
$$W_{\text{\tiny\sc op}}(\bar u_r)\le \begin{cases} W_{\text{\tiny\sc op}}(\bar z_r)+C_0(M_{\text{\tiny\sc A}}L^2+M_Q)C r^\delta,\\
W_{\text{\tiny\sc op}}(\bar h_r)+C_0(M_{\text{\tiny\sc A}}L^2+M_Q)C r^\delta,
\end{cases}$$
for every $r\in(0,r_0)$, where $C$ is the constant from \eqref{e:almost-minimality_in_x_0}, $\bar u_r(x):=\frac1r \bar u(rx)$, $\bar z_r$ is the one homogeneous extension of $\bar u_r$ in $B_1$, $\bar h_r$ is the competitor given by Theorem \ref{t:epi1} and $\Lambda = Q_{\text{\tiny\sc op}}(x_0)$ in \eqref{e:weiss1}. 
\end{lemma}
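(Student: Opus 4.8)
The plan is to establish the two assertions of Lemma~\ref{l:change_of_variables} in the order they are stated: first the almost-minimality of $\bar u$ for the frozen-coefficient functional $J_{\text{\tiny\sc op}}^{x_0}$, and then, as a consequence, the two inequalities comparing $W_{\text{\tiny\sc op}}(\bar u_r)$ with $W_{\text{\tiny\sc op}}(\bar z_r)$ and $W_{\text{\tiny\sc op}}(\bar h_r)$. For the first part, I would start from the change of variables $x = F_{x_0}(y) = x_0 + A_{x_0}^{1/2} y$. Since $\det A_{x_0}^{1/2} = (\det A_{x_0})^{1/2}$ is a constant (independent of $y$), the Jacobian factor is harmless, and the chain rule gives $\nabla_y(u\circ F_{x_0}) = (A_{x_0}^{1/2})^t (\nabla u)\circ F_{x_0} = A_{x_0}^{1/2}(\nabla u)\circ F_{x_0}$, so that $|\nabla \bar u(y)|^2 = \nabla u(x) \cdot A_{x_0} \nabla u(x)$ at $x = F_{x_0}(y)$. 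Thus the Dirichlet part of $J_{\text{\tiny\sc op}}^{x_0}(\bar u, r)$ equals, up to the constant Jacobian, the integral of $\nabla u \cdot A_{x_0}\nabla u$ over the ellipsoid $F_{x_0}(B_r)$, which I want to compare with $J_{\text{\tiny\sc op}}(u, x_0, \rho)$ over a genuine ball $B_\rho(x_0)$. Here the core estimate is that $A(x)$ and the frozen $A_{x_0}$ differ by at most $C_{\text{\tiny\sc A}}|x-x_0|^{\delta_{\text{\tiny\sc A}}} \le C_{\text{\tiny\sc A}}(M_{\text{\tiny\sc A}}^{1/2}r)^{\delta_{\text{\tiny\sc A}}}$ on $F_{x_0}(B_r)$, and similarly $|Q_{\text{\tiny\sc op}}(x) - Q_{\text{\tiny\sc op}}(x_0)| \le C_{\text{\tiny\sc Q}} r^{\delta_{\text{\tiny\sc Q}}}$; because $u$ is $L$-Lipschitz, $|\nabla u|^2 \le L^2$, so replacing $A$ by $A_{x_0}$ (and $Q_{\text{\tiny\sc op}}$ by $Q_{\text{\tiny\sc op}}(x_0)$) in $J_{\text{\tiny\sc op}}$ introduces only an $O((M_{\text{\tiny\sc A}}L^2+C_{\text{\tiny\sc Q}})r^{2+\delta})$ additive error and an $O(r^\delta)$ multiplicative error on the whole energy, which is of size $O(r^2)$.

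The logical chain for the first part is: given a competitor $\bar v$ for $\bar u$ on $B_r$ (agreeing with $\bar u$ outside $B_r$ and vanishing outside $\mathcal H_{x_0}$), push it forward by $F_{x_0}$ to a competitor $v := \bar v \circ F_{x_0}^{-1}$ for $u$ on the ellipsoid $F_{x_0}(B_r)$; sandwich that ellipsoid between two concentric balls $B_{r_-}(x_0) \subset F_{x_0}(B_r) \subset B_{r_+}(x_0)$ with $r_\pm = (1 + O(r^{\delta_{\text{\tiny\sc A}}}))r$ (using that $A_{x_0}^{1/2}$ is a $C_{\text{\tiny\sc A}}$-Hölder perturbation of a fixed matrix with eigenvalues in $[M_{\text{\tiny\sc A}}^{-1/2}, M_{\text{\tiny\sc A}}^{1/2}]$ — actually the eigenvalue spread alone gives comparability of the balls up to the fixed ratio $M_{\text{\tiny\sc A}}$, but localizing at $x_0\in\partial\Omega_u$ with $r$ small one can do better, or one simply absorbs the ratio into the constants); extend $v$ to agree with $u$ on the annulus $B_{r_+}(x_0)\setminus F_{x_0}(B_r)$ so it becomes an admissible competitor for the genuine almost-minimality inequality \eqref{e:almost1} of $u$ on $B_{r_+}(x_0)$; apply \eqref{e:almost1}; and finally translate back through the freezing estimate of the previous paragraph and the change of variables, collecting all the $r^\delta$ and $r^{2+\delta}$ errors with $\delta = \min\{\delta_{\text{\tiny\sc A}}, \delta_{\text{\tiny\sc Q}}, \delta_1\}$. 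One must check that the half-plane constraint is respected: $\bar v = 0$ outside $\mathcal H_{x_0}$ translates to $v = 0$ outside $F_{x_0}(\mathcal H_{x_0}) = \{x : (x - x_0)\cdot e_2 > 0\}$ — wait, $\mathcal H_{x_0} = \{x : F_{x_0}(x)\cdot e_2 > 0\}$ and $F_{x_0}(x)\cdot e_2 = x_0\cdot e_2 + (A_{x_0}^{1/2}x)\cdot e_2$, so $F_{x_0}(\mathcal H_{x_0})$ is exactly the half-space $\{y : y\cdot e_2 > 0\}$ up to the shift by $x_0\cdot e_2$; since $u\in\mathcal A^+(B_2)$ vanishes on $B_2\setminus H$ and one needs to re-express the upper half-plane condition after the affine change — this bookkeeping (making sure $F_{x_0}$ sends the half-plane where $\bar v$ must vanish onto $\{y\cdot e_2 > 0\}$, which is where $u$ is allowed to be positive) is the delicate structural point and the reason the functions $F_{x_0}$ and $\mathcal H_{x_0}$ were defined the way they were; once it is set up correctly the constraint is preserved automatically.

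For the second part, I would expand $W_{\text{\tiny\sc op}}(\bar u_r)$ directly. By definition $W_{\text{\tiny\sc op}}(\bar u_r) = \int_{B_1}|\nabla \bar u_r|^2 - \int_{\partial B_1}\bar u_r^2\,d\mathcal H^1 + \Lambda|\{\bar u_r > 0\}\cap B_1|$ with $\Lambda = Q_{\text{\tiny\sc op}}(x_0)$, and after rescaling $\bar u_r(x) = \bar u(rx)/r$ the first and third terms together are $r^{-2} J_{\text{\tiny\sc op}}^{x_0}(\bar u, r)$. The key identities \eqref{e:der1} and Lemma~\ref{l:mono} require the comparison $W_{\text{\tiny\sc op}}(\bar u_r) \le W_{\text{\tiny\sc op}}(\bar z_r) + (\text{error})$, which one obtains by using $\bar z_r$ (one-homogeneous extension of the trace of $\bar u_r$) as a competitor: rescaling it back to scale $r$ gives an admissible $\bar v$ in \eqref{e:almost-minimality_in_x_0} (it agrees with $\bar u$ on $\partial B_r$ and vanishes outside $\mathcal H_{x_0}$ whenever $\bar u_r$ does, since the homogeneous extension preserves the support cone — and if $x_0$ is a free boundary point the trace may touch the obstacle but the one-homogeneous extension of a function vanishing outside a half-plane still vanishes outside that half-plane), so \eqref{e:almost-minimality_in_x_0} yields $J_{\text{\tiny\sc op}}^{x_0}(\bar u, r) \le (1 + Cr^\delta)J_{\text{\tiny\sc op}}^{x_0}(\bar z_r(\cdot/r), r) + Cr^{2+\delta}$; dividing by $r^2$, noting that $\int_{\partial B_1}\bar u_r^2 = \int_{\partial B_1}(\bar z_r)^2$ (same trace) and that $r^{-2}J_{\text{\tiny\sc op}}^{x_0}$ of the rescaled homogeneous extension is bounded by a numerical constant times $\|\bar u_r\|_{H^1}^2 \lesssim M_{\text{\tiny\sc A}}L^2 + M_Q$ (Lipschitz bound plus the energy competitor $\bar z_r$ having controlled norm), the multiplicative $Cr^\delta$ error becomes an additive $C_0(M_{\text{\tiny\sc A}}L^2 + M_Q)Cr^\delta$ and we are done; the $\bar h_r$ case is identical, using that $\bar h_r$ rescaled is also admissible in \eqref{e:almost-minimality_in_x_0} (property (b) of Theorem~\ref{t:epi1} guarantees $\bar h_r = 0$ outside the relevant half-plane whenever $\bar z_r$ is) and that $\|\bar h_r\|_{H^1(B_1)} \le C\|\bar u_r\|_{H^1(\partial B_1)} \lesssim (M_{\text{\tiny\sc A}}L^2 + M_Q)^{1/2}$ by property (a).

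The main obstacle I expect is the first part — the careful ball-versus-ellipsoid comparison and the verification that the half-plane constraint is faithfully transported by $F_{x_0}$ and its inverse — together with tracking that every error term genuinely has the claimed exponents ($r^\delta$ multiplicative, $r^{2+\delta}$ additive) and does not, for instance, degrade because the inner/outer ball radii $r_\pm$ differ from $r$ by a multiplicative constant rather than by $1 + O(r^{\delta})$; handling that likely needs an extra argument exploiting that $x_0 \in \partial\Omega_u$ and $u$ vanishes (with its gradient) to leading order near $x_0$, so that the energy on the thin annulus $B_{r_+}(x_0)\setminus B_{r_-}(x_0)$ is itself of lower order, or else one keeps the constant ratio and absorbs it, accepting that the "comparison of balls" is only up to the fixed factor $M_{\text{\tiny\sc A}}$ and redoing the almost-minimality bookkeeping accordingly. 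The second part is then essentially algebraic once the first part and the admissibility of $\bar z_r$, $\bar h_r$ as competitors are in hand.
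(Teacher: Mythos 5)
Your proposal is correct and follows essentially the same route as the paper: freeze the coefficients via the H\"older continuity of $A$ and $Q$ together with the Lipschitz bound on $u$, change variables by $F_{x_0}$ (constant Jacobian, $|\nabla\bar u|^2=\nabla u\cdot A_{x_0}\nabla u$, and $F_{x_0}$ carrying $\mathcal H_{x_0}$ onto $H$), then rescale and insert $\bar z_r$, $\bar h_r$ as competitors, using Theorem \ref{t:epi1}(a) and the cancellation of the common boundary integral to turn the multiplicative $Cr^\delta$ error into the additive $C_0(M_{\text{\tiny\sc A}}L^2+M_Q)Cr^\delta$. The ``main obstacle'' you flag is resolved by the second of your two options: the paper applies the almost-minimality of $u$ on the ball $B_\rho(x_0)\supset F_{x_0}(B_r)$ with $\rho=M_{\text{\tiny\sc A}}^{\sfrac12}r$ (a fixed-ratio enlargement, not $1+O(r^\delta)$), lets the contributions of the region where $u=v$ cancel, and absorbs the leftover $Cr^\delta\cdot O(r^2)$ term — controlled purely by the Lipschitz bound, with no need for $u$ to vanish to leading order near $x_0$ — into $Cr^{2+\delta}$.
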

\begin{proof}
Let $x_0\in\partial \Omega_u\cap B_1$, $r>0$ and $\rho=M_{\text{\tiny\sc A}}^{\sfrac12}r$ and notice that this implies $F_{x_0}(B_r)\subset B_\rho(x_0)$. Let $\bar u=u \circ F_{x_0}$ and $\bar v=v\circ F_{x_0}$. Then, the H\"older continuity of $A$ and $Q:=Q_{\text{\tiny\sc op}}$ and the ellipticity of $A$ give
\begin{align*}
\tilde{J}_{\text{\tiny\sc op}}(u,x_0,\rho) &:= \int_{B_\rho(x_0)}\Big(a_{ij}(x_0)\,\partial_i u\,\partial_j u + Q(x_0)\ind_{\{u>0\}}\Big)\,dx \leq J_{\text{\tiny\sc op}}(u,x_0,\rho) \\
& \qquad + C_{\text{\tiny\sc A}}M_{\text{\tiny\sc A}}\rho^{\delta_{\text{\tiny\sc A}}} \int_{B_\rho(x_0)}a_{ij}(x)\,\partial_i u\,\partial_j u\,dx + C_{\text{\tiny\sc Q}}M_{\text{\tiny\sc Q}}\rho^{\delta_{\text{\tiny\sc Q}}}\int_{B_\rho(x_0)}Q(x)\ind_{\{u>0\}}\,dx\\ 
&\leq (1+Cr^\delta) J_{\text{\tiny\sc op}}(u,x_0,\rho),
\end{align*}
for some positive constant $C>0$.
Analogously, we get the following estimate from below:
\begin{equation}\label{e:esti J coeff cst}
\tilde{J}_{\text{\tiny\sc op}}(v,x_0,\rho) \geq (1-Cr^\delta) J_{\text{\tiny\sc op}}(v,x_0,\rho).
\end{equation}
Putting the two estimates together and using the almost-minimality of $u$, we get 
\[
\tilde{J}_{\text{\tiny\sc op}}(u,x_0,\rho) \leq 
\frac{1+Cr^\delta}{1-Cr^\delta}(1+C_1\rho^{\delta_1}) \tilde{J}_{\text{\tiny\sc op}}(v,x_0,\rho) + C_1(1+Cr^\delta)\rho^{2+\delta_1}.
\]
Now, notice that by the choice of the function $F_{x_0}$ we have the identity
\[
|\nabla \bar{u}|^2(x) = a_{ij}(x_0)\,\partial_i u(F_{x_0}(x))\,\partial_j u(F_{x_0}(x)), \qquad x \in B_{\text{\tiny\sc M}_{\text{\tiny\sc A}}^{-\sfrac12}}.
\]
Therefore, a change of coordinates and the estimate \eqref{e:esti J coeff cst} give
\begin{align*}
&\int_{F_{x_0}^{-1}(B_\rho(x_0))}\Big(|\nabla\bar{u}|^2+Q(x_0)\ind_{\{\bar{u}>0\}}\Big)\,dx = \det \big(A_{x_0}^{-\sfrac12}\big) \tilde{J}_{\text{\tiny\sc op}}(u,x_0,\rho) \\
&\leq (1+Cr^\delta)\int_{F_{x_0}^{-1}(B_\rho(x_0))}\Big(|\nabla\bar{v}|^2+Q(x_0)\ind_{\{\bar{v}>0\}}\Big)\,dx +Cr^{2+\delta},
\end{align*}
for some other positive constant $C>0$. Finally, observing that $B_r \subset F_{x_0}^{-1}(B_\rho(x_0))$ we get
\[ J_{\text{\tiny\sc op}}^{x_0}(\bar u,r) \leq (1+Cr^\delta)J_{\text{\tiny\sc op}}^{x_0}(\bar v,r) + Cr^{2+\delta} + Cr^\delta J_{\text{\tiny\sc op}}^{x_0}(\bar u, \text{\tiny\sc M}_{\text{\tiny\sc A}}^{-\sfrac12} r),
\]
which gives \eqref{e:almost-minimality_in_x_0} since $\bar{u}$ is Lipschitz continuous with Lipschitz constant $\|\nabla \bar{u}\|_{L^\infty}=M_{\text{\tiny \sc A}}^{\sfrac12}L$.

We next notice that we have the scaling
$$J_{\text{\tiny\sc op}}^{x_0}(\bar u_r, 1)=\frac1{r^2}J_{\text{\tiny\sc op}}^{x_0}(\bar u, r).$$
Thus, the almost-minimality inequality \eqref{e:almost-minimality_in_x_0} translates in  
\begin{equation}\label{e:sto_gia_dormendo}
J_{\text{\tiny\sc op}}^{x_0}(\bar u_r, 1)\le (1+Cr^\delta)J_{\text{\tiny\sc op}}^{x_0}(\bar v_r, 1)+C r^{\delta}.
\end{equation} 
Let $C_{\text{\tiny\sc E}}>0$ be the constant from Theorem \ref{t:epi1}. Then, since $\bar u$ is Lipschitz continuous, we have 
$$\displaystyle \int_{B_1}|\nabla \bar h_r|^2\,dx\le C_{\text{\tiny\sc E}}\int_{\partial B_1}\big(|\nabla \bar u_r|^2+\bar u_r^2\big)\,dx\le C_0M_{\text{\tiny\sc A}}L^2,$$ 
where $C_0$ is a numerical constant and $\bar h_r$ is the competitor from Theorem \ref{t:epi1}. Taking $\bar h_r$ as a competitor in \eqref{e:sto_gia_dormendo}, we obtain 
\begin{align*}
J_{\text{\tiny\sc op}}^{x_0}(\bar u_r, 1)&\le J_{\text{\tiny\sc op}}^{x_0}(\bar h_r, 1)+Cr^\delta \Big(\int_{B_1}|\nabla \bar h_r|^2\,dx +Q(x_0)|B_1|\Big)+C r^{\delta}\\
&\le J_{\text{\tiny\sc op}}^{x_0}(\bar h_r, 1)+Cr^\delta \Big(C_0 M_{\text{\tiny\sc A}} L^2 + M_{\text{\tiny\sc Q}}|B_1|\Big)+C r^{\delta},
\end{align*} 
which concludes the proof, the case $\bar v_r=\bar z_r$ being analogous.
\end{proof} 
An analogous result, with essentially the same proof holds in the two-phase case. 
\begin{lemma}\label{l:change_of_variables2}
Let $L>0$. There are constants $C>0$ and $r_0\in(0,1)$ (depending only on $C_{\text{\tiny \sc A}}$, $C_{\text{\tiny \sc Q}}$, $M_{\text{\tiny \sc A}}$, $M_{\text{\tiny \sc Q}}, \delta_{\text{\tiny \sc A}},\delta_{\text{\tiny \sc Q}},\delta_1,C_2$ and $L$) and $\delta=\min\{\delta_{\text{\tiny \sc A}}, \delta_{\text{\tiny \sc Q}},\delta_2\}$ such that: if $u\in H^1(B_1)$ is a $L$-Lipschitz continuous function and a almost-minimizer of $J_{\text{\tiny\sc tp}}$ in $B_2$, $x_0\in B_{r_0}\cap\partial \Omega_u$ and $\bar u=u\circ F_{x_0}$, then we have that for every $r\in(0,r_0)$, 
\begin{align}\label{e:almost-minimality_in_x_0_due}
J_{\text{\tiny\sc tp}}^{x_0}(\bar u, r)&\le (1+Cr^\delta)J_{\text{\tiny\sc tp}}^{x_0}(\bar v, r)+C r^{2+\delta},\\
&\text{for every $\bar v\in H^1(B_r)$ such that $\bar u-\bar v\in H^1_0(B_r)$.}\notag
\end{align} 
Moreover, there is a numerical constant $C_0>0$ such that
$$W_{\text{\tiny\sc tp}}(\bar u_r)\le \begin{cases} W_{\text{\tiny\sc tp}}(\bar z_r)+C_0(M_{\text{\tiny \sc A}} L^2+M_Q)C r^\delta,\\
W_{\text{\tiny\sc tp}}(\bar h_r)+C_0(M_{\text{\tiny \sc A}} L^2+M_Q)C r^\delta,
\end{cases}$$
 for every $r\in(0,r_0)$, where $C$ is the constant from \eqref{e:almost-minimality_in_x_0_due}, $\bar u_r(x):=\frac1r \bar u(rx)$, $\bar z_r$ is the one homogeneous extension of $\bar u_r$ in $B_1$, $\bar h_r$ is the competitor given by Theorem \ref{t:epi2} and $\Lambda_1 = Q_{\text{\tiny\sc tp}}^+(x_0)$, $\Lambda_2 = Q_{\text{\tiny\sc tp}}^-(x_0)$ in \eqref{e:weiss2}. 
\end{lemma}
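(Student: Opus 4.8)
The plan is to mimic, essentially verbatim, the proof of Lemma~\ref{l:change_of_variables}, adapting only the parts where the one-phase structure was used. First I would fix $x_0\in B_{r_0}\cap\partial\Omega_u$, set $\rho=M_{\text{\tiny\sc A}}^{\sfrac12}r$, so that $F_{x_0}(B_r)\subset B_\rho(x_0)$, and introduce $\bar u=u\circ F_{x_0}$, $\bar v=v\circ F_{x_0}$. Using the $\delta_{\text{\tiny\sc A}}$-H\"older continuity of the $a_{ij}$ and the $\delta_{\text{\tiny\sc Q}}$-H\"older continuity of $Q_{\text{\tiny\sc tp}}^+,Q_{\text{\tiny\sc tp}}^-$, together with the ellipticity bounds $M_{\text{\tiny\sc A}}^{-1}|\xi|^2\le\xi\cdot A(x)\xi\le M_{\text{\tiny\sc A}}|\xi|^2$, one replaces the variable-coefficient functional $J_{\text{\tiny\sc tp}}(\cdot,x_0,\rho)$ by the frozen one
$$\tilde J_{\text{\tiny\sc tp}}(w,x_0,\rho):=\int_{B_\rho(x_0)}\Big(a_{ij}(x_0)\,\partial_i w\,\partial_j w+Q_{\text{\tiny\sc tp}}^+(x_0)\ind_{\{w>0\}}+Q_{\text{\tiny\sc tp}}^-(x_0)\ind_{\{w<0\}}\Big)\,dx$$
at the cost of multiplicative factors $(1\pm Cr^\delta)$, exactly as in the one-phase case; the only new term is $Q_{\text{\tiny\sc tp}}^-(x)\ind_{\{w<0\}}$, which is estimated identically to the $Q_{\text{\tiny\sc tp}}^+(x)\ind_{\{w>0\}}$ term since both $Q_{\text{\tiny\sc tp}}^\pm$ are H\"older and bounded below. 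Combining the upper bound for $u$, the lower bound \eqref{e:esti J coeff cst}-analogue for $v$, and the almost-minimality \eqref{e:almost2}, I get
$$\tilde J_{\text{\tiny\sc tp}}(u,x_0,\rho)\le\frac{1+Cr^\delta}{1-Cr^\delta}(1+C_2\rho^{\delta_2})\,\tilde J_{\text{\tiny\sc tp}}(v,x_0,\rho)+C_2(1+Cr^\delta)\rho^{2+\delta_2}.$$

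Next I would perform the change of variables $x\mapsto F_{x_0}(x)$. The defining identity $|\nabla\bar u|^2(x)=a_{ij}(x_0)\,\partial_iu(F_{x_0}(x))\,\partial_ju(F_{x_0}(x))$ (valid on $B_{M_{\text{\tiny\sc A}}^{-\sfrac12}}$, and note that $F_{x_0}$ is affine so its Jacobian is the constant $\det(A_{x_0}^{-\sfrac12})$, which cancels on both sides) converts $\tilde J_{\text{\tiny\sc tp}}(u,x_0,\rho)$ into $\det(A_{x_0}^{-\sfrac12})^{-1}\int_{F_{x_0}^{-1}(B_\rho(x_0))}(|\nabla\bar u|^2+Q_{\text{\tiny\sc tp}}^+(x_0)\ind_{\{\bar u>0\}}+Q_{\text{\tiny\sc tp}}^-(x_0)\ind_{\{\bar u<0\}})\,dx$, and similarly for $\bar v$. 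Since $B_r\subset F_{x_0}^{-1}(B_\rho(x_0))$, discarding the extra (nonnegative) contribution on the annular region $F_{x_0}^{-1}(B_\rho(x_0))\setminus B_r$ for the left side and bounding it by $Cr^\delta J_{\text{\tiny\sc tp}}^{x_0}(\bar u,M_{\text{\tiny\sc A}}^{-\sfrac12}r)$ on the right (using $\|\nabla\bar u\|_{L^\infty}=M_{\text{\tiny\sc A}}^{\sfrac12}L$, so this term is $\le CLr^\delta\cdot r^2\le Cr^{2+\delta}$ after absorbing the measure terms), I obtain \eqref{e:almost-minimality_in_x_0_due}. The only genuine difference from the one-phase proof is that the admissible competitors $\bar v$ are no longer constrained to vanish outside a half-plane $\mathcal H_{x_0}$ (there is no constraint in \eqref{e:almost2}), which actually simplifies matters: the statement of \eqref{e:almost-minimality_in_x_0_due} asks only $\bar u-\bar v\in H^1_0(B_r)$, with no half-plane condition, so no version of property~(b) of Theorem~\ref{t:epi1} is needed here.

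For the second, Weiss-energy part, I would use the scaling $J_{\text{\tiny\sc tp}}^{x_0}(\bar u_r,1)=r^{-2}J_{\text{\tiny\sc tp}}^{x_0}(\bar u,r)$ to rewrite \eqref{e:almost-minimality_in_x_0_due} as $J_{\text{\tiny\sc tp}}^{x_0}(\bar u_r,1)\le(1+Cr^\delta)J_{\text{\tiny\sc tp}}^{x_0}(\bar v_r,1)+Cr^\delta$, then test with $\bar v_r=\bar h_r$ (the competitor from Theorem~\ref{t:epi2}) and with $\bar v_r=\bar z_r$ (the one-homogeneous extension of the trace of $\bar u_r$). In both cases the extra error term is $Cr^\delta(\int_{B_1}|\nabla\bar v_r|^2\,dx+(Q_{\text{\tiny\sc tp}}^+(x_0)+Q_{\text{\tiny\sc tp}}^-(x_0))|B_1|)$; the Dirichlet term is controlled by $C_0M_{\text{\tiny\sc A}}L^2$ using the $H^1$-bound $\|\bar h_r\|_{H^1(B_1)}\le C\|\bar u_r\|_{H^1(\partial B_1)}$ from Theorem~\ref{t:epi2} (for $\bar h_r$) or directly by Lipschitz continuity of $\bar u_r$ (for $\bar z_r$), and the potential term by $C_0M_Q|B_1|$. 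Noting that $W_{\text{\tiny\sc tp}}(\bar u_r)$, $W_{\text{\tiny\sc tp}}(\bar h_r)$, $W_{\text{\tiny\sc tp}}(\bar z_r)$ differ from $J_{\text{\tiny\sc tp}}^{x_0}(\cdot,1)$ only by the common boundary term $-\int_{\partial B_1}\bar u_r^2\,d\mathcal H^1$ (since all three agree with $\bar u_r$ on $\partial B_1$), which therefore drops out of the inequality, this yields both claimed bounds. I do not expect any real obstacle here: the argument is structurally identical to Lemma~\ref{l:change_of_variables}, the negative-phase term behaves exactly like the positive one, and the absence of the geometric constraint only removes steps. The one small point to keep track of is making sure the constants' dependence list matches the one advertised in the statement, which is routine bookkeeping.
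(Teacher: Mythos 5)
Your proposal is correct and follows exactly the route the paper intends: the paper gives no separate proof of this lemma, stating only that it holds ``with essentially the same proof'' as Lemma \ref{l:change_of_variables}, and your adaptation (treating the $Q_{\text{\tiny\sc tp}}^-(x)\ind_{\{w<0\}}$ term identically to the positive-phase term, dropping the half-plane constraint, and handling the annular error and the Weiss-energy rescaling as in the one-phase case) is precisely that argument.
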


\begin{remark}[On the non-degeneracy]\label{rk nondeg}
In \cite{dt} David and Toro proved that Lipschitz continuous almost-minimizers to the one-phase and the two-phase functionals for the Laplacian are non degenerate (see \cite[Theorem 10.1]{dt}). Note that their definition of almost-minimizer is slightly different from ours. However, their proof still holds in our case with small changes which come from the additional term $Cr^{2+\delta}$ of our definition. It follows from Lemma \ref{l:change_of_variables} and Lemma \ref{l:change_of_variables2} that if $u$ is a almost-minimizer of the functional $J_{\text{\tiny\sc op}}$ (resp. $u$ is a almost-minmizer of $J_{\text{\tiny\sc tp}}$) then $u$ (resp. $u_\pm$) is non-degenerate with respect to $A$ in the sense of the following definition.
\end{remark}

\begin{definition}[Non-degeneracy]\label{d:non-degeneracy}
	Let $d\ge 2$ and $A:\R^d\to Sym_d^+$ be a given function.
	We say that the non-negative function $u\in H^1(B_2)$ is non-degenerate (with respect to $A$), if there are constants $\eta>0$, $\eps\in(0,1)$ and $r_0>0$ such that, for every $x_0\in B_1$ and $r\in(0,r_0)$, the following implication holds:
	$$\int_{\partial B_r}u\circ F_{x_0}\,d\HH^{d-1}< \eta\, r^d\qquad\Rightarrow\qquad u\circ F_{x_0}\equiv 0\quad\text{in}\quad B_{\eps r}(x_0),$$
	where $F_{x_0}(x):=x_0+A_{x_0}^{\sfrac12}(x)$.
\end{definition}

\section{Blow-up sequences and blow-up limits}\label{s:blow-up}
Let $u\in H^1(B_2)$ be a Lipschitz continuous function. Let $(x_n)_{n\ge 1}$ be a sequence of points in $B_1\cap\partial\Omega_u$ converging to some $ x_0\in B_1\cap\partial\Omega_u$, and $(r_n)_{n\ge 1}$ be an infinitesimal sequence in $(0,1)$. 
Then, the sequence $u_{x_n,r_n}$ is uniformly Lipschitz in every compact subset of $\R^2$. Thus, up to extracting a subsequence, there is a Lipschitz function $u_0:\R^2\to\R$ such that 
\begin{equation}\label{e:uniform}
\lim_{n\rightarrow \infty}u_{x_n,r_n} = u_{0},
\end{equation}
where $u_{r_n,x_n}$ is defined in \eqref{e:resc} and the convergence is uniform on every compact subset of $\R^2$. 
\begin{definition}
If \eqref{e:uniform} holds, we will say that $u_{x_n,r_n}$ is a blow up sequence (with fixed center, if $x_n=x_0$, for every $n\ge 1$). If the center is fixed, we will say that $u_0$ is a blow-up limit at $x_0$.  
\end{definition}
We summarize the main properties of the blow-up sequences and the blow-up limits in the following two propositions. We notice that Proposition \ref{p:convergence} holds in every dimension $d\ge 2$, while Proposition \ref{p:classification} is known to hold only for $2\le d\le 4$. 
\begin{proposition}[Convergence of the blow-up sequences]\label{p:convergence}
Let $u \in H^1(B_2)$ be as in Theorem \ref{t:constrained} or Theorem \ref{t:two-phase} and let $u_n:=u_{r_n,x_n}$ be a blow-up sequence converging to some $u_0\in H^1_{loc}(\R^2)$. Then: 
\begin{enumerate}
	\item the sequence $u_{n}$ converges strongly to $u_{0}$ in $H^1_{\text{loc}}(\R^2)$;
	\item the sequences of characteristic functions $\ind_{\{u_n>0\}}$ and $\ind_{\{u_n<0\}}$ converge  in $L^1_{\text{loc}}(\R^2)$ to $\ind_{\{u_{0}>0\}}$ and $\ind_{\{u_{0}<0\}}$, respectively.
\end{enumerate}
\end{proposition}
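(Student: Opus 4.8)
The plan is to exploit the almost-minimality together with the freezing lemmas of Section \ref{s:freezing} to upgrade the uniform convergence \eqref{e:uniform} to strong $H^1_{\text{loc}}$ convergence, and then to deduce the convergence of the characteristic functions from the convergence of the energies. I would work in the frozen coordinates: set $\bar u^n := u_{x_n,r_n}\circ F_{x_0}$ (or more precisely compose with $F_{x_n}$ and use the Hölder continuity of $A$ so that $F_{x_n}\to F_{x_0}$), so that by Lemma \ref{l:change_of_variables} (resp. Lemma \ref{l:change_of_variables2}) each $\bar u^n$ is an almost-minimizer of the constant-coefficient functional $J^{x_0}_{\square}$ with error $Cr^\delta\to 0$. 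Since $u$ is Lipschitz, the $\bar u^n$ are uniformly Lipschitz, hence (by Ascoli plus weak compactness of the gradients) $\nabla\bar u^n\rightharpoonup \nabla\bar u^0$ weakly in $L^2_{\text{loc}}$ after passing to a subsequence, where $\bar u^0$ is the (suitably transformed) limit $u_0$.

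The core step is to show $\int_{B_R}|\nabla\bar u^n|^2 \to \int_{B_R}|\nabla\bar u^0|^2$ for a.e.\ $R$, which combined with weak convergence gives strong $H^1_{\text{loc}}$ convergence. For the $\liminf$ inequality I would use weak lower semicontinuity of the Dirichlet energy. For the $\limsup$ inequality I would test the almost-minimality of $\bar u^n$ against the competitor obtained by gluing $\bar u^0$ inside $B_R$ to $\bar u^n$ on a thin annulus $B_{R+\rho}\setminus B_R$; the annular correction costs an amount controlled by $\rho^{-1}\int_{B_{R+\rho}\setminus B_R}|\bar u^n-\bar u^0|^2 + \rho\,(\text{gradient terms})$, which by the uniform Lipschitz bound and uniform convergence is $o(1)$ as $n\to\infty$ and then $\rho\to 0$ (this is where one chooses $R$ so that $\mathcal H^{1}(\partial B_R\cap\{\bar u^0=0\})$ behaves well, i.e.\ a.e.\ $R$). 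This yields $\limsup_n \int_{B_R}|\nabla\bar u^n|^2 + \Lambda|\{\bar u^n>0\}\cap B_R| \le \int_{B_R}|\nabla\bar u^0|^2 + \Lambda|\{\bar u^0>0\}\cap B_R|$ (and the analogous two-phase statement). Since uniform convergence already gives $\ind_{\{\bar u^n>0\}}\to\ind_{\{\bar u^0>0\}}$ a.e.\ on the open set $\{\bar u^0\neq 0\}$ and, by non-degeneracy (Remark \ref{rk nondeg}, Definition \ref{d:non-degeneracy}), $\bar u^n$ stays identically zero near points of the interior of $\{\bar u^0=0\}$, one gets $\liminf_n|\{\bar u^n>0\}\cap B_R|\ge |\{\bar u^0>0\}\cap B_R|$ (one only needs that the free boundary $\partial\Omega_{u_0}$ is Lebesgue-null, which follows from non-degeneracy and the density estimates). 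Feeding this back into the energy inequality forces both the measure terms and the Dirichlet terms to converge separately, giving (1) and (2) at once; transforming back by $F_{x_0}$ (a bi-Lipschitz map) preserves strong $H^1_{\text{loc}}$ convergence and $L^1_{\text{loc}}$ convergence of characteristic functions.

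The main obstacle is the $\limsup$ bound on the measure term $|\{\bar u^n>0\}\cap B_R|$: uniform convergence alone only controls the characteristic functions on the open sets $\{\bar u^0>0\}$ and $\operatorname{int}\{\bar u^0=0\}$, so one must rule out that $\{\bar u^n>0\}$ develops positive measure accumulating on $\partial\Omega_{u_0}$, and dually that $\{\bar u^n>0\}$ does not "thin out" where $\bar u^0>0$ — the first is handled by non-degeneracy (which is exactly why Remark \ref{rk nondeg} is invoked) and Lebesgue-nullity of the limiting free boundary, the second by the uniform convergence and the fact that $\{\bar u^0>0\}$ is open. The competitor/annulus construction is otherwise routine, but care is needed to keep the competitor admissible: in the constrained one-phase case it must vanish outside $\mathcal H_{x_0}$, which is automatic since both $\bar u^n$ and $\bar u^0$ do, and the annular cutoff preserves this.
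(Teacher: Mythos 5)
The paper does not actually write out a proof of Proposition \ref{p:convergence}: immediately after Proposition \ref{p:classification} it only remarks that the statement ``follows by a standard variational argument that only uses the almost-minimality of $u$'' and refers to \cite{altcaf} and \cite{mateve}. Your proposal reconstructs exactly that standard argument, in the same spirit as those references: uniform Lipschitz bounds give weak $H^1_{\mathrm{loc}}$ convergence, a cut-off/gluing competitor fed into the almost-minimality yields the $\limsup$ for the combined energy, weak lower semicontinuity plus Fatou give the two $\liminf$'s, and the squeeze forces the Dirichlet term and the measure term to converge separately, upgrading weak to strong $H^1$ convergence and giving the $L^1$ convergence of characteristic functions (using non-degeneracy for a.e.\ convergence of the indicators via Lebesgue-nullity of $\partial\Omega_{u_0}$). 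So the proposal is correct and is essentially the argument the paper points to.

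One small imprecision worth noting, though it does not create a gap: the $\liminf$ for $|\{\bar u^n>0\}\cap B_R|$ follows from uniform convergence and Fatou alone (on the open set $\{\bar u^0>0\}$ the indicators converge to $1$ pointwise), so it does not require Lebesgue-nullity of $\partial\Omega_{u_0}$. Non-degeneracy and the resulting density estimates are instead what you need for part (2), namely to get $\ind_{\{\bar u^n>0\}}\to \ind_{\{\bar u^0>0\}}$ a.e.\ (so that convergence of measures upgrades to $L^1$ convergence): uniform convergence handles $\{\bar u^0>0\}$ and $\{\bar u^0<0\}$, non-degeneracy forces $\bar u^n\equiv 0$ near the interior of $\{\bar u^0=0\}$, and the density estimates make the remaining set $\partial\{\bar u^0\neq 0\}$ negligible. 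The rest of the outline (freezing the coefficients, handling the moving centers $x_n$, keeping the competitor admissible under the half-plane constraint) is sound.
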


\begin{proposition}[Classification of the blow-up limits]\label{p:classification}
Let $u \in H^1(B_2)$ be as in Theorem \ref{t:constrained} or Theorem \ref{t:two-phase}. Let  $x_0\in\partial\Omega_u\cap B_1$ and $u_0\in H^1_{loc}(\R^2)$ be a blow-up limit of $u$ at $x_0$.
\smallskip
\begin{enumerate}
\item[{\bf \small (OP)}]  If $u$ is as in Theorem \ref{t:constrained} and $x_0\in \partial \Omega_u\cap B_1^+$, then $u_0$ is of the form 
\begin{equation}\label{e:blowup1}
u_0(x)=Q_{\text{\tiny\sc op}}^{\sfrac12}(x_0)\max\big\{0,x\cdot A_{x_0}^{-\sfrac12}[\nu]\big\}\,,\quad\text{where}\quad \nu\in\partial B_1.
\end{equation}
\item[{\bf \small (OP-c)}]  If $u$ is as in Theorem \ref{t:constrained} and $x_0\in \partial \Omega_u\cap\partial H\cap  B_1$, then $u_0$ is of the form 
\begin{equation}\label{e:blowup1c}
u_0(x)=\mu \max\big\{0,x\cdot A_{x_0}^{-\sfrac12}[\nu]\big\},
\end{equation}
where $\mu\ge Q_{\text{\tiny\sc op}}^{\sfrac12}(x_0)$ and $\nu\in\partial B_1$ is such that $A_{x_0}^{-\sfrac12}[\nu]$ is normal to $\partial H$ and pointing inwards. 
\item[{\bf \small (TP)}]  If $u$ is as in Theorem \ref{t:two-phase} and $x_0\in \partial \Omega_u^+\cap \partial \Omega_u^-\cap B_1$, then $u_0$ is of the form 
\begin{equation}\label{e:blowup2}
u_0(x)=\mu_+\max\big\{0,x\cdot A_{x_0}^{-\sfrac12}[\nu]\big\}+\mu_-\min\big\{0,x\cdot A_{x_0}^{-\sfrac12}[\nu]\big\},
\end{equation}
for some $\nu\in\partial B_1$ and some $\mu_+,\mu_->0$ such that $\mu_+^2-\mu_-^2=Q_{\text{\tiny\sc tp}}^+(x_0)-Q_{\text{\tiny\sc tp}}^-(x_0)$ and $\mu_+^2\geq Q_{\text{\tiny\sc tp}}^+(x_0)$, $\mu_-^2\geq Q_{\text{\tiny\sc tp}}^-(x_0)$.
\end{enumerate}
\end{proposition}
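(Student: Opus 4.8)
The plan is to reduce each of the three cases to a classification of homogeneous global minimizers (or non-degenerate homogeneous functions) for the constant-coefficient problem, using the machinery already assembled. First I would use Lemma \ref{l:change_of_variables} (resp.\ Lemma \ref{l:change_of_variables2}) to pass from $u$ to $\bar u = u\circ F_{x_0}$: under the affine change of coordinates $F_{x_0}(x)=x_0+A_{x_0}^{\sfrac12}x$ the operator becomes the Laplacian, and $\bar u$ satisfies the almost-minimality inequality \eqref{e:almost-minimality_in_x_0} for $J^{x_0}_{\text{\tiny\sc op}}$ (resp.\ \eqref{e:almost-minimality_in_x_0_due} for $J^{x_0}_{\text{\tiny\sc tp}}$), together with the comparison of Weiss energies $W_\square(\bar u_r)\le W_\square(\bar z_r)+Cr^\delta$ and $W_\square(\bar u_r)\le W_\square(\bar h_r)+Cr^\delta$. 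Note that a blow-up limit $u_0$ of $u$ at $x_0$ corresponds (after the same linear change of variables, which commutes with rescaling) to a blow-up limit $\bar u_0$ of $\bar u$ at $0$, and $u_0(x)=\bar u_0(A_{x_0}^{-\sfrac12}x)$; so it suffices to classify $\bar u_0$ for the constant-coefficient problem and then push forward, which is exactly why the matrix $A_{x_0}^{-\sfrac12}$ appears inside the arguments in \eqref{e:blowup1}--\eqref{e:blowup2}.

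Next I would establish that any such blow-up limit $\bar u_0$ is one-homogeneous. The comparison $W_\square(\bar u_r)\le W_\square(\bar z_r)+Cr^\delta$ together with Lemma \ref{l:mono} gives that $r\mapsto W_\square(\bar u_r)+\tfrac{Cd}{\delta}r^\delta$ is monotone, hence $\Theta_\square:=\lim_{r\to0}W_\square(\bar u_r)$ exists and is finite (here the Lipschitz bound on $\bar u$ plus non-degeneracy, via Remark \ref{rk nondeg}, control $W_\square(\bar u_r)$ from below). By the strong $H^1_{loc}$ convergence and convergence of the characteristic functions from Proposition \ref{p:convergence}, $W_\square$ passes to the limit along the blow-up sequence, so $W_\square((\bar u_0)_r)\equiv\Theta_\square$ is constant in $r$; feeding this into the differential identity \eqref{e:der1} (resp.\ \eqref{e:der2}) forces $E((\bar u_0)_r)\equiv0$, i.e.\ $x\cdot\nabla\bar u_0=\bar u_0$ a.e., which is exactly one-homogeneity.

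Then I would classify the one-homogeneous limits. In case {\bf(OP)}, $x_0$ is an interior free boundary point, so $\bar u_0$ is a non-negative one-homogeneous global almost-minimizer of $J^{x_0}_{\text{\tiny\sc op}}$; non-degeneracy rules out the trivial cone, and in the plane a non-negative one-homogeneous function that is a stationary point of the one-phase functional must be $Q_{\text{\tiny\sc op}}^{\sfrac12}(x_0)\max\{0,x\cdot\nu\}$ — this is the standard identification of one-homogeneous one-phase cones in $\R^2$ (the only admissible nonzero cross-section is a half-circle arc, with the right slope dictated by the optimality of the Weiss energy, equivalently $\Theta_{\text{\tiny\sc op}}=Q_{\text{\tiny\sc op}}(x_0)\tfrac\pi2$). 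In case {\bf(OP-c)}, the constrained boundary condition $\bar u_0=0$ outside a half-plane through $0$ with $\partial H$ now transformed by $A_{x_0}^{\sfrac12}$, combined with one-homogeneity, non-negativity, and non-degeneracy, leaves only the linear function $\mu\,\max\{0,x\cdot(\text{inward normal})\}$; the slope can now only be bounded below, $\mu\ge Q_{\text{\tiny\sc op}}^{\sfrac12}(x_0)$, since the constraint prevents the competitor $\bar h_r$ from lowering it further, which is captured by the energy inequality. In case {\bf(TP)}, $\bar u_0$ is a one-homogeneous two-phase function with both phases present; $(\bar u_0)_\pm$ are non-negative one-homogeneous with vanishing distributional Laplacian in their positivity sets (the right-hand sides $f_\pm$ scale away in the blow-up), hence each is a half-plane solution on the same line $\{x\cdot\nu=0\}$ (two-phase points in the plane have a single common tangent line), giving the two-slope form; the relations $\mu_+^2-\mu_-^2=Q^+_{\text{\tiny\sc tp}}(x_0)-Q^-_{\text{\tiny\sc tp}}(x_0)$ and $\mu_\pm^2\ge Q^\pm_{\text{\tiny\sc tp}}(x_0)$ come from optimality in the $J^{x_0}_{\text{\tiny\sc tp}}$-almost-minimality (outward variation of the interface) and from comparison with the competitor $\bar h_r$, respectively.

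The main obstacle I expect is the two-phase case: showing that the two one-phase pieces $(\bar u_0)_\pm$ share a single hyperplane interface and that neither phase degenerates. This requires combining the non-degeneracy of both $u_+$ and $u_-$ (Remark \ref{rk nondeg}) with a careful analysis of the Weiss density $\Theta_{\text{\tiny\sc tp}}=(\Lambda_1+\Lambda_2)\tfrac\pi2$ — the value forced by the epiperimetric inequality \eqref{eq epi DP} — to exclude the possibility that the blow-up is a pure one-phase cone or that the two positivity sets meet along something other than a hyperplane. The planar dimension is essential here, since the classification of one-homogeneous two-phase cones is only this clean in $\R^2$.
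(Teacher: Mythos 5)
Your proposal follows essentially the same route as the paper's proof: reduce to constant coefficients via $F_{x_0}$, obtain one-homogeneity of the blow-up from the Weiss almost-monotonicity and the identities \eqref{e:der1}--\eqref{e:der2} (this is exactly Lemma \ref{prop hom blowup}), classify the one-homogeneous global solutions in the plane (the paper does this by noting $\bar u_0$ is harmonic on its positivity cone, so its trace solves $-\partial_\theta^2\bar u_0=\bar u_0$ on the circle and the support is a union of arcs of length $\pi$), use non-degeneracy and minimality to exclude degenerate configurations, and determine the slopes by internal (domain) variations as in \cite{altcaf} and \cite{alcafr}. The only small inaccuracy is attributing the bounds $\mu_\pm^2\ge Q_{\text{\tiny\sc tp}}^\pm(x_0)$ to the epiperimetric competitor $\bar h_r$ — they come from inward variations of each phase separately, not from the epiperimetric inequality — but this does not affect the strategy.
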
		

The proof of Proposition \ref{p:convergence} follows by a standard variational argument that only uses the almost-minimality of $u$; for more details, we refer to \cite{altcaf} (see also \cite{mateve}). Proposition \ref{p:classification} follows by  the optimality of the blow-up limits and the Weiss' monotonicity formula (Lemma \ref{l:mono}). 
We will need the following definition.	
\begin{definition}[Global solutions]
Let $u:\R^2\to\R$, $u\in H^1_{loc}(\R^2)$ be given. 
\begin{enumerate}
\item[{\bf \small (OP)}] We say that $u$ is a global solution of the {\bf one-phase} Bernoulli problem, if: $u\ge 0$ and, for every ball $B:=B_R(x_0)\subset\R^2$, we have 
\begin{equation}\label{e:global1}\int_{B} |\nabla u|^2 \,dx + \Lambda|\{u>0\} \cap B| \leq \int_{B}|\nabla v|^2 \,dx + \Lambda |\{v>0\} \cap B|, 
\end{equation}
for every $v \in H^1(B)$ such that $u-v \in H^1_0(B)$.
\item[{\bf \small (OP-c)}] We say that 	$u$ is a global solution of the {\bf one-phase constrained} Bernoulli problem in $H$, if $u\ge 0$ on $H$, $u=0$ on $\R^2\setminus H$ and \eqref{e:global1} holds, for every ball $B:=B_R(x_0)\subset\R^2$ and every $v \in H^1(B)$ such that $u-v \in H^1_0(B)$ and $\{v>0\} \subset H$.
\item[{\bf \small (TP)}] We say that $u$ is a global solution of the {\bf two-phase} Bernoulli problem if, for every ball $B:=B_R(x_0)\subset\R^2$, we have 
\begin{equation}\label{e:global2}
\int_{B} \Big(|\nabla u|^2+\Lambda_1\ind_{\{u>0\}}+\Lambda_2\ind_{\{u<0\}}\Big)\,dx
 \le \int_{B} \Big(|\nabla v|^2+\Lambda_1\ind_{\{v>0\}}+\Lambda_2\ind_{\{v<0\}}\Big)\,dx, 
\end{equation}
for every $v \in H^1(B)$ such that $u-v \in H^1_0(B)$.
\end{enumerate}
\end{definition}	

\begin{lemma}[Optimality of the blow-up limits]\label{l:global}
Let $u \in H^1(B_2)$ be as in Theorem \ref{t:constrained} or Theorem \ref{t:two-phase} and let $u_n:=u_{r_n,x_0}$ be a blow-up sequence converging to the blow-up limit $u_0\in H^1_{loc}(\R^2)$. Then, we have:
\begin{enumerate}
\item[{\bf \small (OP)}] If $u$ is as in Theorem \ref{t:constrained} and $x_0\in \partial \Omega_u\cap B_1^+$, then $u_0\circ A_{x_0}^{\sfrac12}$ is a global solution of the one-phase problem with $\Lambda=Q_{\text{\tiny\sc op}}(x_0)$. 
\item[{\bf \small (OP-c)}] If $u$ is as in Theorem \ref{t:constrained} and $x_0\in \partial \Omega_u\cap \partial H\cap B_1$, then, up to a rotation, $u_0\circ A_{x_0}^{\sfrac12}$ is a global solution of the constrained one-phase problem with $\Lambda=Q_{\text{\tiny\sc op}}(x_0)$. 
\item[{\bf \small (TP)}] If $u$ is as in Theorem \ref{t:two-phase} and $x_0\in \partial \Omega_u^+\cap \partial \Omega_u^-\cap B_1$, then $u_0\circ A_{x_0}^{\sfrac12}$ is a global solution of the two-phase problem with $\Lambda_1=Q_{\text{\tiny\sc tp}}^+(x_0)$ and $\Lambda_2=Q_{\text{\tiny\sc tp}}^-(x_0)$.  
\end{enumerate}
\end{lemma}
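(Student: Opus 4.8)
The plan is to pass the almost-minimality of $u$ in the variable-coefficient functional $J_{\text{\tiny\sc op}}$ (resp. $J_{\text{\tiny\sc tp}}$) to the limit through the blow-up rescaling, using the change of variables $F_{x_0}$ to freeze the coefficients. First I would fix $x_0\in\partial\Omega_u$ of the appropriate type and set $\bar u := u\circ F_{x_0}$, and consider the sequence $\bar u_{r_n}(x) = \frac1{r_n}\bar u(r_n x)$. Since $F_{x_0}$ is an affine map whose linear part is $A_{x_0}^{\sfrac12}$, and since $u_{x_n,r_n}\to u_0$ uniformly on compact sets with $x_n=x_0$, one checks that $\bar u_{r_n} = u_{x_0,r_n}\circ A_{x_0}^{\sfrac12}$ converges uniformly on compact sets to $u_0\circ A_{x_0}^{\sfrac12}$; call this limit $w$. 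By Proposition \ref{p:convergence}, this convergence is in fact strong in $H^1_{loc}(\R^2)$, and the characteristic functions $\ind_{\{\bar u_{r_n}>0\}}$, $\ind_{\{\bar u_{r_n}<0\}}$ converge in $L^1_{loc}$ to those of $w$.

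Next I would use Lemma \ref{l:change_of_variables} (resp. Lemma \ref{l:change_of_variables2}): by the scaling $J^{x_0}_{\text{\tiny\sc op}}(\bar u_r,1) = r^{-2}J^{x_0}_{\text{\tiny\sc op}}(\bar u,r)$, inequality \eqref{e:almost-minimality_in_x_0} becomes, after rescaling to the ball $B_R(y_0)$ (any fixed ball in $\R^2$), the statement that for $n$ large enough and any admissible competitor $\bar v$ with $\bar u_{r_n} - \bar v\in H^1_0(B_R(y_0))$ (and, in case \textbf{(OP-c)}, $\{\bar v>0\}\subset \mathcal H_{x_0}$, which in the blow-up limit becomes the half-plane $H$ up to a rotation),
\[
\int_{B_R(y_0)}\!\!\Big(|\nabla \bar u_{r_n}|^2 + Q(x_0)\ind_{\{\bar u_{r_n}>0\}}\Big)dx \le (1 + C(R r_n)^\delta)\!\int_{B_R(y_0)}\!\!\Big(|\nabla \bar v|^2 + Q(x_0)\ind_{\{\bar v>0\}}\Big)dx + C (R r_n)^{2+\delta}.
\]
Given a fixed competitor $v$ for $w$ on $B_R(y_0)$ with $w - v\in H^1_0(B_R(y_0))$, I would build a recovery sequence $v_n$ with $\bar u_{r_n} - v_n\in H^1_0(B_R(y_0))$, $v_n\to v$ strongly in $H^1$, and $\ind_{\{v_n>0\}}\to\ind_{\{v>0\}}$ in $L^1$ — the standard device is to interpolate: set $v_n = v$ on $B_{R-\rho}(y_0)$ and glue linearly to $\bar u_{r_n}$ on the annulus $B_R\setminus B_{R-\rho}$, then let $\rho\to0$ after $n\to\infty$, using that $\bar u_{r_n}\to w$ strongly in $H^1$. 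Passing to the limit ($n\to\infty$, then $\rho\to0$) in the displayed inequality, the error terms $(Rr_n)^\delta$ and $(Rr_n)^{2+\delta}$ vanish, the left side is lower semicontinuous (in fact converges by strong $H^1$ and $L^1$ convergence of characteristic functions), and we obtain exactly \eqref{e:global1} (resp. \eqref{e:global2}) for $w = u_0\circ A_{x_0}^{\sfrac12}$. In case \textbf{(OP-c)} the constraint $\{\bar v_n > 0\}\subset\mathcal H_{x_0}$ passes to $\{v>0\}\subset H$ after the rotation that sends $\mathcal H_{x_0}$ to $H$ (here one uses that $\mathcal H_{x_0}$ is a half-plane through the origin, by the definition of $\mathcal H_{x_0}$ and $x_0\in\partial H$).

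The main obstacle is the construction of the recovery sequence together with the control of the volume term $\ind_{\{v_n>0\}}$: the interpolation on the annulus could a priori create or destroy a positivity set of non-negligible measure. This is handled by choosing the radii $R-\rho$ so that $\mathcal H^{1}(\partial B_{R-\rho}(y_0)\cap\{w\ne v\})$ behaves well (a.e.\ radius works by Fubini), by the non-degeneracy of $u$ (Remark \ref{rk nondeg}, Definition \ref{d:non-degeneracy}), which prevents the positivity set from being thin, and by the strong $H^1_{loc}$ convergence from Proposition \ref{p:convergence}, which makes the annular gluing energy $\to 0$ as $\rho\to0$ uniformly in $n$ large. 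A minor additional point in the two-phase case is that one must also recover the negativity set $\ind_{\{v_n<0\}}$, but the same interpolation handles both sign components simultaneously since $\bar u_{r_n}\to w$ in $H^1$ forces both $\ind_{\{\bar u_{r_n}>0\}}$ and $\ind_{\{\bar u_{r_n}<0\}}$ to converge by Proposition \ref{p:convergence}(2).
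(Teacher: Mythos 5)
Your proposal is correct and follows essentially the same route as the paper: the authors also first invoke Lemma \ref{l:change_of_variables} (resp.\ Lemma \ref{l:change_of_variables2}) to freeze the coefficients and obtain almost-minimality of $\bar u=u\circ F_{x_0}$ for $J^{x_0}_\square$, and then pass to the limit by the standard recovery-sequence argument, which they do not write out but delegate to Lemma 4.6 of \cite{mateve}. Your write-up simply makes that cited argument explicit (including the annulus gluing and the treatment of the constraint half-plane $\mathcal H_{x_0}$ in the case {\bf\small(OP-c)}).
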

Recall that the function $\bar{u} = u \circ F_{x_0}$, where $F_{x_0}$ is as in \eqref{e:F_x_0}, is a almost-minimizer of the functional $J_{\text{\tiny\sc op}}^{x_0}$ (Lemma \ref{l:change_of_variables}). We then refer to Lemma 4.6 in \cite{mateve} applied to $\bar{u}$ for the proof of Lemma \ref{l:global}. It is also worth mentioning that the strong convergence of the blow-up sequences and the optimality of the blow-up limits are equivalent. 
\begin{lemma}[Homogeneity of the blow-up limits]\label{prop hom blowup}
Let $u \in H^1(B_2)$ be as in Theorem \ref{t:constrained} or Theorem \ref{t:two-phase}. Let $x_0\in B_1\cap\partial\Omega_u$  and let $u_{x_0,r_n}$ be a blow-up sequence converging to a blow-up limit $u_{0}$. Then, $u_{0}$ is one-homogeneous.
\end{lemma}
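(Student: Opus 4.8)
The plan is to deduce one-homogeneity of a blow-up limit $u_0$ from the monotonicity formula of Lemma~\ref{l:mono} together with the strong $H^1_{loc}$ convergence of blow-up sequences (Proposition~\ref{p:convergence}) and the rigidity case of the derivative identity \eqref{e:der1}/\eqref{e:der2}, which forces the excess $E$ to vanish for a function that is a critical point of the $r\mapsto W_\square$ map. First I would reduce to the case $A=Id$ by passing to $\bar u=u\circ F_{x_0}$; by Lemma~\ref{l:change_of_variables} (resp.~Lemma~\ref{l:change_of_variables2}) the function $\bar u$ satisfies the almost-minimality with respect to radial competitors needed to invoke Lemma~\ref{l:mono}, hence $r\mapsto W_\square(\bar u_r)+\frac{Cd}{\q}r^\q$ is non-decreasing on $(0,r_0)$ and the limit $\Theta_\square:=\lim_{r\to0^+}W_\square(\bar u_r)$ exists and is finite (using the Lipschitz bound to control $W_\square$ from below). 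Note that it suffices to prove $u_0\circ A_{x_0}^{\sfrac12}=\bar u_0$ is one-homogeneous, since precomposition with the linear map $A_{x_0}^{\sfrac12}$ preserves homogeneity; here $\bar u_0=\lim_n \bar u_{r_n}$ in $H^1_{loc}$ along the blow-up subsequence.

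The key step is the identification of the Weiss energy of the blow-up limit. For any fixed $\rho>0$, the rescalings satisfy $(\bar u_{r_n})_\rho=\bar u_{\rho r_n}$, and by strong $H^1_{loc}$ convergence together with the $L^1_{loc}$ convergence of the characteristic functions (both from Proposition~\ref{p:convergence}) one gets $W_\square((\bar u_0)_\rho)=\lim_n W_\square(\bar u_{\rho r_n})=\Theta_\square$ for every $\rho>0$: indeed $\rho r_n\to0$, so the monotone limit of $W_\square$ along that sequence is again $\Theta_\square$. (One must be slightly careful: strong $H^1$ convergence gives convergence of $\int_{B_1}|\nabla\cdot|^2$ and of the boundary term $\int_{\partial B_1}(\cdot)^2$ after passing to a further subsequence of radii, or one argues with the monotone pointwise limit directly; in any case the scaling invariance $W_\square((\bar u_0)_\rho)\equiv\Theta_\square$ for a.e.\ $\rho$, hence all $\rho$ by continuity, is the output.) Consequently $\rho\mapsto W_\square((\bar u_0)_\rho)$ is constant.

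Finally I would feed this into the differentiated identity \eqref{e:der1} (resp.~\eqref{e:der2}) applied to the function $\bar u_0$, which is an \emph{exact} minimizer of the relevant global problem (Lemma~\ref{l:global}), so that in particular $W_\square((\bar u_0)_\rho)\le W_\square(z_{0,\rho})$ for all $\rho$, where $z_{0,\rho}$ is the one-homogeneous extension of the trace of $(\bar u_0)_\rho$. Since $\frac{\partial}{\partial\rho}W_\square((\bar u_0)_\rho)=0$ and the term $\frac{d}{\rho}(W_\square(z_{0,\rho})-W_\square((\bar u_0)_\rho))$ is non-negative, the identity forces $\frac1\rho E((\bar u_0)_\rho)\le 0$, i.e.\ $E((\bar u_0)_\rho)=\int_{\partial B_1}|x\cdot\nabla(\bar u_0)_\rho-(\bar u_0)_\rho|^2\,d\HH^{1}=0$ for a.e.\ $\rho>0$. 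Equivalently, $x\cdot\nabla\bar u_0-\bar u_0=0$ on $\R^2\setminus\{0\}$ in the sense of traces on every sphere, which is exactly the statement that $\bar u_0$ is positively one-homogeneous; rescaling back through $A_{x_0}^{\sfrac12}$ gives the one-homogeneity of $u_0$. The main obstacle I anticipate is the careful justification of $W_\square((\bar u_0)_\rho)=\Theta_\square$ for \emph{every} $\rho$ — this needs the strong convergence of Proposition~\ref{p:convergence} (not just weak), the $L^1_{loc}$ convergence of the positivity sets, and a diagonal/continuity argument to upgrade from a.e.\ $\rho$ to all $\rho$; everything else is a direct consequence of the already-established monotonicity formula and the optimality of blow-up limits.
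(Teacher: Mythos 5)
Your argument is correct and essentially identical to the paper's proof: change coordinates via $F_{x_0}$, use Lemma~\ref{l:change_of_variables} (resp.\ \ref{l:change_of_variables2}) and Lemma~\ref{l:mono} to get existence of $\Theta_\square$, exploit Proposition~\ref{p:convergence} to identify $W_\square((\bar u_0)_s)=\Theta_\square$ for every $s$, and then invoke the global optimality (Lemma~\ref{l:global}) and the derivative identity \eqref{e:der1}/\eqref{e:der2} to conclude $E((\bar u_0)_s)=0$, hence one-homogeneity. The caveat you raise about passing from a.e.\ $\rho$ to all $\rho$ is not really needed here: the rescaled functions are uniformly Lipschitz and converge locally uniformly, which directly handles the boundary term $\int_{\partial B_1}(\cdot)^2$ for every fixed radius.
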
 

\begin{proof}
Assume that $x_0=0$ and set $\bar u=u\circ F_{x_0}$. Then   
$$u_{x_0,r}=\bar u_r\circ A_{x_0}^{-\sfrac12}\,,\qquad \text{where} \qquad\bar u_r(x):=\frac{\bar u(rx)}{r}.$$ We first notice that by Lemma \ref{l:change_of_variables}, Lemma \ref{l:mono} and the Lipschitz continuity of $u$, we get that the limit $\displaystyle\Theta_\square:=\lim_{r\to0}W_\square(\bar u_r)$, $\square=\text{\sc op}, \text{\sc tp}$, exists and is finite. Now the strong convergence of $\bar u_{r_n}$ to $\bar u_0:=u_0\circ A_{x_0}^{\sfrac12}$ (Proposition \ref{p:convergence}) implies that, for every $s>0$, we have  
$$\Theta_\square:=\lim_{r\to0}W_\square(\bar u_r)=\lim_{n\to\infty}W_\square(\bar u_{r_n})=\lim_{n\to\infty}W_\square(\bar u_{r_ns})=\lim_{n\to\infty}W_\square((\bar u_{r_n})_s)=W_\square((\bar u_0)_s).$$
In particular, $s \mapsto W_\square(\bar{u}_0,s)$ is constant.
Now, since $\bar u_0$ is a global solution (Lemma \ref{l:global}), \eqref{e:der1} and \eqref{e:der2} imply that $E((\bar u_0)_s)=0$, for every $s>0$. Thus we have $x\cdot\nabla\bar{u}_0=\bar{u}_0$ in $\R^2$, which implies that $\bar u_0$ (and thus, $u_0$) is one-homogeneous. 
\end{proof}

\begin{proof}[\bf Proof of Proposition \ref{p:classification}] 
We now notice that $\bar u_0=u_0\circ A_{x_0}^{\sfrac12} :B_1\to\R$ is one-homogeneous and harmonic on the cone $B_1\cap \{\bar u_0\neq0\}$. Thus, the trace of $\bar u_0$ on the sphere satisfies the equation 
$$-\Delta_{\mathbb{S}} \bar u_0=(d-1)\bar u_0\quad\text{on}\quad \mathbb{S}^{d-1}\cap \{\bar u_0\neq0\},$$
where in dimension two the spherical Laplacian $\Delta_{\mathbb{S}}$ is simply the second derivative and $d-1=1$. Thus, $\bar u_0$ is of the form $\bar u_0(\theta)=\sin(\theta+\theta_0)$, $\theta\in \mathbb{S}^2$, for some constant $\theta_0$. This implies that $\{\bar u_0\neq0\}$ is a union of intervals of length $\pi$. In the one-phase case, since $u$ is non-degenerate (see Remark \ref{rk nondeg}), this implies that $\bar u_0$ is of the form \eqref{e:blowup1}, for some constant $\mu(x_0)$. Now, an internal variation argument (see \cite{altcaf}) implies that $\mu(x_0)=Q_{\text{\tiny\sc op}}^{\sfrac12}(x_0)$, if $x_0\in H\cap B_1^+$, and $\mu(x_0)\ge Q_{\text{\tiny\sc op}}^{\sfrac12}(x_0)$, if $x_0\in \partial H\cap B_1$. The two-phase case follows again by an internal variation argument (see \cite{alcafr}).
\end{proof}

Finally, we prove a uniqueness result for the one- and two-phase (Theorem \ref{t:constrained} and Theorem \ref{t:two-phase}) blow-up limits. This is the only result of this section that cannot be immediately extended to higher dimension. This is due to the fact that the epiperimetric inequality (Theorem \ref{t:epi1} and Theorem \ref{t:epi2}) is known (for the moment) only in dimension two.  

\begin{proposition}[Uniqueness of the blow-up and rate of convergence of the blow-up sequences]\label{p:unique}
Let $u:B_2\to \R$ be as in Theorem \ref{t:constrained} or Theorem \ref{t:two-phase}. There are constants $C>0$, $\gamma>0$ and $r_0>0$ such that the following claims do hold.
\begin{enumerate}
\item[{\bf \small (OP)}] If $u$ is as in Theorem \ref{t:constrained}, then for every $x_0\in\partial\Omega_u\cap B_1$, there is a unique blow-up $u_{x_0}:\R^2\to\R$ (of the form \eqref{e:blowup1} or \eqref{e:blowup1c}) such that 
\begin{equation}\label{e:decay1}
\|u_{x_0,r}-u_{x_0}\|_{L^\infty(B_1)}\le Cr^\gamma\quad\text{for every}\quad r\in(0,r_0).
\end{equation}
\item[{\bf \small (TP)}] If $u$ is as in Theorem \ref{t:two-phase}, then for every $x_0\in\partial\Omega_u^+\cap\partial\Omega_u^-\cap B_1$, there is a unique blow-up $u_{x_0}:\R^2\to\R$ (of the form \eqref{e:blowup2}) such that 
\begin{equation}\label{e:decay2}
\|u_{x_0,r}-u_{x_0}\|_{L^\infty(B_1)}\le Cr^\gamma\quad\text{for every}\quad r\in(0,r_0).
\end{equation}
\end{enumerate}
\end{proposition}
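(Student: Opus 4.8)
The plan is to reduce everything, via the change of variables of Section \ref{s:freezing}, to a statement about the frozen functionals $J_\square^{x_0}$ and their associated Weiss energies $W_\square$, and then to apply the energy-decay machinery of Lemma \ref{l:decay}. Fix $x_0$ and set $\bar u=u\circ F_{x_0}$, $\bar u_r(x)=\bar u(rx)/r$. By Lemma \ref{l:change_of_variables} (resp.\ Lemma \ref{l:change_of_variables2}) the function $\bar u$ is an almost-minimizer of $J_\square^{x_0}$, and moreover $W_\square(\bar u_r)\le W_\square(\bar z_r)+Cr^\delta$ and $W_\square(\bar u_r)\le W_\square(\bar h_r)+Cr^\delta$ for the competitors $\bar z_r$ (one-homogeneous extension) and $\bar h_r$ (epiperimetric competitor from Theorem \ref{t:epi1} or Theorem \ref{t:epi2}). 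By Lemma \ref{prop hom blowup} every blow-up limit $\bar u_0$ is one-homogeneous, and by the classification (Proposition \ref{p:classification}, transported through $A_{x_0}^{\sfrac12}$) it is one of the explicit profiles; in each case one computes directly that $W_\square(\bar u_0)=\Lambda\frac{\pi}{2}$ in the one-phase case and $W_\square(\bar u_0)=(\Lambda_1+\Lambda_2)\frac{\pi}{2}$ in the two-phase case, with $\Lambda=Q_{\text{\tiny\sc op}}(x_0)$, $\Lambda_i=Q_{\text{\tiny\sc tp}}^{\pm}(x_0)$. Since by Lemma \ref{l:mono} the function $r\mapsto W_\square(\bar u_r)+\frac{Cd}{\delta}r^\delta$ is monotone, the limit $\Theta_\square:=\lim_{r\to 0}W_\square(\bar u_r)$ exists, and by the strong $H^1_{loc}$ convergence of blow-up sequences (Proposition \ref{p:convergence}) together with the fact that every blow-up limit realizes the same value, we get $\Theta_\square=\Lambda\frac{\pi}{2}$ (resp.\ $(\Lambda_1+\Lambda_2)\frac{\pi}{2}$).

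Next I need to check that the epiperimetric inequality applies along the whole sequence, i.e.\ that the constant $C_0$ in Theorem \ref{t:epi1} (resp.\ Theorem \ref{t:epi2}) can be chosen uniformly for the traces $c=\bar u_r|_{\partial B_1}$. In the one-phase case this amounts to a lower bound $\int_{\partial B_1}\bar u_r\,d\mathcal H^1\ge C_0>0$, which follows from the non-degeneracy of $u$ (Remark \ref{rk nondeg}, Definition \ref{d:non-degeneracy}): if this integral were too small, $\bar u_r$ would vanish on a fixed ball, contradicting that $x_0\in\partial\Omega_u$. In the two-phase case one needs both $\int_{\partial B_1}(\bar u_r)^+$ and $\int_{\partial B_1}(\bar u_r)^-$ bounded below, which again follows from the non-degeneracy of $u_\pm$ (note here that $x_0$ is assumed to lie on $\partial\Omega_u^+\cap\partial\Omega_u^-$). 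With $C_0$ fixed, Theorem \ref{t:epi1}/\ref{t:epi2} produces $\varepsilon>0$ and the competitor $\bar h_r$ satisfying the epiperimetric inequality $W_\square(\bar h_r)-\Theta_\square\le(1-\varepsilon)(W_\square(\bar z_r)-\Theta_\square)$ — here one uses that $\Theta_\square$ equals exactly the constant $\Lambda\frac{\pi}{2}$ (resp.\ $(\Lambda_1+\Lambda_2)\frac{\pi}{2}$) subtracted in \eqref{eq epi} and \eqref{eq epi DP}. Shrinking $r_0$ if necessary so that $\varepsilon<\frac{\delta}{2d+\delta}$ with $d=2$, all hypotheses of Lemma \ref{l:decay} are met, and it yields a unique $\bar u_{x_0}\in L^2(\partial B_1)$ with $\|\bar u_r-\bar u_{x_0}\|_{L^2(\partial B_1)}^2\le \gamma^{-1}I\,r^\gamma$ for $\gamma=\frac{2\varepsilon}{1-\varepsilon}$.

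It remains to upgrade the $L^2(\partial B_1)$ decay to the $L^\infty(B_1)$ decay in \eqref{e:decay1}, \eqref{e:decay2}, and to transfer it back from $\bar u$ to $u$. Since $\bar u$ is $(M_{\text{\tiny\sc A}}^{\sfrac12}L)$-Lipschitz, each $\bar u_r$ is uniformly Lipschitz, hence so is the limit $\bar u_{x_0}$; interpolating the uniform Lipschitz bound against the $L^2(\partial B_1)$ smallness gives $\|\bar u_r-\bar u_{x_0}\|_{L^\infty(\partial B_1)}\le C r^{\gamma'}$ for a possibly smaller exponent $\gamma'$, and then, since $\bar u_r$ and $\bar u_{x_0}$ are harmonic off their zero sets and one-homogeneous respectively (and more directly, since $r\mapsto u_{x_0,r}$ and $r\mapsto 2u_{x_0,r/2}$ compare at consecutive scales), the $L^\infty$ bound on $\partial B_1$ propagates to all of $B_1$; a standard dyadic summation over the scales $2^{-k}$, using that the estimate holds at every scale, gives the full estimate $\|\bar u_r-\bar u_{x_0}\|_{L^\infty(B_1)}\le Cr^\gamma$ after renaming $\gamma$. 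Composing with the linear map $A_{x_0}^{\sfrac12}$ (which is bi-Lipschitz with constants controlled by $M_{\text{\tiny\sc A}}$) transfers this to $u_{x_0,r}=\bar u_r\circ A_{x_0}^{-\sfrac12}$ and produces the blow-up $u_{x_0}=\bar u_{x_0}\circ A_{x_0}^{-\sfrac12}$, which is of the claimed form \eqref{e:blowup1}–\eqref{e:blowup1c} (resp.\ \eqref{e:blowup2}) by Proposition \ref{p:classification}. The main obstacle is the uniformity of the epiperimetric constant $C_0$ along the blow-up scales — i.e.\ establishing the quantitative non-degeneracy lower bounds on $\int_{\partial B_1}\bar u_r$ (and its $\pm$ parts) uniformly in $r$ — together with verifying that the density $\Theta_\square$ is pinned exactly at the value $\Lambda\frac\pi2$ (resp.\ $(\Lambda_1+\Lambda_2)\frac\pi2$) so that \eqref{e:epi_square} can be invoked with $\Theta_\square$ in place of that constant; once these are in place, Lemma \ref{l:decay} does the rest.
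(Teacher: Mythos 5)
Your proof follows essentially the same route as the paper: pass to $\bar u = u\circ F_{x_0}$, use Lemma \ref{l:change_of_variables} (resp.\ Lemma \ref{l:change_of_variables2}), Lemma \ref{l:mono} and the homogeneity of blow-ups to identify $\Theta_\square = \frac{\pi}{2}Q_{\text{\tiny\sc op}}(x_0)$ (resp.\ $\frac{\pi}{2}(Q_{\text{\tiny\sc tp}}^++Q_{\text{\tiny\sc tp}}^-)(x_0)$), invoke the epiperimetric inequality together with Lemma \ref{l:decay} for the $L^2(\partial B_1)$ decay, and use the uniform Lipschitz bound to upgrade to $L^\infty(B_1)$ and transfer back through $A_{x_0}^{\sfrac12}$ --- the only cosmetic difference being that the paper integrates the spherical estimate in $r$ to obtain $L^2(B_1)$ control and then interpolates in two dimensions, while you interpolate on $\partial B_1$ and then propagate to $B_1$ by the homogeneity/rescaling identity. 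One small slip worth flagging: the epiperimetric constant $\varepsilon$ is produced by Theorem \ref{t:epi1} (resp.\ Theorem \ref{t:epi2}) and does not change if you shrink $r_0$, so ``shrinking $r_0$'' cannot enforce $\varepsilon<\frac{\delta}{2d+\delta}$; instead replace $\varepsilon$ by $\min\bigl\{\varepsilon,\ \tfrac{1}{2}\tfrac{\delta}{2d+\delta}\bigr\}$, which is legitimate because the epiperimetric inequality with gain $\varepsilon$ trivially implies it with any smaller gain.
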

\begin{proof}
Let $u$ be as in {\bf \small (OP)} and $x_0\in\partial\Omega_u\cap B_1$. We set $\bar u=u\circ F_{x_0}$ and  $\bar u_r(x):=\frac{\bar u(rx)}{r}$, and we notice that $\bar u_r=u_{x_0,r}\circ A_{x_0}^{\sfrac12}$.  By Lemma \ref{l:change_of_variables} and Lemma \ref{l:mono}, $r\mapsto W_{\text{\tiny\sc op}}(\bar u_{r}) + Cr^\delta$ is monotone. On the other hand, the homogeneity of the blow-up limits, imply that
$$\Theta_{\text{\tiny\sc op}}:=\lim_{r\to0}W_{\text{\tiny\sc op}}(\bar u_{r})=\frac\pi2Q_{\text{\tiny\sc op}}(x_0).$$
Thus, by the epiperimetric inequality (Theorem \ref{t:epi1}), Lemma \ref{l:change_of_variables} and Lemma \ref{l:decay}, we have that there exists a one-homogeneous function $\bar u_0$ such that, for $r>0$ small enough,
$$\|\bar u_{r}-\bar u_{0}\|_{L^2(\partial B_1)}\le Cr^{\gamma_0/2},$$
where $\gamma_0$ is the constant from Lemma \ref{l:decay}. Integrating in $r$, we get that 
$$\|\bar u_{r}-\bar u_{0}\|_{L^2(B_1)}\le Cr^{\gamma_0/2}.$$
Now, since $\bar u_r=u_{x_0,r}\circ A_{x_0}^{\sfrac12}$ and $A_{x_0}^{\sfrac12}$ is invertible, we get 
$$\|u_{x_0,r}-u_{x_0}\|_{L^2(B_1)}\le Cr^{\gamma_0/2},$$
where $u_{x_0}=\bar u_0\circ A_{x_0}^{\sfrac12}$.
Finally, we notice that the Lipschitz continuity of $u$ implies that there is an universal bound on $\|\nabla u_{x_0,r}\|_{L^\infty(B_1)}$ and  $\|\nabla u_{x_0}\|_{L^\infty(B_1)}$. Thus, we get \eqref{e:decay1} with $\gamma = {\gamma_0/4}$. The proof of {\bf \small (TP)} is analogous.
\end{proof}

\begin{remark}\label{r:1-2}
We notice that the above result does not hold at the one-phase points $x_0\in \partial \Omega_u^+\setminus\partial\Omega_u^-$ of the solutions $u$ of the two-phase problem (Theorem \ref{t:two-phase}). This is due to the fact that the positive part $u_+$ is not a solution of the one-phase problem in the balls $B_r(x_0)$ that have non-empty intersection with the negative phase $\Omega_u^-$. In fact, the blow-up limit $u_{x_0}$ (of $u$ at $x_0$) is still unique, but the decay estimate \eqref{e:decay1} holds only for $r<\frac12\text{dist}(x_0,\Omega_u^-)$. 
\end{remark}

\section{Regularity of the one-phase free boundaries. Proof of Theorem \ref{t:constrained}}\label{s:constrained}
Let $u\in H^1(B_2)$, $u\ge 0$, be as in Theorem \ref{t:constrained}. By Proposition \ref{p:unique} we have that, for every $x_0\in \partial\Omega_u\cap B_1$, there is a unique blow-up limit of $u$ at $x_0$. We denote it by 
$$u_{x_0}(x)=\mu(x_0)\max\{0,\nu_{x_0}\cdot x\},$$
where $\nu_{x_0}$ is of the form $A_{x_0}^{\sfrac12}[\nu]$, for some $\nu\in \partial B_1$; and $\mu(x_0)$ is such that $Q_{\text{\tiny\sc op}}(x_0)\le \mu^2(x_0)\le M_{\text{\tiny\sc A}}L^2$, where $L$ is the Lipschitz constant of $u$. We also notice that 
$$\mu(x_0)=Q_{\text{\tiny\sc op}}^{\sfrac12}(x_0)\quad\text{whenever}\quad x_0\in \partial\Omega_u\cap B_1^+.$$
Moreover, for every point $x_0\in\partial\Omega_u\cap B_1$, we define the half-plane
$$H_{x_0}:=\{x\in\R^2\,:\,x\cdot\nu_{x_0}>0\}.$$
We first prove the following: 
 
\begin{lemma}\label{l:flatness1}
Let $u$ be as in Theorem \ref{t:constrained}. There are constants $C>0$, $\gamma>0$ and $r_0>0$ such that, for every $x_0\in\partial\Omega_u\cap B_1$, we have 
\begin{equation}\label{e:flatness1}
\Omega_{{x_0,r}}\cap B_1\supset \{x\in B_1\,:\, x\cdot\nu_{x_0}>Cr^\gamma\}\quad\text{and}\quad \Omega_{{x_0,r}}\cap \{x\in B_1\,:\, x\cdot\nu_{x_0}<-Cr^\gamma\}=\emptyset,
\end{equation}
for every $r\in(0,r_0)$, where $\Omega_{x_0,r}:=\{u_{x_0,r}>0\}$. 
\end{lemma}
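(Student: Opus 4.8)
The plan is to deduce the flatness estimate \eqref{e:flatness1} directly from the rate-of-convergence estimate \eqref{e:decay1} of Proposition \ref{p:unique}, together with the non-degeneracy of $u$ (Remark \ref{rk nondeg}) and the known form of the blow-up limit $u_{x_0}$. First I would fix $x_0\in\partial\Omega_u\cap B_1$ and recall that by Proposition \ref{p:unique}(OP) there is a unique blow-up $u_{x_0}(x)=\mu(x_0)\max\{0,\nu_{x_0}\cdot x\}$ with $\mu(x_0)\ge Q_{\text{\tiny\sc op}}^{\sfrac12}(x_0)\ge C_{\text{\tiny\sc Q}}^{-\sfrac12}$, so $\mu(x_0)$ is bounded below by a fixed positive constant, and $\|u_{x_0,r}-u_{x_0}\|_{L^\infty(B_1)}\le Cr^\gamma$ for $r\in(0,r_0)$. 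Write $\mu_0:=C_{\text{\tiny\sc Q}}^{-\sfrac12}$ for the lower bound.

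For the first inclusion in \eqref{e:flatness1}: if $x\in B_1$ satisfies $x\cdot\nu_{x_0}> K r^\gamma$ with $K$ a large constant to be chosen (this gives the $Cr^\gamma$ in the statement after relabeling), then $u_{x_0}(x)=\mu(x_0)\,(x\cdot\nu_{x_0})\ge \mu_0 K r^\gamma$, hence by the $L^\infty$ decay $u_{x_0,r}(x)\ge \mu_0 K r^\gamma - Cr^\gamma>0$ once $K>C/\mu_0$; thus $x\in\Omega_{x_0,r}$. For the second (emptiness) statement I would argue by contradiction combined with non-degeneracy rather than by a pointwise bound, since $u_{x_0,r}$ need not be continuous in a way that controls it from below off its support purely by the decay. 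Suppose there is $x\in B_1$ with $x\cdot\nu_{x_0}<-K r^\gamma$ and $u_{x_0,r}(x)>0$. On the half-space $\{y\cdot\nu_{x_0}<-\tfrac12 K r^\gamma\}$ one has $u_{x_0}\equiv 0$, so $\|u_{x_0,r}\|_{L^\infty}\le Cr^\gamma$ there; a better route is: on a ball $B_\rho(x)$ with $\rho\sim \tfrac12 K r^\gamma$ contained in $\{y\cdot\nu_{x_0}<0\}$, the average $\aver{\partial B_\rho(x)} u_{x_0,r}\le Cr^\gamma$, which for $\rho$ of order $r^\gamma$ is $\le C' \rho$. Rescaling $u_{x_0,r}$ (which is again an almost-minimizer up to adjusting constants, cf. Lemma \ref{l:change_of_variables}), the non-degeneracy in the form of Definition \ref{d:non-degeneracy}, applied at scale $\rho$, forces $u_{x_0,r}\equiv 0$ on a smaller ball around $x$ provided $C'$ — equivalently the implied threshold $\eta$ — works out; choosing $K$ large enough relative to the non-degeneracy constant $\eta$ makes $Cr^\gamma < \eta\rho^{\,d}/\rho^{\,d-1}\sim\eta\rho$ fail the degeneracy threshold and yields the contradiction. (Here $d=2$, so the homogeneities are $\rho^2$ vs. $\rho$ and everything is consistent.) Iterating or taking $K$ a fixed multiple of $1/\eta$ and $1/\mu_0$, and then renaming $K$ to $C$, gives both halves of \eqref{e:flatness1} uniformly in $x_0$, using that the constants $C,\gamma,r_0$ from Proposition \ref{p:unique} and the non-degeneracy constants are uniform.

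The main obstacle I expect is the emptiness statement: the $L^\infty$-decay of $u_{x_0,r}$ only controls the function from above on the ``empty'' side, so one genuinely needs to invoke non-degeneracy to convert ``$u$ is small on a ball of radius $\sim r^\gamma$'' into ``$u$ vanishes identically nearby.'' Care is needed to make sure the non-degeneracy of $u$ transfers correctly to the rescalings $u_{x_0,r}$ — one should note that $u$ being an $L$-Lipschitz almost-minimizer makes each $u_{x_0,r}$ an almost-minimizer on $B_1$ with the same structural constants (Lemma \ref{l:change_of_variables} / Remark \ref{rk nondeg}), so Definition \ref{d:non-degeneracy} applies with constants independent of $x_0$ and $r$ — and to track that the scale $\rho\sim r^\gamma$ at which we test non-degeneracy stays below the admissible radius $r_0$ in that definition (true for $r$ small, shrinking $r_0$ if necessary). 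A minor point is matching the constant ``$C$'' appearing in \eqref{e:flatness1} with the various constants produced along the way, but this is just relabeling.
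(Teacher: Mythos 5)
Your proposal is correct and follows essentially the same route as the paper's (very terse) proof: the first inclusion from the $L^\infty$ rate of convergence \eqref{e:decay1} together with the explicit form of $u_{x_0}$ and the uniform lower bound $\mu(x_0)\ge Q_{\text{\tiny\sc op}}^{\sfrac12}(x_0)$, and the emptiness statement from \eqref{e:decay1}, the vanishing of $u_{x_0}$ on $\{x\cdot\nu_{x_0}<0\}$, and non-degeneracy at scale $\rho\sim r^\gamma$ (the paper uses the equivalent $L^\infty$ form $\|u_{x_0,r}\|_{L^\infty(B_s(y_0))}\ge Cs$ rather than the averaged form of Definition \ref{d:non-degeneracy}, but these are interchangeable here). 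The scaling bookkeeping and the caveats you flag are exactly the right ones.
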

\begin{proof}
The first part of \eqref{e:flatness1} follows by the uniform convergence of the blow-up sequence $u_{x_0,r}$ (Proposition \ref{p:unique}, equation \eqref{e:decay1}) and the form of the blow-up limit $u_{x_0}$. The second part of \eqref{e:flatness1} follows again by \eqref{e:decay1}, the fact that $u_{x_0}\equiv 0$ on $B_1\setminus H_{x_0}$ and by the non-degeneracy of $u$, which can be written as
$$\text{If}\quad u_{x_0,r}(y_0)>0\,,\quad\text{then}\quad \|u_{x_0,r}\|_{L^\infty(B_s(y_0))}\ge Cs\,,\quad\text{for every}\quad s\in(0,1),$$
for some $C>0$. 
\end{proof}
\begin{lemma}\label{l:oscillation1}
Let $u$ be as in Theorem \ref{t:constrained}. There are constants $R,\alpha\in(0,1)$ and $C>0$ such that, for every $x_0,y_0\in\partial\Omega_u\cap B_R$, we have 
\begin{equation}\label{e:oscillation1}
|\nu_{x_0}-\nu_{y_0}|\le C|x_0-y_0|^\alpha\qquad \text{and}\qquad |\mu(x_0)-\mu(y_0)|\le C|x_0-y_0|^\alpha.
\end{equation}
\end{lemma}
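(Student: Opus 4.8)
The plan is to prove the Hölder continuity of the map $x_0 \mapsto (\mu(x_0), \nu_{x_0})$ on the free boundary by comparing the blow-up limits at two nearby free boundary points through an intermediate scale. Fix $x_0, y_0 \in \partial\Omega_u \cap B_R$ and set $\rho = |x_0 - y_0|$. The key observation is that $u_{x_0}$ and $u_{y_0}$ are the unique blow-up limits guaranteed by Proposition \ref{p:unique}, and by the decay estimate \eqref{e:decay1}, $u_{x_0,r}$ is $Cr^\gamma$-close in $L^\infty(B_1)$ to $u_{x_0}$, and similarly for $y_0$. I would choose the intermediate radius $r = \rho^{1/2}$ (or some fixed power $\rho^\theta$ with $\theta \in (0,1)$ to be optimized) and compare $u_{x_0,r}$ and $u_{y_0,r}$ using the Lipschitz bound on $u$: since $|x_0 - y_0| = \rho = r^2 \ll r$, the rescalings $u_{x_0,r}(x) = u(x_0 + rx)/r$ and $u_{y_0,r}(x) = u(y_0+rx)/r$ differ pointwise by at most $L|x_0 - y_0|/r = L r$, hence $\|u_{x_0,r} - u_{y_0,r}\|_{L^\infty(B_1)} \le Lr$.

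Combining the three estimates via the triangle inequality gives $\|u_{x_0} - u_{y_0}\|_{L^\infty(B_1)} \le C(r^\gamma + r) \le C r^{\min\{\gamma,1\}} = C \rho^{\min\{\gamma,1\}/2}$. The final step is a purely algebraic/geometric lemma: two one-homogeneous functions of the form $x \mapsto \mu\max\{0, \nu\cdot x\}$ with $C_{\text{\tiny\sc Q}}^{-1/2} \le \mu \le M_{\text{\tiny\sc A}}^{1/2} L$ and $\nu \in \partial B_1$ (possibly after the $A_{x_0}^{1/2}$ normalization, which is itself Hölder in $x_0$ since $A$ is $\delta$-Hölder) that are $\eps$-close in $L^\infty(B_1)$ must have $|\mu_1 - \mu_2| + |\nu_1 - \nu_2| \le C\eps$; this is elementary because evaluating at well-chosen points (e.g. in the direction $\nu_1$, or at a point where one of the two vanishes) controls the parameters linearly. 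Taking $\alpha = \min\{\gamma, 1\}/2$ then yields \eqref{e:oscillation1}. One also needs to track that $\nu_{x_0}= A_{x_0}^{1/2}[\nu]$, so that the Hölder continuity of $A \mapsto A^{1/2}$ together with the $\delta$-Hölder continuity of $A$ contributes an extra $C\rho^\delta$ term to $|\nu_{x_0} - \nu_{y_0}|$, harmless after shrinking $\alpha$.

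The main obstacle is a subtle one: a priori the decay rate $\gamma$ and the radius $r_0$ in \eqref{e:decay1} are uniform over $x_0 \in \partial\Omega_u \cap B_1$ — one must make sure Proposition \ref{p:unique} genuinely provides this uniformity (which it does, since the constants there depend only on the structural data $C_{\text{\tiny\sc A}}, M_{\text{\tiny\sc A}}, C_{\text{\tiny\sc Q}}, \delta, \delta_1, C_1$ and the Lipschitz constant $L$, not on the base point). A second point of care is that when $x_0$ or $y_0$ lies on the constrained boundary $\partial H \cap B_1$, the parameter $\mu$ is no longer pinned to $Q_{\text{\tiny\sc op}}^{1/2}$ but satisfies only $\mu \ge Q_{\text{\tiny\sc op}}^{1/2}(x_0)$, and the direction $\nu$ is constrained to be normal to $\partial H$; the comparison argument above is insensitive to this since it only uses the form \eqref{e:blowup1c}, but one should remark that when both points are on $\partial H$ the estimate is trivial for $\nu$ (it is constant) and reduces to controlling $\mu$. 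Finally, to pass from $\rho$ small to all $\rho \le 2R$, one simply absorbs the case $\rho \gtrsim r_0^2$ into the constant $C$ using the uniform bounds $|\mu| \le M_{\text{\tiny\sc A}}^{1/2}L$ and $|\nu| = 1$.
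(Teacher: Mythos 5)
Your proposal is correct and follows essentially the same route as the paper: compare $u_{x_0,r}$ and $u_{y_0,r}$ at an intermediate scale $r=|x_0-y_0|^{\theta}$ via the Lipschitz bound, use the uniform decay of Proposition \ref{p:unique} and the triangle inequality to get closeness of the blow-up limits, and then recover $|\mu(x_0)-\mu(y_0)|+|\nu_{x_0}-\nu_{y_0}|$ by an elementary comparison of half-plane functions. The only cosmetic differences are that you work in $L^\infty(B_1)$ where the paper integrates to $L^2(B_1)$ and uses the explicit identity \eqref{eq 6}, and that the paper optimizes the exponent to $\alpha=\gamma/(1+\gamma)$ rather than taking $\theta=\sfrac12$.
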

\begin{proof}
Let $\gamma\in(0,1)$ be the exponent from Proposition \ref{p:unique} and let $\alpha:=\frac{\gamma}{1+\gamma}$. 
Let $x_0,y_0 \in B_{R} \cap \partial \Omega_u$, where we choose $R$ such that $(2R)^{1-\alpha}\le r_0$, where $r_0$ is the constant from Proposition \ref{p:unique}. We set $r:=|x_0-y_0|^{1-\alpha}$. Recall that $u$ is Lipschitz continuous and set $L=\|\nabla u\|_{L^\infty}$. Then, for every $x \in B_1$, we have 
\[|u_{x_0,r}(x) - u_{y_0,r}(x)| = \frac1r|u(x_0+rx)-u(y_0+rx)|\le  L\frac{|x_0-y_0|}r=L |x_0-y_0|^{\alpha}.\]
and then, by an integration on $B_1$, we get
$$\|u_{x_0,r} - u_{y_0,r}\|_{L^2(B_1)} \leq |B_1|^{\sfrac12} L |x_0-y_0|^{\alpha}.$$
On the other hand, by the choice of $R$, we have that $r\le r_0$; applying Proposition \ref{p:unique}, we get  
$$\|u_{x_0,r} - u_{x_0}\|_{L^2(B_1)}\le Cr^\gamma\qquad\text{and}\qquad \|u_{y_0,r} - u_{y_0}\|_{L^2(B_1)} \le Cr^\gamma.$$
Thus, by the triangular inequality and the fact that $r^\gamma=|x_0-y_0|^\alpha$, we obtain 
\begin{equation}\label{e:osc_blowup1}
\|u_{x_0} - u_{y_0}\|_{L^2(B_1)} \leq \big(|B_1|^{\sfrac12}L +2C\big)|x_0-y_0|^{\alpha}.
\end{equation}
The conclusion now follows by a general argument. Indeed, for any pair of vectors $v_1,v_2 \in \R^2$, we have
\begin{align}\label{eq 6}
\nonumber |v_1 - v_2| &= \left( \frac{2}{\pi} \int_{B_1} |v_1 \cdot x - v_2 \cdot x|^2 \,dx \right)^{\sfrac12} \\
&\leq \left( \int_{B_1} |(v_1 \cdot x)_+ - (v_2 \cdot x)_+|^2 \,dx \right)^{\sfrac12} + 
\left(\int_{B_1} |(v_1 \cdot x)_- - (v_2 \cdot x)_-|^2 \,dx \right)^{\sfrac12} \\
\nonumber& = 2 \left(\int_{B_1} |(v_1 \cdot x)_+ - (v_2 \cdot x)_+|^2 \,dx \right)^{\sfrac12}.
\end{align}
Applying the above estimate to $v_1=\mu(x_0)\nu_{x_0}$ and $v_2=\mu(y_0)\nu_{y_0}$, and using \eqref{e:osc_blowup1}, we get \eqref{e:oscillation1}. \end{proof}

\begin{proof}[\bf Proof of Theorem \ref{t:constrained}]
We first claim that, for every $\eps>0$, there exists $\rho>0$ such that, for $x_0 \in \partial \Omega_u \cap B_{\rho}$ we have 
\begin{equation}\label{eq cone}
u>0\,\text{ on }\, C^+(x_0,\varepsilon) \cap B_\rho(x_0)\, \qquad\text{and}\qquad u=0 \,\text{ on }\, C^-(x_0,\varepsilon) \cap B_\rho(x_0),
\end{equation}
where 
\begin{equation*}
C^\pm(x_0,\varepsilon) := \left\{ x \in \R^2 \b \{0\} : \pm \nu_{x_0} \cdot (x-x_0) \geq \varepsilon |x-x_0| \right\}.
\end{equation*}
Indeed, the flatness estimate \eqref{e:flatness1} implies \eqref{eq cone} by taking $\rho$ such that $C\rho^\gamma\le \eps$, where $C$ and $\gamma$ are the constants from Lemma \ref{l:flatness1}.

We now fix $x_0\in B_1\cap\partial\Omega_u$. Without loss of generality we can suppose that $x_0=0$ and $H_{x_0}=\{(s,t)\in\R^2\,:\,t>0\}$. Now, let $\eps \in (0,1)$ and $\rho>0$ as in \eqref{eq cone} and set $\delta = \rho\sqrt{1-\eps^2}$. By \eqref{eq cone} we have for every $s\in(-\delta,\delta)$

 $\,\bullet$ the set $\mathcal S_+^s:=\{t\in(-\delta,\delta)\ :\ u(s,t)>0\}$ contains the interval $(\rho\eps,\delta)$; 

 $\,\bullet$ the set $\mathcal S_0^s:=\{t\in(-\delta,\delta)\ :\ u(s,t)=0\}$ contains the interval $(-\delta,-\rho\eps)$.

\noindent This implies that the function 
\[ g(s) := \max \{ t \in \R : u(s,t)>0\} \]
is well defined and such that 
$$SQ_\delta\cap \Omega_u=\{(s,t)\in SQ_\delta\,:\, g(s)<t\}\qquad \text{and}\qquad SQ_\delta\setminus \Omega_u=\{(s,t)\in SQ_\delta\,:\, g(s)\geq t\},$$
where $SQ_\delta=(-\delta,\delta)\times(-\delta,\delta)$. Now, the flatness condition \eqref{e:flatness1} implies that $g$ is differentiable on $(-\delta,\delta)$.
 Furthermore, since $\nu$ is H\"{o}lder continuous, we deduce that $g$ is a function of class $C^{1,\alpha}$. This concludes the proof. 
\end{proof}

\section{Regularity of the two-phase free boundaries. Proof of Theorem \ref{t:two-phase}}\label{s:two-phase}
Let $u$ be as in Theorem \ref{t:two-phase}. Then, by Proposition \ref{p:unique}, at every point $x_0\in\partial\Omega_u\cap B_1$ there is a unique blow-up limit $u_{x_0}$ given by
\begin{align*}
u_{x_0}(x) = \mu_+(x_0) \max \{0,x\cdot\nu_{x_0}\}, \quad &\text{if } x_0 \in \Gamma_+ := (\partial \Omega_u^+ \backslash \partial \Omega_u^-) \cap B_1; \\
u_{x_0}(x) = \mu_-(x_0) \min \{0,x\cdot\nu_{x_0}\}, \quad &\text{if } x_0 \in \Gamma_- := (\partial \Omega_u^- \backslash \partial \Omega_u^+) \cap B_1; \\
u_{x_0}(x) = \mu_+(x_0) \max \{ 0,x\cdot\nu_{x_0}\} + \mu_-(x_0) \min \{ 0,x\cdot\nu_{x_0}\}, \quad &\text{if } x_0 \in \Gamma_{\text{\tiny\sc tp}} := \partial \Omega_u^+ \cap \partial \Omega_u^-\cap B_1,
\end{align*}
where $\nu_{x_0} \in \R^2$ is of the form $A_{x_0}^{-\sfrac12}[\nu]$, for some $\nu\in\partial B_1$, and $\mu_+(x_0)$ and $\mu_-(x_0)$ are positive and such that $Q_{\text{\tiny\sc tp}}^\pm(x_0)\le \mu_\pm^2(x_0) \le M_{\text{\tiny\sc A}}L^2$, where $L=\|\nabla u\|_{L^\infty(B_2)}$ is the Lipschitz constant of $u$, and 
\begin{align*}
\mu_\pm^2(x_0)=Q_{\text{\tiny\sc tp}}^\pm(x_0), \quad &\text{if } x_0 \in \Gamma_\pm \\
\mu_+^2(x_0)-\mu_-^2(x_0) = Q_{\text{\tiny\sc tp}}^+(x_0)-Q_{\text{\tiny\sc tp}}^-(x_0), \quad &\text{if } x_0 \in \Gamma_{\text{\tiny\sc tp}}.
\end{align*}

Notice that Corollary \ref{t:unconstrained} already implies that the one-phase free boundaries $\Gamma_+$ and $\Gamma_-$ are $C^{1,\alpha}$ regular. Thus, it remains to prove that $\partial\Omega_u^+$ and $\partial\Omega_u^-$ are smooth in a neighborhood of $\Gamma_{\text{\tiny\sc tp}}$.

\begin{lemma}[Flatness of the free boundary at the two-phase points]\label{l:flatness2}
Let $u$ be as in Theorem \ref{t:two-phase}. There are constants $C>0$, $\gamma>0$ and $r_0>0$ such that, for every $x_0\in\partial\Gamma_{\text{\tiny\sc tp}}$, we have 
\begin{equation}\label{e:flatness2}
\Omega_{{x_0,r}}^+\cap B_1\supset \{x\in B_1\,:\, x\cdot\nu_{x_0}>Cr^\gamma\}\quad\text{and}\quad \Omega_{{x_0,r}}^-\cap B_1\supset  \{x\in B_1\,:\, x\cdot\nu_{x_0}<-Cr^\gamma\},
\end{equation}
for every $r\in(0,r_0)$, where $\Omega_{x_0,r}^+:=\{u_{x_0,r}>0\}$ and $\Omega_{x_0,r}^-:=\{u_{x_0,r}<0\}$. 
\end{lemma}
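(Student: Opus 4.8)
The plan is to mimic exactly the proof of Lemma \ref{l:flatness1} for the one-phase case, but now tracking both the positive and the negative phase simultaneously. The key input is Proposition \ref{p:unique} (case {\bf\small (TP)}): at a two-phase point $x_0 \in \Gamma_{\text{\tiny\sc tp}}$ the blow-up is unique, of the form \eqref{e:blowup2}, and the rate estimate \eqref{e:decay2} gives $\|u_{x_0,r} - u_{x_0}\|_{L^\infty(B_1)} \le Cr^\gamma$ for $r \in (0,r_0)$, where $u_{x_0}(x) = \mu_+(x_0)\max\{0,x\cdot\nu_{x_0}\} + \mu_-(x_0)\min\{0,x\cdot\nu_{x_0}\}$ with $\mu_\pm(x_0) > 0$ strictly positive (indeed $\mu_\pm^2(x_0) \ge Q_{\text{\tiny\sc tp}}^\pm(x_0) \ge C_{\text{\tiny\sc Q}}^{-1} > 0$). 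So first I would record this uniform convergence and the strict positivity of $\mu_\pm(x_0)$.

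For the first inclusion in \eqref{e:flatness2}: if $x \in B_1$ with $x\cdot\nu_{x_0} > Cr^\gamma$ (suitably rescaling the constant), then $u_{x_0}(x) = \mu_+(x_0)\,x\cdot\nu_{x_0} \ge \mu_+(x_0)Cr^\gamma$, which by \eqref{e:decay2} forces $u_{x_0,r}(x) > 0$, hence $x \in \Omega_{x_0,r}^+$; one just needs to absorb the factor $\mu_+(x_0) \ge C_{\text{\tiny\sc Q}}^{-1/2}$ into the choice of the constant in front of $r^\gamma$, which is uniform in $x_0$. The second inclusion is entirely symmetric, using $u_{x_0}(x) = \mu_-(x_0)\,x\cdot\nu_{x_0} \le -\mu_-(x_0)Cr^\gamma$ when $x\cdot\nu_{x_0} < -Cr^\gamma$, so $u_{x_0,r}(x) < 0$ and $x \in \Omega_{x_0,r}^-$. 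Note that, unlike in the one-phase case, no non-degeneracy argument is needed here for the second inclusion, because the negative phase is \emph{present} in the blow-up (its density is $1/2$, not $0$), so the uniform convergence alone suffices. This is why the statement is cleaner than Lemma \ref{l:flatness1}.

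The one subtlety is uniformity of all constants over $x_0 \in \Gamma_{\text{\tiny\sc tp}}$: the constant $C$ and exponent $\gamma$ in \eqref{e:decay2} come from Lemma \ref{l:decay}, and their dependence is only through $L$, the H\"older data of $A$ and the $Q$'s, and $\Theta_{\text{\tiny\sc tp}} = (\Lambda_1 + \Lambda_2)\frac{\pi}{2} = (Q_{\text{\tiny\sc tp}}^+(x_0) + Q_{\text{\tiny\sc tp}}^-(x_0))\frac{\pi}{2}$, which is bounded above and below uniformly since $C_{\text{\tiny\sc Q}}^{-1} \le Q_{\text{\tiny\sc tp}}^\pm \le C_{\text{\tiny\sc Q}}$; likewise the lower bounds $\mu_\pm(x_0) \ge C_{\text{\tiny\sc Q}}^{-1/2}$ are uniform. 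I would state this once and then the proof is essentially a two-line estimate for each inclusion. I do not expect a real obstacle: the only thing to be careful about is that the estimate \eqref{e:decay2} is in terms of $u_{x_0,r}$ directly (not in the changed variables $\bar u_r$), which is exactly how Proposition \ref{p:unique} is phrased, so no further change of coordinates is needed here.

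\begin{proof}
Fix $x_0 \in \Gamma_{\text{\tiny\sc tp}}$. By Proposition \ref{p:unique}, case {\bf\small (TP)}, there are constants $C>0$, $\gamma>0$ and $r_0>0$, all independent of $x_0$ (they depend only on $L$, on the H\"older data of $A$, $Q_{\text{\tiny\sc tp}}^+$, $Q_{\text{\tiny\sc tp}}^-$, and on $C_{\text{\tiny\sc Q}}$, $M_{\text{\tiny\sc A}}$, through the bounds $C_{\text{\tiny\sc Q}}^{-1} \le Q_{\text{\tiny\sc tp}}^\pm \le C_{\text{\tiny\sc Q}}$ and $\Theta_{\text{\tiny\sc tp}} = (Q_{\text{\tiny\sc tp}}^+(x_0) + Q_{\text{\tiny\sc tp}}^-(x_0))\tfrac{\pi}{2}$), such that
$$\|u_{x_0,r} - u_{x_0}\|_{L^\infty(B_1)} \le Cr^\gamma \quad\text{for every}\quad r \in (0,r_0),$$
where $u_{x_0}(x) = \mu_+(x_0)\max\{0,x\cdot\nu_{x_0}\} + \mu_-(x_0)\min\{0,x\cdot\nu_{x_0}\}$ with $\mu_\pm^2(x_0) \ge Q_{\text{\tiny\sc tp}}^\pm(x_0) \ge C_{\text{\tiny\sc Q}}^{-1}$. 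Set $m := C_{\text{\tiny\sc Q}}^{-\sfrac12} \le \min\{\mu_+(x_0),\mu_-(x_0)\}$ and let $C' := C/m$.

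If $x \in B_1$ satisfies $x\cdot\nu_{x_0} > C'r^\gamma$, then $u_{x_0}(x) = \mu_+(x_0)\,(x\cdot\nu_{x_0}) > m\,C'r^\gamma = Cr^\gamma \ge \|u_{x_0,r} - u_{x_0}\|_{L^\infty(B_1)}$, hence $u_{x_0,r}(x) > 0$, that is $x \in \Omega_{x_0,r}^+$. Similarly, if $x \in B_1$ satisfies $x\cdot\nu_{x_0} < -C'r^\gamma$, then $u_{x_0}(x) = \mu_-(x_0)\,(x\cdot\nu_{x_0}) < -m\,C'r^\gamma = -Cr^\gamma \le -\|u_{x_0,r} - u_{x_0}\|_{L^\infty(B_1)}$, hence $u_{x_0,r}(x) < 0$, that is $x \in \Omega_{x_0,r}^-$. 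This proves \eqref{e:flatness2} with $C'$ in place of $C$.
\end{proof}
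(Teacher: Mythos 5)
Your proof is correct and takes essentially the same route as the paper: the paper's own proof is a one-line observation that both inclusions follow from the uniform convergence of $u_{x_0,r}$ to the blow-up limit given by Proposition \ref{p:unique} (equation \eqref{e:decay2}), and your argument simply spells out that deduction, including the correct remark that, unlike in Lemma \ref{l:flatness1}, no non-degeneracy is needed because both phases are present in the blow-up with $\mu_\pm(x_0)\ge C_{\text{\tiny\sc Q}}^{-\sfrac12}>0$.
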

\begin{proof}
Both the inclusions of \eqref{e:flatness2} follow by the uniform convergence of $u_{x_0,r}$ (Proposition \ref{p:unique}, equation \eqref{e:decay2}) to the blow-up limit $u_{x_0}$. 
\end{proof}
\begin{lemma}\label{l:oscillation2}
Let $u$ be as in Theorem \ref{t:two-phase}. There are constants $R,\alpha\in(0,1)$ and $C>0$ such that, for every $x_0,y_0\in\partial\Gamma_{\text{\tiny\sc tp}}\cap B_R$, we have  
\begin{equation}\label{e:oscillation2}
|\nu_{x_0}-\nu_{y_0}|\le C|x_0-y_0|^\alpha\qquad \text{and}\qquad |\mu_{\pm}(x_0)-\mu_{\pm}(y_0)|\le C|x_0-y_0|^\alpha.
\end{equation}
\end{lemma}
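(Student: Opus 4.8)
The plan is to mimic the proof of Lemma \ref{l:oscillation1} almost verbatim, replacing the one-phase decay estimate \eqref{e:decay1} by its two-phase counterpart \eqref{e:decay2} from Proposition \ref{p:unique}. First I would fix $\gamma>0$ to be the exponent provided by Proposition \ref{p:unique} for the two-phase case, set $\alpha:=\frac{\gamma}{1+\gamma}$, and choose $R\in(0,1)$ so small that $(2R)^{1-\alpha}\le r_0$, where $r_0$ is again the radius from Proposition \ref{p:unique}. Given $x_0,y_0\in\Gamma_{\text{\tiny\sc tp}}\cap B_R$, I would put $r:=|x_0-y_0|^{1-\alpha}\le r_0$ and use the Lipschitz bound $L=\|\nabla u\|_{L^\infty}$ to estimate, for every $x\in B_1$,
\[
|u_{x_0,r}(x)-u_{y_0,r}(x)|=\frac1r|u(x_0+rx)-u(y_0+rx)|\le L\frac{|x_0-y_0|}{r}=L|x_0-y_0|^{\alpha},
\]
so that $\|u_{x_0,r}-u_{y_0,r}\|_{L^2(B_1)}\le |B_1|^{\sfrac12}L|x_0-y_0|^{\alpha}$.

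Next I would invoke \eqref{e:decay2} (noting $r^\gamma=|x_0-y_0|^\alpha$) to get $\|u_{x_0,r}-u_{x_0}\|_{L^2(B_1)}\le Cr^\gamma$ and the same for $y_0$, and combine everything by the triangle inequality to obtain
\[
\|u_{x_0}-u_{y_0}\|_{L^2(B_1)}\le\big(|B_1|^{\sfrac12}L+2C\big)|x_0-y_0|^{\alpha}.
\]
It then remains to convert this $L^2$-closeness of the two-phase blow-up profiles $u_{x_0}=\mu_+(x_0)(x\cdot\nu_{x_0})_+ -\mu_-(x_0)(x\cdot\nu_{x_0})_-$ into the separate bounds on $|\nu_{x_0}-\nu_{y_0}|$ and $|\mu_\pm(x_0)-\mu_\pm(y_0)|$. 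Here, instead of the one-phase linear-algebra identity \eqref{eq 6}, I would argue as follows: writing $v_i^+:=\mu_+(\cdot)\nu$ and $v_i^-:=\mu_-(\cdot)\nu$, the positive part $(x\cdot\nu_{x_0})_+$ of $u_{x_0}$ is supported on the half $x\cdot\nu_{x_0}\ge0$ and the negative part on the complementary half, so that $\|u_{x_0}-u_{y_0}\|_{L^2(B_1)}^2$ dominates (up to a dimensional constant) both $\int_{B_1}|(\mu_+(x_0)\nu_{x_0}\cdot x)_+-(\mu_+(y_0)\nu_{y_0}\cdot x)_+|^2\,dx$ and the analogous integral for the negative parts; indeed, the two functions $\min\{(x\cdot\nu_{x_0})_+,(x\cdot\nu_{y_0})_-\}$-type overlaps are harmless once one separates the contributions using that each of the four pieces $(x\cdot\nu_{x_0})_\pm,(x\cdot\nu_{y_0})_\pm$ is supported in a half-plane. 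Applying \eqref{eq 6} to $v_1=\mu_+(x_0)\nu_{x_0}$, $v_2=\mu_+(y_0)\nu_{y_0}$ yields $|\mu_+(x_0)\nu_{x_0}-\mu_+(y_0)\nu_{y_0}|\le C|x_0-y_0|^\alpha$, and likewise with $\mu_-$; from these two vector estimates, taking moduli gives the $\mu_\pm$ bounds immediately, and then (using that $\mu_\pm$ are bounded below by $C_{\text{\tiny\sc Q}}^{-1/2}>0$ away from zero on $\Gamma_{\text{\tiny\sc tp}}$) dividing out $\mu_+$ — or better, averaging the $\mu_+$ and $\mu_-$ vector estimates to control the common direction $\nu$ — yields the bound on $|\nu_{x_0}-\nu_{y_0}|$.

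The only genuinely new point compared to Lemma \ref{l:oscillation1} — and the step I expect to be the minor obstacle — is this last extraction: in the one-phase case the blow-up is a single ramp and \eqref{eq 6} applies directly, whereas for two-phase profiles one must check that the positive and negative parts of $u_{x_0}$ and $u_{y_0}$ do not "cancel" in the $L^2$ difference. This is true because $(t)_+$ and $(t)_-$ have disjoint (essentially) supports in $t$, and $x\mapsto x\cdot\nu_{x_0}$ and $x\mapsto x\cdot\nu_{y_0}$ change sign across nearby hyperplanes, so on $B_1$ the cross terms between a positive piece and a negative piece are of lower order — more precisely $\int_{B_1}(x\cdot\nu_{x_0})_+(x\cdot\nu_{y_0})_-\,dx\le C|\nu_{x_0}-\nu_{y_0}|^{?}$ is dominated by what we are trying to bound. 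A clean way to sidestep this entirely is: decompose $u_{x_0}=u_{x_0}^++u_{x_0}^-$ with $u_{x_0}^\pm=\pm(u_{x_0})_\pm$, observe $\|u_{x_0}^+-u_{y_0}^+\|_{L^2}\le 2\|u_{x_0}-u_{y_0}\|_{L^2}$ (since $|a_+-b_+|\le|a-b|$ pointwise), and then apply \eqref{eq 6} to each sign separately. This reduces the two-phase case to two applications of the one-phase argument, and the lemma follows. The rest is identical bookkeeping to the proof of Lemma \ref{l:oscillation1}, so I would simply write "the proof follows that of Lemma \ref{l:oscillation1}, applying \eqref{eq 6} separately to $(u_{x_0})_+$ and $(u_{x_0})_-$."
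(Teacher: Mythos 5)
Your proposal is correct and matches the paper's approach: the paper's proof of this lemma is literally ``follows step by step the one of Lemma \ref{l:oscillation1}'', and you carry out exactly that adaptation, with the one genuinely new step (reducing to two applications of \eqref{eq 6} via the pointwise inequality $|a_\pm-b_\pm|\le|a-b|$ applied to $(u_{x_0})_+=\mu_+(x_0)(x\cdot\nu_{x_0})_+$ and $(u_{x_0})_-=\mu_-(x_0)(x\cdot\nu_{x_0})_-$) handled correctly. The earlier speculation about cross terms is superfluous once you adopt that clean decomposition, and the lower bound $\mu_\pm^2\ge C_{\text{\tiny\sc Q}}^{-1}$ indeed lets you pass from the vector estimates to the separate bounds on $\nu$ and $\mu_\pm$.
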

\begin{proof}
The proof follows step by step the one of Lemma \ref{l:oscillation1}. 
\end{proof}

Reasoning as in the one-phase case, and using Lemma \ref{l:flatness2} and Lemma \ref{l:oscillation2}, one can prove that the two-phase free boundary $\Gamma_{\text{\tiny\sc tp}}$ is {\it contained} in a $C^{1,\alpha}$ curve. Unfortunately, this result by itself is not sufficient to deduce that $\partial\Omega_u^\pm$ are smooth. We now prove that the function $u_+$ (resp. $u_-$) is a solution of the one-phase free boundary problem
\begin{equation}\label{e:viscosity}
-\text{div}(A\nabla u_+)=f_+\quad\text{in}\quad \Omega_u^+,\qquad |A_{x_0}^{\sfrac12}\nabla u_+|(x_0)=\mu_+(x_0)\quad\text{for every}\quad x_0\in\partial\Omega_u^+
\end{equation}
where the boundary equation is understood in a classical sense. This is an immediate consequence of the following lemma which states that $u_+$ is differentiable in  $\Omega_u^+$ up to the boundary. 

\begin{lemma}[Differentiability at points of the free boundary]\label{l:differentiability}
Let $u$ be as in Theorem \ref{t:two-phase}.\\ We consider two cases. 

\noindent{\bf \small(OP)}. For every $x_0 \in(\partial\Omega_u^+\setminus \partial\Omega_u^-)\cap B_1$, $u_+$ is differentiable at $x_0$ and there is $r(x_0)>0$ such that  
$$ \big|u_+(x) - \mu_+(x_0)(x-x_0)\cdot \nu_{x_0}\big| \leq C|x-x_0|^{1+\gamma}\quad\text{for every} \quad x \in B_{r(x_0)}(x_0) \cap \Omega_u^+.$$
\noindent{\bf \small(TP)}. There exists a universal constant $r_0>0$ such that for every $x_0 \in\partial\Omega_u^+\cap \partial\Omega_u^-\cap B_1$, $u_+$ is differentiable in $B_{r_0}(x_0) \cap \Omega_u^+$, and
\begin{equation}\label{e:differentiabilityTP}
\big|u_+(x) - \mu_+(x_0)(x-x_0)\cdot \nu_{x_0}\big| \leq C|x-x_0|^{1+\gamma}\quad\text{for every} \quad x \in B_{r_0}(x_0) \cap \Omega_u^+.
\end{equation}
In particular, for every $x_0 \in\partial\Omega_u^+\cap B_1$, we have $\nabla u_+(x_0)=\mu_+(x_0)\nu_{x_0}$.
\end{lemma}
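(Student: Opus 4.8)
\textbf{Proof proposal for Lemma \ref{l:differentiability}.}

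The plan is to bootstrap from the $L^2$ (hence $L^\infty$, by Lipschitz bounds) rate of convergence of the blow-up sequences, already established in Proposition \ref{p:unique}, to a pointwise Taylor expansion of $u_+$ at free boundary points, and then to transfer this to interior points near the free boundary by interior elliptic estimates. Consider first case \textbf{\small(TP)}: fix $x_0\in\Gamma_{\text{\tiny\sc tp}}\cap B_1$ and let $u_{x_0}(x)=\mu_+(x_0)(x\cdot\nu_{x_0})_+$ be the (unique) positive part of the blow-up limit. By \eqref{e:decay2} and the uniform Lipschitz bound on $u_{x_0,r}$ and $u_{x_0}$, I would upgrade the $L^2$ decay to
\[\|u_{x_0,r}-u_{x_0}\|_{L^\infty(B_1)}\le Cr^{\gamma},\]
which, unrescaling, reads
\[\big|u_+(x_0+rx)-\mu_+(x_0)\,r\,(x\cdot\nu_{x_0})_+\big|\le Cr^{1+\gamma}\qquad\text{for }|x|\le 1.\]
Taking $x=(y-x_0)/r$ with $|y-x_0|=r$ gives exactly \eqref{e:differentiabilityTP} on the sphere $\partial B_r(x_0)$, and letting $r$ range over $(0,r_0)$ covers the whole punctured ball. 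For $y\in\Omega_u^+$ this shows $u_+(y)-\mu_+(x_0)(y-x_0)\cdot\nu_{x_0}=O(|y-x_0|^{1+\gamma})$, which is precisely differentiability of $u_+$ at $x_0$ with gradient $\mu_+(x_0)\nu_{x_0}$; the key point is that the universal radius $r_0$ and constant $C$ from Proposition \ref{p:unique} are uniform over $x_0\in\Gamma_{\text{\tiny\sc tp}}$, so the estimate is uniform. For case \textbf{\small(OP)}, i.e. $x_0\in\Gamma_+$, the same argument applies using \eqref{e:decay1} (valid for $u$ as in Theorem \ref{t:two-phase} at one-phase points, cf. Remark \ref{r:1-2}), except that the rate holds only for $r<\tfrac12\mathrm{dist}(x_0,\Omega_u^-)$, which forces the radius $r(x_0)$ to depend on $x_0$; this is harmless for the stated conclusion.

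It remains to pass from differentiability \emph{at} free boundary points to differentiability \emph{in} a full neighborhood $B_{r_0}(x_0)\cap\Omega_u^+$ — that is, to control $u_+$ at interior points $y$ whose distance $d(y):=\mathrm{dist}(y,\partial\Omega_u^+)$ may be much smaller than $|y-x_0|$. The standard device here is an interior gradient estimate combined with the expansion on the nearest boundary point. Let $p\in\partial\Omega_u^+$ realize $d(y)=|y-p|$. Since $u_+$ solves $-\mathrm{div}(A\nabla u_+)=f_+$ in $\Omega_u^+\cap B_2$ with Hölder $f_+$ and Hölder $A$, Schauder estimates on the ball $B_{d(y)/2}(y)\subset\Omega_u^+$ give $[\nabla u_+]_{C^{0,\delta}(B_{d(y)/4}(y))}\le C\big(d(y)^{-1-\delta}\|u_+\|_{L^\infty(B_{d(y)/2}(y))}+\|f_+\|_{C^{0,\delta}}\big)$; combined with the expansion \eqref{e:differentiabilityTP} at the boundary point $p$, which gives $\|u_+\|_{L^\infty(B_{d(y)/2}(y))}\le C\,d(y)\,|y-x_0|^{\gamma}$-type control plus the linear term, one obtains
\[\big|\nabla u_+(y)-\mu_+(p)\nu_p\big|\le C\,|y-x_0|^{\gamma'}\]
for some $\gamma'\in(0,\gamma]$, using that $|y-p|\le|y-x_0|$ and the Hölder oscillation of $p\mapsto(\mu_+(p),\nu_p)$ from Lemma \ref{l:oscillation2}. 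Integrating this gradient bound back along segments yields \eqref{e:differentiabilityTP} for all $x\in B_{r_0}(x_0)\cap\Omega_u^+$ and in particular shows $u_+\in C^1$ up to the boundary with $\nabla u_+(x_0)=\mu_+(x_0)\nu_{x_0}$, proving the final assertion; the same scheme works in case \textbf{\small(OP)} on the smaller ball $B_{r(x_0)}(x_0)$.

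The main obstacle I anticipate is precisely this last interpolation step: reconciling the boundary Taylor expansion (which is sharp only when $|y-x_0|\approx d(y)$) with the behavior deep inside a thin tongue of $\Omega_u^+$ near a two-phase point, where $d(y)\ll|y-x_0|$. One must be careful that the Schauder constant does not blow up faster than the gain from the $r^{1+\gamma}$ decay, and that the nondegeneracy of $u_\pm$ (Remark \ref{rk nondeg}, Definition \ref{d:non-degeneracy}) prevents pathological thin necks — nondegeneracy forces $u_+$ to grow linearly away from its zero set, which is what keeps $\|u_+\|_{L^\infty(B_{d(y)/2}(y))}$ comparable to $d(y)\,|\nabla u_+(y)|$ rather than something smaller. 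A clean way to organize this is to work entirely in the frozen coordinates $\bar u=u\circ F_{x_0}$ of Section \ref{s:freezing}, where the operator is the Laplacian up to a $C^{0,\delta}$ error, apply the blow-up decay there, and only at the very end undo the linear change of variables $A_{x_0}^{\sfrac12}$, which is bi-Lipschitz and hence preserves all the stated estimates (with the matrix $\nu_{x_0}=A_{x_0}^{-\sfrac12}[\nu]$ appearing exactly as in the statement). All remaining ingredients — strong $H^1$ and $L^1$ convergence of blow-ups (Proposition \ref{p:convergence}), the classification \eqref{e:blowup2} fixing $\mu_\pm(x_0)^2\ge Q^\pm_{\text{\tiny\sc tp}}(x_0)$, and the Hölder continuity of $x_0\mapsto(\mu_\pm,\nu)$ (Lemma \ref{l:oscillation2}) — are already available in the excerpt and enter only routinely.
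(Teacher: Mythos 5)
Your first paragraph is essentially the paper's proof: Proposition \ref{p:unique} already states the decay \eqref{e:decay2} in $L^\infty$ (no upgrade from $L^2$ is needed here), one passes to positive parts via $|\max\{0,a\}-\max\{0,b\}|\le |a-b|$, and then takes $r=|x-x_0|$ and rescales; the point-dependent radius in case {\bf\small(OP)} via Remark \ref{r:1-2} is also as in the paper. There is, however, one genuine step you skip. Your rescaled estimate controls $\bigl|u_+(y)-\mu_+(x_0)\bigl((y-x_0)\cdot\nu_{x_0}\bigr)_+\bigr|$, whereas \eqref{e:differentiabilityTP} involves the linear function $(y-x_0)\cdot\nu_{x_0}$ \emph{without} the positive part. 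For $y\in\Omega_u^+$ with $(y-x_0)\cdot\nu_{x_0}<0$ the two differ by $\mu_+(x_0)\,|(y-x_0)\cdot\nu_{x_0}|$, and to bound this by $C|y-x_0|^{1+\gamma}$ you must invoke the flatness Lemma \ref{l:flatness2}: with $r=|y-x_0|$, if $(y-x_0)\cdot\nu_{x_0}<-Cr^{1+\gamma}$ then $(y-x_0)/r\in\Omega^-_{x_0,r}$, contradicting $y\in\Omega_u^+$. The paper uses Lemma \ref{l:flatness2} for exactly this purpose; without it the asserted expansion does not follow from the blow-up decay alone.

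Your second and third paragraphs address a non-issue. The bound \eqref{e:differentiabilityTP} is a zeroth-order estimate on the \emph{values} of $u_+$, and it holds for every $x\in B_{r_0}(x_0)\cap\Omega_u^+$ directly from the rescaled decay at scale $r=|x-x_0|$ --- exactly as you obtained in your first paragraph --- with no restriction relating $|x-x_0|$ to $\mathrm{dist}(x,\partial\Omega_u^+)$. No interior Schauder interpolation, gradient oscillation bound, or integration along segments is needed: the lemma does not claim continuity of $\nabla u_+$ up to the boundary (that is discussed only after the lemma), differentiability at interior points of $\Omega_u^+$ is immediate from the equation $-\mathrm{div}(A\nabla u_+)=f_+$ with H\"older data, and differentiability \emph{at} $x_0$ relative to $\overline{\Omega}_u^+$ is just the expansion itself. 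So that part of your argument is superfluous (and, as written, vaguer than the rest); the proof is complete once the flatness step above is inserted.
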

\begin{proof}
The two cases are analogous. We will prove {\bf \small(TP)}. By Proposition \ref{p:unique}, for every $r<r_0$, we have 
\begin{equation*}
\|\max\{0,u_{x_0,r}\}-\max\{0,u_{x_0}\}\|_{L^\infty(B_1)}\le \|u_{x_0,r}-u_{x_0}\|_{L^\infty(B_1)} \leq Cr^{\gamma}.
\end{equation*}
Thus, using the flatness of the free boundary (Lemma \ref{l:flatness2}), we get for every $x\in B_1\cap\{u_{x_0,r}>0\}$
\begin{multline*}\label{eq Linfty esti}
|\max\{0,u_{x_0,r}(x)\}-\mu_+(x_0)x\cdot \nu_{x_0}|\le |\max\{0,u_{x_0,r}(x)\}-\max\{0,u_{x_0}(x)\}| \\
+ \mu_+(x_0)|\min\{0,x\cdot\nu_{x_0}\}| \leq Cr^{\gamma}.
\end{multline*}
Now, taking $r=|x-x_0|$ and rescaling the above inequality, we obtain \eqref{e:differentiabilityTP} 
\end{proof}
We notice that at the two-phase free boundary point the estimate \eqref{e:differentiabilityTP} holds in a ball whose  radius does not depend on the point. Moreover, on the two-phase free boundary the gradient has a universal modulus of continuity (see Lemma \ref{l:oscillation2}). 
We next show that $\mu_+$ is H\"older continuous on $\partial\Omega_u^+$. 

\begin{lemma}\label{l:holder}
The function $\mu_+: \partial \Omega_u^+ \rightarrow \R$ is (locally) H\"older continuous.
\end{lemma}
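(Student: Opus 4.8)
The plan is to prove H\"older continuity of $\mu_+$ on $\partial\Omega_u^+$ by distinguishing the one-phase points $\Gamma_+=(\partial\Omega_u^+\setminus\partial\Omega_u^-)\cap B_1$ from the two-phase points $\Gamma_{\text{\tiny\sc tp}}=\partial\Omega_u^+\cap\partial\Omega_u^-\cap B_1$, and then treating a mixed pair by a dichotomy on the mutual distance. On $\Gamma_{\text{\tiny\sc tp}}$ the estimate is already contained in Lemma \ref{l:oscillation2}; on $\Gamma_+$ one cannot directly invoke Proposition \ref{p:unique} with a uniform radius (see Remark \ref{r:1-2}), so the argument has to be localized to scales smaller than $\mathrm{dist}(x_0,\Omega_u^-)$.

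First I would fix a compact $K\Subset B_1$ and set $\alpha=\gamma/(1+\gamma)$ as in Lemma \ref{l:oscillation1}. For two points $x_0,y_0\in\Gamma_+\cap K$ with $d:=|x_0-y_0|$ small, let $\rho_0:=\min\{\mathrm{dist}(x_0,\Omega_u^-),\mathrm{dist}(y_0,\Omega_u^-)\}$. If $d\le \tfrac14\rho_0$, then on the ball $B_{\rho_0/2}(x_0)$ the function $u_+=u$ is genuinely a one-phase almost-minimizer, so the decay \eqref{e:decay1} of Proposition \ref{p:unique} applies at both $x_0$ and $y_0$ at all scales $r\le \rho_0/2$, and the proof of Lemma \ref{l:oscillation1} goes through verbatim (compare $u_{x_0,r}$ and $u_{y_0,r}$ using the Lipschitz bound, then the triangle inequality and \eqref{eq 6}) to give $|\mu_+(x_0)-\mu_+(y_0)|\le Cd^\alpha$. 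This is really Lemma \ref{l:oscillation1} applied in a smaller ball, so it costs nothing new.

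The remaining case is $d>\tfrac14\rho_0$, i.e. $x_0$ (say) is close to $\Gamma_{\text{\tiny\sc tp}}$: pick $z_0\in\overline{\Omega_u^-}$ with $|x_0-z_0|=\mathrm{dist}(x_0,\Omega_u^-)\le 4d$; then $z_0\in\partial\Omega_u^+\cap\partial\Omega_u^-=\Gamma_{\text{\tiny\sc tp}}$. The key point is that Lemma \ref{l:differentiability}(TP) gives the differentiability estimate $\big|u_+(x)-\mu_+(z_0)(x-z_0)\cdot\nu_{z_0}\big|\le C|x-z_0|^{1+\gamma}$ in a ball $B_{r_0}(z_0)$ whose radius $r_0$ is \emph{universal}, and the same at $y_0$ via its nearby two-phase point $z_0'\in\Gamma_{\text{\tiny\sc tp}}$ with $|y_0-z_0'|\le Cd$; combining these with the $C^\alpha$ control of $(\mu_+,\nu)$ on $\Gamma_{\text{\tiny\sc tp}}$ from Lemma \ref{l:oscillation2}, and using that $\nabla u_+(x_0)=\mu_+(x_0)\nu_{x_0}$ while $u_+$ is $C^1$ up to $\partial\Omega_u^+$ with the differentiability estimate centered at $z_0$, one controls $|\mu_+(x_0)\nu_{x_0}-\mu_+(z_0)\nu_{z_0}|$, hence $|\mu_+(x_0)-\mu_+(z_0)|$, by $Cd^\gamma$ (or $Cd^\alpha$). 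Then $|\mu_+(x_0)-\mu_+(y_0)|\le |\mu_+(x_0)-\mu_+(z_0)|+|\mu_+(z_0)-\mu_+(z_0')|+|\mu_+(z_0')-\mu_+(y_0)|$, and the middle term is $\le C|z_0-z_0'|^\alpha\le Cd^\alpha$ by Lemma \ref{l:oscillation2} (since $|z_0-z_0'|\le |z_0-x_0|+|x_0-y_0|+|y_0-z_0'|\le Cd$).

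I expect the main obstacle to be the bookkeeping in the mixed case: one must make sure that the ball radius $r_0$ in Lemma \ref{l:differentiability}(TP) does not shrink, that the nearby two-phase points $z_0,z_0'$ exist and lie in the compact set where Lemma \ref{l:oscillation2} applies, and that passing from the gradient bound $\nabla u_+(x_0)=\mu_+(x_0)\nu_{x_0}$ back to a scalar bound on $\mu_+$ is clean (here $\mu_+(x_0)=|A_{x_0}^{\sfrac12}\nabla u_+(x_0)|$ by \eqref{e:viscosity}, and $x\mapsto A_x^{\sfrac12}$ is $\delta$-H\"older, so a small extra H\"older error is harmless). Everything else — the Lipschitz interpolation between rescalings, the vector inequality \eqref{eq 6}, the triangle inequality — is exactly as in the proof of Lemma \ref{l:oscillation1}, so no new analytic input is needed beyond the uniform-radius differentiability of Lemma \ref{l:differentiability}.
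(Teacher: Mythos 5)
Your reduction of the two ``pure'' cases is fine (on $\Gamma_+$ you do not even need a localized Lemma \ref{l:oscillation1}: there $\mu_+=(Q_{\text{\tiny\sc tp}}^+)^{\sfrac12}$, which is H\"older by hypothesis), but the mixed case contains a genuine gap. Writing $\ell_{z_0}(x):=\mu_+(z_0)(x-z_0)\cdot\nu_{z_0}$, you propose to control $|\mu_+(x_0)\nu_{x_0}-\mu_+(z_0)\nu_{z_0}|$ by combining the estimate \eqref{e:differentiabilityTP} centered at $z_0$ with the identity $\nabla u_+(x_0)=\mu_+(x_0)\nu_{x_0}$. This does not follow: \eqref{e:differentiabilityTP} is only a sup bound $|u_+-\ell_{z_0}|\le C|x-z_0|^{1+\gamma}$, and sup-closeness of $u_+$ to a linear function together with mere differentiability of $u_+$ at the boundary point $x_0$ gives no control on $\nabla u_+(x_0)$ (a function can be uniformly small and still have a large derivative at a point; and $x_0$ lies on $\partial\Omega_u^+$, so interior gradient estimates are unavailable). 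To extract gradient information at $x_0$ you would need the quantitative expansion $|u_+-\ell_{x_0}|\le C|x-x_0|^{1+\gamma}$ at a scale comparable to $|x_0-z_0|$; but by Remark \ref{r:1-2} this is valid only for $r<\frac12\mathrm{dist}(x_0,\Omega_u^-)$, and its constant degenerates as $x_0$ approaches the negative phase (the quantity $I$ of Lemma \ref{l:decay} behaves like $r_0^{-\gamma}$ with $r_0\sim\mathrm{dist}(x_0,\Omega_u^-)$, since at scales above this distance the Weiss energy is close to the two-phase density rather than to its one-phase limit at $x_0$). Comparing the two linear models on the only ball where both are valid then yields $|\nabla\ell_{x_0}-\nabla\ell_{z_0}|\le C$, which is useless. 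A secondary issue: the nearest point of $\overline{\Omega}_u^-$ to $x_0$ need not lie on $\partial\Omega_u^+$, so your $z_0$ need not belong to $\Gamma_{\text{\tiny\sc tp}}$.

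The missing idea, which is how the paper proceeds, is to exploit the exact identity $\mu_+=(Q_{\text{\tiny\sc tp}}^+)^{\sfrac12}$ on $\Gamma_+$ and to prove that it persists at every $y_0\in\Gamma_{\text{\tiny\sc tp}}$ that is a limit of one-phase points. This is done by blowing up at one-phase points $x_n\to y_0$ at the scale $r_n=\mathrm{dist}(x_n,\Gamma_{\text{\tiny\sc tp}})$: on one hand $u^+_{x_n,r_n}$ solves a one-phase free boundary problem with gradient condition converging to $(Q_{\text{\tiny\sc tp}}^+)^{\sfrac12}(y_0)$, so the limit is a viscosity solution with that slope; on the other hand, the uniform-radius estimate \eqref{e:differentiabilityTP} applied at the projections $y_n\in\Gamma_{\text{\tiny\sc tp}}$ forces the limit to be the half-plane solution of slope $\mu_+(y_0)$; hence $\mu_+(y_0)=(Q_{\text{\tiny\sc tp}}^+)^{\sfrac12}(y_0)$. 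Once this is known, the mixed case follows from the H\"older continuity of $(Q_{\text{\tiny\sc tp}}^+)^{\sfrac12}$ and of $\mu_+|_{\Gamma_{\text{\tiny\sc tp}}}$ by projecting onto the transition set, with no gradient comparison across the one-phase/two-phase interface.
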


\begin{proof}
We infer that lemma \ref{l:holder} is a consequence of the following claim: if $(x_n)_{n\ge1}$ is a sequence of one-phase points, $x_n\in \Gamma_+$, converging to a two-phase point $y_0 \in \Gamma_{\text{\tiny\sc tp}}$, then $\mu_+(y_0)=Q_+^{\sfrac12}(y_0)$, where we set $Q_+:=Q_{\text{\tiny\sc tp}}^+$.
Indeed, we we notice that: 

$\bullet$ on the set $\Gamma_+$, we have $\mu_+=Q_+^{\sfrac12}$. 

$\bullet$ for every $y_1,y_2\in \Gamma_{\text{\tiny\sc tp}}$, we have $|\mu_+(y_1)-\mu_+(y_2)|\le C|y_1-y_2|^\alpha$.
\smallskip

\noindent By the first bullet $\mu_+$ is H\"older continuous on the open subset $\Gamma_+$ of $\partial\O_u\cap B_1$. To prove that $\mu_+$ is H\"older continuous on $\Gamma_{\text{\tiny\sc tp}}$, let $y_0\in \Gamma_{\text{\tiny\sc tp}}$ and $x,y\in\partial\O^+_u\cap B_{r_0}(y_0)$; we have to show that $|\mu_+(x)-\mu_+(y)|\leq C|x-y|^\alpha$. By the two bullets, it is obvious if either $x,y \in \Gamma_+$ or $x,y\in \Gamma_{\text{\tiny\sc tp}}$. Then, assume that $x\in \Gamma_+$ and $y\in \Gamma_{\text{\tiny\sc tp}}$ and denotes by $y_1$ the projection of $y$ on the closed set $\Gamma$, where $\Gamma$ is the set of points $z$ in $\Gamma_{\text{\tiny\sc tp}}$ such that every neighborhood of $z$ has non-empty intersection with $\Gamma_+$. Note that, by definition of $y_1$, we have $|y_1-y|\leq|x-y|$ and then $|x-y_1|\leq2|x-y|$. Therefore, using the triangular inequality and the claim we get
\[|\mu_+(x)-\mu_+(y)|\leq |Q_+^{\sfrac12}(x)-Q_+^{\sfrac12}(y_1)|+|\mu_+(y_1)-\mu_+(y)| \leq C|x-y|^\alpha.\]

We now prove the claim. Up to a linear change of coordinates we may suppose that $A_{y_0}=Id$.   
Denote by $y_n$ the projection of $x_n$ on the closed set $ \partial \Omega_u^+\cap \partial \Omega_u^-$ and set $r_n:=|x_n-y_n|$. Since $u$ is Lipschitz continuous, up to a subsequence, $u_n:=u_{x_n,r_n}^+$ converges locally uniformly to some function $u_\infty$. The absence of two-phase points in $B_{r_n}(x_n)$ implies that $u_n$ is a solution of
\[ -\text{div}(A_n\nabla u_n) = r_n f_n\quad \text{in}\quad \{u_n>0\}\cap B_1\,, \qquad |\nabla u_n| = q_n\quad\text{on}\quad \partial \{u_n>0\}\cap B_1\,, \]
where $A_n(x):=A(x_n+r_nx)$, $f_n(x):=f_+(x_n+r_nx)$ and $q_n(x)=Q_+^{\sfrac12}(x_n+r_nx)|\nu_{x_n+r_nx}|$, where we recall that $\nu_{x_n+r_nx}$ is of the form $A_{x_n+r_nx}^{\sfrac12}[\tilde\nu]$, for some $\tilde \nu\in\partial B_1$. Passing to the limit as $n\to\infty$, we obtain that $u_\infty$ is a viscosity solution to
\[ -\Delta u_\infty = 0\quad\text{in}\quad \{u_\infty>0\}\cap B_1\,, \qquad |\nabla u_\infty| = Q_+^{\sfrac12}(y_0)|\nu_{y_0}|\quad \text{on}\quad \partial \{u_\infty>0\}\cap B_1\,. \]
On the other hand, for every $\xi \in B_1$, we have
\[ u_{x_n,r_n}(\xi) = u_{y_n,r_n}(\xi + \xi_n)\,,\quad \text{where}\quad \xi_n := \frac{x_n-y_n}{r_n}\in \partial B_1\,, \]
and, up to a subsequence, we can assume that $\xi_n$ converges to some $\xi_\infty \in \partial B_1$.
Since $y_n \in \partial \Omega_u^+\cap \partial \Omega_u^-$, Lemma \ref{l:differentiability} implies that, for every $x \in B_{2r_n}(y_n) \cap \{u>0\}$, we have 
\[ |u(x)-\mu_+(y_n)\max\{0,(x-y_n)\cdot \nu_{y_n}\}| \leq C |x-y_n|^{1+\gamma} \leq Cr_n^{1+\gamma}. \]
After rescaling, this gives
$$ \big|u_{y_n,r_n}(\xi+\xi_n) - \mu_+(y_n)\max\{0,(\xi+\xi_n)\cdot \nu_{y_n}\}\big| \leq Cr_n^{\gamma}\quad \text{for every}\quad \xi \in B_1 \cap \{u_{x_n,r_n}>0\}. $$
Moreover, by the continuity of $\mu_+$ on $\partial \Omega_u^+\cap \partial \Omega_u^-$, we have that, for every $\xi\in B_1$,
$$\lim_{n\to\infty}\big|\mu_+(y_n)\max\{0,(\xi+\xi_n)\cdot \nu_{y_n}\} - \mu_+(y_0)\max\{0,(\xi+\xi_\infty)\cdot \nu_{y_0}\}\big| =0.$$
Therefore, it follows that $u_{x_n,r_n}(\xi)=u_{y_n,r_n}(\xi+\xi_n)$ converges to 
$$u_\infty(\xi)=\mu_+(y_0)\max\big\{0,(\xi+\xi_\infty)\cdot \nu_{y_0}\big\}\quad\text{for every}\quad \xi \in B_1.$$
Next we claim that $\xi_\infty \cdot \nu_{y_0} =0$. Indeed, if $\xi_\infty \cdot \nu_{x_0}>0$, then $u_\infty(0)>0$ which is in contradiction with the uniform convergence of $u_n$; on the other hand, if $\xi_\infty \cdot e_{x_0}<0$, then  $u_\infty\equiv0$ in a neighborhood of zero, which is in contradiction with the non-degeneracy of $u_n$. Thus, we get 
$$u_\infty(\xi)=\mu_+(y_0)\max\big\{0, \xi\cdot \nu_{y_0}\big\}\quad\text{for every}\quad \xi \in B_1.$$
Now since $|\nabla u_\infty|=\mu_+(y_0)$, we get that $\mu_+(y_0)=Q_+^{\sfrac12}(y_0)$.
\end{proof}

Theorem \ref{t:two-phase} is now a consequence of \eqref{e:viscosity}, the Lemma \ref{l:holder} and a general result (Theorem \ref{t:desilva}) on the regularity of the one-phase flat free boundaries, which is due to De Silva (see \cite{desilva}). In the appendix we state Theorem \ref{t:desilva} in its full generality, for viscosity solutions of the problem \eqref{e:viscosity}, but in our case the function $u_+$ is a classical solution, differentiable everywhere on $\overline\Omega_u^+$. 

\section{Proof of Theorem \ref{t:multi}}\label{s:multi}
\subsection{Preliminary results} In this subsection, we briefly recall the known results on the problem \eqref{e:multi}. 
The existence of a solution of \eqref{e:multi} in the class of the almost-open subsets of $\mathcal D$ can be proved by a general variational argument (we refer to \cite{bucve} and to the book \cite{bubu05} for more details). In the context of open sets, the existence of an optimal $n$-uple was proved in \cite{benve}. 

From now on, $(\Omega_1,\dots,\Omega_n)$ will be a solution of \eqref{e:multi} and $u_i:\R^d\to\R$, for $i=1,\dots,n$, will denote the first normalized eigenfunction of the Dirichlet Laplacian on $\Omega_i$, that is, 
$$-\Delta u_i=\lambda_1(\Omega_i) u_i\quad\text{in}\quad\Omega_i\,,\qquad u_i=0\quad\text{on}\quad \R^2\setminus\Omega_i\,,\qquad \int_{\Omega_i} u_i^2\,dx=1,$$
where, for every $i=1,\dots,n$, 
$$\lambda_1(\Omega_i)=\min_{u\in H^1_0(\Omega_i)\setminus\{0\}}\frac{\int_{\Omega_i} |\nabla u|^2\,dx}{\int_{\Omega_i} u^2\,dx}=\frac{\int_{\Omega_i} |\nabla u_i|^2\,dx}{\int_{\Omega_i} u_i^2\,dx},$$
where $H^1_0(\Omega_i)=\{u\in H^1(\R^d)\,:\,u=0\ \text{on}\ \R^2\setminus\Omega_i\}$. In particular,  
$u_i\ge 0$ on $\R^2$ and $\Omega_i=\{u_i>0\}$. 
\medskip

\noindent{\it Lipschitz continuity.} The functions $u_i:\R^2\to\R$ are Lipschitz continuous on $\R^2$, that is, there is a universal constant $L>0$ such that $\|\nabla u_i\|_{L^\infty(\R^2)}\le L$, for every $i=1,\dots,n$. We refer to \cite{bucve} for the general case and to \cite{benve} for a simplified version in dimension two. 
\medskip


\noindent{\it Absence of triple points.} For every $1\le i<j<k\le n$, we have that $\partial\Omega_i\cap\partial\Omega_j\cap\partial\Omega_k=\emptyset$ (see \cite{bucve} and \cite{benve} for a simpler proof in dimension two). 
\medskip

\noindent{\it Absence of two-phase points on the boundary of the box.} For every $1\le i<j\le n$, we have that $\partial\Omega_i\cap\partial\Omega_j\cap\partial \mathcal D=\emptyset$ (see \cite{benve}). 
\medskip

As a consequence of the above properties, we have that, for every $i\in\{1,\dots,n\}$, the boundary $\partial\Omega_i$ can be decomposed as follows: 
$$\partial\Omega_i= \bigcup_{k\neq i}(\partial\Omega_i\cap\partial\Omega_k)\cup (\partial\Omega_i\cap\partial \mathcal D)\cup\Gamma_{\text{\tiny\sc op}}(\Omega_i),$$
where $\Gamma_{\text{\tiny\sc op}}(\Omega_i)$ is the one-phase free boundary of $\Omega_i$, determined by: 
$$x_0\in \Gamma_{\text{\tiny\sc op}}(\Omega_i)\Leftrightarrow \text{there exists}\ r>0\ \text{such that}\ B_r(x_0)\cap \Big((\R^2\setminus \mathcal D)\cup \bigcup_{k\neq i}\Omega_k\Big)=\emptyset.$$ 
We notice that already using the the regularity result of Brian\c con and Lamboley \cite{brla}, the one-phase free boundary (lying inside the open set $\mathcal D$) is locally a $C^{1,\alpha}$ curve. Thus, in order to prove Theorem \ref{t:multi}, it will be sufficient to show that $\partial \Omega_i$ is $C^{1,\alpha}$ in a neighborhood of the points of $\partial\Omega_i\cap\partial \mathcal D$ (Subsection \ref{sub:constrained}) and $\partial\Omega_i\cap\partial\Omega_k$ (Subsection \ref{sub:two-phase}). 

\subsection{One-phase points at the boundary of the box}\label{sub:constrained}
Let $1\le i\le n$ and $x_0\in\partial D\cap\partial\Omega_i$. Then, there is a neighborhood $\mathcal U$ of $x_0$ such that $\mathcal U\cap\Omega_j=\emptyset $, for every $j\neq i$. For the sake of simplicity, in this subsection, we will set 
$$\Omega=\Omega_i\,,\quad u=u_i\,,\quad x_0=0\quad\text{and}\quad D=\Omega_i\cup(\mathcal D\cap\mathcal U).$$ 
It is well known that the eigenvalues of the Dirichlet Laplacian are variationally characterized by
\[ \lambda_1(\O) = \int_\O |\nabla u|^2\,dx = \min \left\{\int_\O |\nabla v|^2\,dx : v \in H^1_0(\O), \int_\O v^2\,dx =1\right\}.\]
Moreover, $\{u>0\}=\O$ and $u$ is a solution of the following minimization problem:
\begin{equation}\label{e:eq_min_u}
\min \left\{ \int_D |\nabla v|^2\,dx + \Lambda |\{v>0\}| : v \in H^1_0(D), \ \int_D v^2\,dx =1 \right\}.
\end{equation}
We will show that the solution $u$ of \eqref{e:eq_min_u} is an almost-minimizer of the one-phase functional $J_{\text{\tiny\sc op}}$. 
A result in the same spirit was proved given in a more general case in \cite[Proposition 2.1]{mateve}.

\begin{lemma}[Almost-minimality of the eigenfunction]\label{l:almost_almost_u}
Let $u:\R^d\to\R$ be a Lipschitz continuous function, $L=\|\nabla u\|_{L^\infty}$ be the Lipschitz constant of $u$ and $\displaystyle\lambda_1(\Omega_u)=\int_D|\nabla u|^2\,dx$. If $u$ is a solution of the minimization problem \eqref{e:eq_min_u}, then there exists $r_0>0$ such that $u$ satisfies the following almost-minimality condition:
\medskip

\noindent For every $r\in(0,r_0)$ and $x_0\in\partial\Omega_u$,
\begin{align*}\label{e:almost_almost_u}
\int_{B_r(x_0)} |\nabla u|^2\,dx& + \Lambda |\Omega_u\cap B_r(x_0)| \le \big(1+C_1r^{d+2}\big)\int_{B_r(x_0)} |\nabla v|^2\,dx +\Lambda|\Omega_v\cap B_r(x_0)|+C_2r^{d+2},
\end{align*}
for every $v \in H^1_0(D)$ such that $u=v$ on $D\setminus B_r(x_0)$, where $C_1=2L^2$ and $C_2=\lambda_1(\Omega_u)2L^2$.
\end{lemma}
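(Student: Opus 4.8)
The plan is to prove the almost-minimality condition by comparing $u$ with a renormalization of an arbitrary competitor $v$. The key point is that any competitor $v\in H^1_0(D)$ with $u=v$ on $D\setminus B_r(x_0)$ need not satisfy the constraint $\int_D v^2\,dx=1$, so one cannot use $v$ directly in \eqref{e:eq_min_u}. First I would normalize: set $\tilde v := v/\|v\|_{L^2(D)}$, which is admissible in \eqref{e:eq_min_u}, hence
$$\int_D|\nabla u|^2\,dx+\Lambda|\Omega_u|\le \int_D|\nabla \tilde v|^2\,dx+\Lambda|\Omega_{\tilde v}|=\frac{1}{\|v\|_{L^2}^2}\int_D|\nabla v|^2\,dx+\Lambda|\Omega_v|,$$
using that $\Omega_{\tilde v}=\Omega_v$. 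Since $u=v$ outside $B_r(x_0)$ and $\int_D u^2\,dx=1$, we have $\big|\|v\|_{L^2(D)}^2-1\big|=\big|\int_{B_r(x_0)}(v^2-u^2)\,dx\big|$, which we need to control.

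Next I would estimate this $L^2$-defect. The competitor $v$ may be assumed to have energy no larger than $u$ on $B_r(x_0)$ (otherwise the inequality to prove is trivial), so $\|v\|_{H^1(B_r(x_0))}$ is bounded in terms of $\|u\|_{H^1(B_r(x_0))}\lesssim L r^{d/2}$ (using the Lipschitz bound and $u(x_0)=0$ so that $\|u\|_{L^\infty(B_r(x_0))}\le Lr$). Hence $\big|\int_{B_r(x_0)}(v^2-u^2)\,dx\big|\le C(L)\,r^{d+2}$ for $r$ small, where one uses $\|u\|_{L^2(B_r)}^2\le C L^2 r^{d+2}$ and a similar bound for $v$ obtained, e.g., via Poincaré on $B_r(x_0)$ since $v-u\in H^1_0(B_r(x_0))$ together with the energy comparison. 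Write $\|v\|_{L^2(D)}^2=1+\eta_r$ with $|\eta_r|\le C L^2 r^{d+2}$; then for $r_0$ small enough $\frac{1}{1+\eta_r}\le 1+2|\eta_r|\le 1+C_1 r^{d+2}$ with $C_1=2L^2$ (after absorbing constants appropriately, or taking $C_1$ slightly larger — the precise constant is not essential).

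Finally I would assemble: combining the displayed inequality above with the bound on $\eta_r$, and cancelling the common contribution of $u=v$ on $D\setminus B_r(x_0)$ from both sides, yields
$$\int_{B_r(x_0)}|\nabla u|^2\,dx+\Lambda|\Omega_u\cap B_r(x_0)|\le (1+C_1 r^{d+2})\Big(\int_{B_r(x_0)}|\nabla v|^2\,dx+\Lambda|\Omega_v\cap B_r(x_0)|\Big)+C_1 r^{d+2}\lambda_1(\Omega_u),$$
where the last term arises because $\frac{1}{1+\eta_r}\int_D|\nabla v|^2\,dx\ge \int_D|\nabla v|^2\,dx-C_1 r^{d+2}\int_D|\nabla v|^2\,dx$ and $\int_D|\nabla v|^2\,dx$ differs from $\lambda_1(\Omega_u)=\int_D|\nabla u|^2\,dx$ only by a bounded amount (again using the energy comparison on $B_r(x_0)$, which is itself $O(L^2 r^d)$). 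This gives the claimed form with $C_2=2L^2\lambda_1(\Omega_u)$, matching the statement up to the harmless discrepancy between $d+2$ here and the exponents in \eqref{e:almost11}. The main obstacle is the careful bookkeeping of the normalization error: one must ensure every error term genuinely carries a factor $r^{d+2}$ (and not just $r^d$), which forces the use of $u(x_0)=0$ and the Poincaré inequality on $B_r(x_0)$ rather than crude $L^\infty$ bounds, and one must handle the case where $v$ has large energy on $B_r(x_0)$ separately (where the inequality holds trivially).
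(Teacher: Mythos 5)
Your proposal is correct and follows essentially the same route as the paper: renormalize the competitor to make it admissible in \eqref{e:eq_min_u}, use minimality, control the normalization factor via $u(x_0)=0$ and the Lipschitz bound, and cancel the common contributions outside $B_r(x_0)$. The one place where you work harder than necessary is the two-sided estimate of $\big|\|v\|_{L^2(D)}^2-1\big|$: since the minimality inequality only involves the factor $\|v\|_{L^2(D)}^{-2}$, you only need the \emph{lower} bound $\|v\|_{L^2(D)}^2=1-\int_{B_r(x_0)}u^2\,dx+\int_{B_r(x_0)}v^2\,dx\ge 1-\int_{B_r(x_0)}u^2\,dx\ge 1-CL^2r^{d+2}$, which uses nothing about $v$ beyond $v^2\ge0$. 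This makes the Poincar\'e estimate on $v-u$, the energy comparison, and the separate treatment of high-energy competitors all superfluous (and avoids the extra dependence of the constants on $\Lambda$ that your version would introduce); otherwise the bookkeeping and the final constants $C_1=2L^2$, $C_2=2L^2\lambda_1(\Omega_u)$ come out exactly as in the paper.
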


\begin{proof}
Let $x_0\in\partial\Omega_u$, $r>0$ and $v\in H^1_0(D)$ be such that $u=v$ on $D\setminus B_r(x_0)$. Then, define the renormalization $w = \|v\|_{L^2}^{-1}v \in H^1_0(D)$ and notice that we have  
\begin{align*}
\int_D|\nabla w|^2\,dx&=\Big(\int_Dv^2\,dx\Big)^{-1}{\int_D|\nabla v|^2\,dx}\le \Big(1-\int_{B_r(x_0)}u^2\,dx\Big)^{-1}{\int_D|\nabla v|^2\,dx}\\
&\le \frac1{1-L^2r^{d+2}}{\int_D|\nabla v|^2\,dx}\le\big(1+2L^2r^{d+2}\big){\int_D|\nabla v|^2\,dx},
\end{align*}
where for the last inequality, we choose $r_0$ such that $2L^2r_0^{d+2}\le 1$ and we use the inequality $\displaystyle\frac{1}{1-X}\le 1+2X$, for every $X\le \sfrac12$, with $X=L^2r^{d+2}$. Now use $w$ as a test function in \eqref{e:eq_min_u} to get that 
\begin{equation}\label{e:almost_almost_uhuhu}
\int_D |\nabla u|^2\,dx + \Lambda |\{u>0\}| \leq \big(1+2L^2r^{d+2}\big)\int_D |\nabla v|^2\,dx +\Lambda|\{v>0\}|,
\end{equation}
from which the claim easily follows since $\displaystyle \int_{D\setminus B_r(x_0)} |\nabla v|^2\,dx=\int_{D\setminus B_r(x_0)} |\nabla u|^2\,dx\le \lambda_1(\Omega_u)$. 
\end{proof}

We now notice that the $C^2$ regularity of $\partial\mathcal D$ implies that there is a constant $\delta>0$ and a function $g:(-\delta,\delta)\to\R$ such that 
$$\mathcal D\cap SQ_{\delta}=\big\{(x_1,x_2)\in SQ_{\delta}\,:\, g(x_1)<x_2\big\},$$
where $SQ_{\delta}=(-\delta,\delta)\times (-\delta,\delta)$. Moreover, up to a rotation of the plane, we can assume that $g'(0)=0$. 
Let $\psi : SQ_{\delta}\subset \R^2 \rightarrow \R^2$ be the function that straightens out the boundary of $\mathcal D$ and let $\phi = \psi^{-1} : \psi( SQ_{\delta}) \subset \R^2 \rightarrow \R^2$ be its inverse: 
\[ \psi(x_1,x_2)=(x_1,x_2-g(x_1)), \quad \phi(x_1,x_2)=(x_1,x_2+g(x_1)). \]
We define the matrix-valued function $A=(a_{ij})_{ij}:SQ_\delta\to M^2(\R)$ by
\[
A_x:=\begin{pmatrix}
a_{11}(x) & a_{12}(x) \\
a_{21}(x) & a_{22}(x)
\end{pmatrix}= 
\begin{pmatrix}
1 & -g'(x_1) \\
-g'(x_1) & 1 + (g'(x_1))^2
\end{pmatrix}
\quad\text{for every}\quad x=(x_1,x_2)\in SQ_\delta. \]
We recall that $H=\{(x_1,x_2)\in\R^2 : x_2>0\}$. By an elementary change of coordinates, we obtain the following result. 
\begin{lemma}\label{l:almost_almost_tilde_u}
Let $u$ and $A$ be as above. There exist constants $C_1,C_2>0$ and $r_0>0$ such that $B_{2r_0}\subset \psi(SQ_\delta)$ and the function $\tilde{u} := u \circ \phi$ satisfies the following almost-minimality condition:
\begin{itemize}[label=]
	\item For every $x_0\in \partial\O_{\tilde{u}}\cap B_{r_0}$ and $r\in(0,r_0)$ we have
	\begin{multline*}
	\int_{B_r(x_0)} a_{ij}(x)\,\frac{\partial\tilde{u}}{\partial x_i}\,\frac{\partial \tilde{u}}{\partial x_j} \,dx + \Lambda |\Omega_{\tilde u} \cap B_r(x_0)| \\
	\leq (1+C_1r^{d+2})\int_{B_r(x_0)} a_{ij}(x)\,\frac{\partial\tilde{v}}{\partial x_i}\,\frac{\partial \tilde{v}}{\partial x_j}\,dx + \Lambda |\Omega_{\tilde v} \cap B_r(x_0)| + C_2r^{d+2},
	\end{multline*} 
	for every $\tilde{v} \in H^1(B_{2r_0})$ such that $\tilde{u}=\tilde{v}$ on $B_{2r_0}\setminus B_r(x_0)$ and $\Omega_{\tilde v}\subset H$.
\end{itemize}
\end{lemma}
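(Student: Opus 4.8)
The plan is to deduce the statement from the almost-minimality of $u$ on Euclidean balls, which is Lemma \ref{l:almost_almost_u}, by the change of variables $\tilde u=u\circ\phi$. The key remark is that $\phi$ (equivalently $\psi=\phi^{-1}$) has been designed so that it turns the Euclidean Dirichlet energy into the $A$-energy \emph{exactly}; the only genuine work is then to control the mild, non-conformal distortion of balls under $\phi$.

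First I would record the algebraic identity behind the definition of $A$. The Jacobian $D\psi$ is lower triangular, with rows $(1,0)$ and $(-g'(x_1),1)$; in particular it depends only on $x_1$, and since $\phi$ too preserves the first coordinate one has $D\psi(\phi(x))=D\psi(x)$, $\det D\phi\equiv 1$, and a direct computation gives $D\psi(x)\,D\psi(x)^{T}=A_x$. Hence, for every Borel set $E\subset SQ_\delta$ and every $w\in H^1_{loc}$, writing $\tilde w:=w\circ\phi$, the substitution $y=\phi(x)$ gives the \emph{exact} identities
\[
\int_{\phi(E)}|\nabla w|^2\,dy=\int_{E}a_{ij}(x)\,\frac{\partial\tilde w}{\partial x_i}\,\frac{\partial\tilde w}{\partial x_j}\,dx
\qquad\text{and}\qquad
\big|\{w>0\}\cap\phi(E)\big|=\big|\{\tilde w>0\}\cap E\big|.
\]
Moreover $\psi$ sends $\mathcal D$ to $H$ and $\partial\Omega_u$ to $\partial\Omega_{\tilde u}$ near $0$, so $z_0:=\phi(x_0)\in\partial\Omega_u$ whenever $x_0\in\partial\Omega_{\tilde u}$, and $\Omega_{\tilde v}\subset H$ corresponds to $\Omega_v\subset\mathcal D$ for the transported competitor $v:=\tilde v\circ\psi$.

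Next I would quantify the distortion. Since $\mathcal D$ is $C^2$, $g$ is $C^2$ with $g(0)=g'(0)=0$, so after shrinking $r_0$ (which also gives $B_{2r_0}\subset\psi(SQ_\delta)$) we may assume $|g'|\le 2Mr_0$ on $(-2r_0,2r_0)$, where $M:=\|g''\|_{L^\infty}$, with $Mr_0$ as small as needed; a short computation in the $\ell^2$ norm then shows that $\phi$ and $\psi$ are bi-Lipschitz on $SQ_{2r_0}$ with constants in $[1-Cr_0,\,1+Cr_0]$. Consequently, for $x_0\in\partial\Omega_{\tilde u}\cap B_{r_0}$, $r\in(0,r_0)$ and $z_0:=\phi(x_0)$, one has $\phi(B_r(x_0))\subset B_\rho(z_0)\subset B_{2r_0}$ with $\rho:=(1+Cr_0)r\le 2r$. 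Given a competitor $\tilde v\in H^1(B_{2r_0})$ with $\tilde u=\tilde v$ on $B_{2r_0}\setminus B_r(x_0)$ and $\Omega_{\tilde v}\subset H$, I would set $v:=\tilde v\circ\psi$ on $\phi(B_{2r_0})$ and $v:=u$ elsewhere; since $\tilde v=\tilde u$ in a collar of $\partial B_{2r_0}$ the two pieces glue to a function $v\in H^1_0(D)$ with $\Omega_v\subset D$ and $v=u$ on $D\setminus B_\rho(z_0)$. Applying Lemma \ref{l:almost_almost_u} at $z_0$ with radius $\rho<r_0$ and competitor $v$, cancelling the contributions over $B_\rho(z_0)\setminus\phi(B_r(x_0))$ where $u\equiv v$, and bounding $\int_{B_\rho(z_0)}|\nabla v|^2\le\lambda_1(\Omega_u)+\int_{\phi(B_r(x_0))}|\nabla v|^2$, $|\Omega_v\cap B_\rho(z_0)|\le|B_\rho|$ and $\rho\le 2r$ to absorb the errors, one arrives at the same almost-minimality inequality over $\phi(B_r(x_0))$, with constants $C_1,C_2$ depending only on $L$, $\lambda_1(\Omega_u)$, $\Lambda$ and $M$. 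Finally, transforming back by the exact identities of the first step with $E=B_r(x_0)$ and $w=u,v$ yields precisely the claimed condition for $\tilde u$.

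The step I expect to be most delicate is the last one: one must check that the competitor $v=\tilde v\circ\psi$ is genuinely admissible (that it glues into an element of $H^1_0(D)$ with $\Omega_v\subset D$) and, above all, that replacing the round ball $B_r(x_0)$ by the slightly distorted set $\phi(B_r(x_0))$---together with the extra energy and volume captured in the thin shell $B_\rho(z_0)\setminus\phi(B_r(x_0))$---produces only errors that fit into the multiplicative factor $1+C_1r^{d+2}$ and the additive term $C_2r^{d+2}$. By contrast, the passage between the Euclidean and the $A$-energy is an exact identity, which is exactly why $A$ was defined in that particular form.
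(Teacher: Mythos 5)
Your argument is correct and follows essentially the same route as the paper: transport the competitor $\tilde v$ to $v:=\tilde v\circ\psi$, apply the Euclidean almost‑minimality (Lemma~\ref{l:almost_almost_u}) at $y_0=\phi(x_0)$ on a slightly larger Euclidean ball containing $\phi(B_r(x_0))$, cancel the contribution of the annulus where $u\equiv v$, and pull the remaining integrals back by the exact identity $D\psi\,D\psi^T=A$. Two small remarks: the paper does not verify the exact algebraic identity and the bi-Lipschitz distortion as explicitly as you do (it just invokes a constant $c_\phi$ with $\phi(B_r(x_0))\subset B_{c_\phi r}(y_0)$), so your version is a bit more transparent; on the other hand your intermediate claim ``$B_\rho(z_0)\subset B_{2r_0}$'' mixes the two coordinate systems and is neither needed nor quite correct as written --- what is actually used is only that $\phi(B_r(x_0))\subset B_\rho(z_0)$, that $v\in H^1_0(D)$ with $v=u$ on $D\setminus B_\rho(z_0)$, and that $\rho$ is below the threshold radius of Lemma~\ref{l:almost_almost_u}, all of which you do establish.
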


\begin{proof}
Let $x_0\in B_{r_0}$, $r\in(0,r_0)$ and $\tilde{v}$ such that $\tilde{u}=\tilde{v}$ on $B_{2r_0}\setminus B_r(x_0)$. Then, use $v \in H^1_0(D)$ defined by $v=\tilde{v}\circ\psi$ in $\psi^{-1}(B_{2r_0})$ and $v=u$ otherwise, as a test function in Lemma \ref{l:almost_almost_u} to get
\[ \int_{B_{c_\phi r}(y_0)} |\nabla u|^2\,dx + \Lambda|\Omega_u\cap B_{c_\phi r}(y_0)| \leq (1+C_1r^{d+2})\int_{B_{c_\phi r}(y_0)}|\nabla v|^2\,dx + \Lambda|\Omega_v\cap B_{c_\phi r}(y_0)| + Cr^{d+2}, \]
where $c_\phi$ is a positive constant depending only on $\phi$ such that $\phi(B_r(x_0))\subset B_{c_\phi r}(y_0)$ and $y_0=\phi(x_0)$. Now, with a change of coordinates and noticing that $u=v$ on $\phi(B_r(x_0))$ we have
\begin{align*}
\int_{B_r(x_0)} a_{ij}(x)\,\frac{\partial\tilde{u}}{\partial x_i}\,\frac{\partial \tilde{u}}{\partial x_j} \,dx + &\Lambda |\Omega_{\tilde u} \cap B_r(x_0)| 
= \int_{\phi(B_r(x_0))}|\nabla u|^2\,dx + \Lambda|\Omega_u\cap\phi(B_r(x_0))| \\
&\leq (1+C_1r^{d+2})\int_{\phi(B_r(x_0))}|\nabla v|^2\,dx + \Lambda|\Omega_v\cap \phi(B_r(x_0))| + C_2r^{d+2} \\
&= (1+C_1r^{d+2})\int_{B_r(x_0)} a_{ij}(x)\,\frac{\partial\tilde{v}}{\partial x_i}\,\frac{\partial \tilde{v}}{\partial x_j}\,dx + \Lambda |\Omega_{\tilde v} \cap B_r(x_0)| + C_2r^{d+2},
\end{align*}
where $C_2 = \lambda_1(\Omega_u)C_1 + C$. This concludes the proof.
\end{proof}

\begin{proof}[Proof of Theorem \ref{t:multi} (the one-phase boundary points)] We are now in position to conclude the regularity of the free boundary $\partial\Omega_i$ in a neighborhood of any one-phase boundary point $x_0\in\partial \Omega_i\cap\partial\mathcal D$. Indeed, we may assume that $x_0=0$ and that $\partial \mathcal D$ is the graph of a function $g$. Reasoning as above, we have that $\tilde u_i(x_1,x_2)=u_i(x_1, x_2+g(x_1))$ satisfies the almost-minimality condition from Lemma \ref{l:almost_almost_tilde_u} in a neighborhood of the origin. On the other hand, it is immediate to check that $\tilde u_i$ is still Lipschitz continuous. Thus, we can apply Theorem \ref{t:constrained} obtaining that, in a neighborhood of zero, $\partial\Omega_i$ is the graph of a $C^{1,\alpha}$ function.   
\end{proof}

\subsection{Two-phase points}\label{sub:two-phase}
Let $\Omega_i$ and $\Omega_j$ be two different sets from the optimal $n$-uple $(\Omega_1,\dots,\Omega_n)$, solution of \eqref{e:multi}. Let $u_i$ and $u_j$ be the first normalized eigenfunctions, respectively on $\Omega_i$ and $\Omega_j$. Finally, let $x_0\in\partial \Omega_i\cap\partial\Omega_j$. We know that there is a neighborhood $\mathcal U\subset\mathcal D$ of $x_0$ such that $\mathcal U\cap\Omega_k=\emptyset $, for every $k\notin\{i,j\}$. Setting $D:=\Omega_i\cup\Omega_j\cup \mathcal U$, we get that the function $u:=u_i-u_j$ is the solution of the two-phase problem
\begin{equation}\label{eq min part u}
\min \Big\{ \int_D |\nabla v|^2\,dx + q_i|\Omega_v^+| + q_j|\Omega_v^-| \ : \ v \in H^1_0(D), \ \int_D v_+^2\,dx=\int_D v_-^2\,dx = 1 \Big\}.
\end{equation}
We next show that the solutions of \eqref{eq min part u} satisfy a almost-minimality condition.
\begin{lemma}\label{l:almost_almost_2}
Let $D\subset\R^d$, $u\in H^1_0(D)$ be a Lipschitz continuous function on $\R^d$ and $L$ its Lipschitz constant. Suppose that $u$ is a solution of the minimization problem \eqref{eq min part u}. Then, there is some $r_0>0$ such that $u$ satisfies the following almost-minimality condition:

\noindent For every $r\in(0,r_0)$ and $x_0\in\partial\Omega_u$, 
\begin{align*}\label{e:almost_almost_uTP}
\int_{B_r(x_0)}& |\nabla u|^2\,dx + q_i |\Omega_u^+\cap B_r(x_0)|+q_j |\Omega_u^-\cap B_r(x_0)| \\
&\le \big(1+C_1r^{d+2}\big)\int_{B_r(x_0)} |\nabla v|^2\,dx +q_i |\Omega_v^+\cap B_r(x_0)|+q_j |\Omega_v^-\cap B_r(x_0)|+C_2r^{d+2},\notag
\end{align*} 
for every $v \in H^1_0(D)$ such that $u-v\in H^1_0(B_r(x_0))$, where $C_1=2L^2$ and $\displaystyle C_2=C_1\int_D|\nabla u|^2\,dx$.
\end{lemma}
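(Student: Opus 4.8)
The plan is to reproduce, almost verbatim, the argument of Lemma \ref{l:almost_almost_u}: given an admissible competitor for the two-phase free-boundary functional, renormalize it into a competitor for the constrained minimization problem \eqref{eq min part u}, pay a multiplicative price of size $1+O(r^{d+2})$ for the renormalization, and then localize. The one genuinely new feature is that \eqref{eq min part u} carries \emph{two} $L^2$-constraints, on $v_+$ and on $v_-$ separately, so the competitor must be rescaled by two different positive constants, one on $\{v>0\}$ and one on $\{v<0\}$.

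First I would fix $x_0\in\partial\Omega_u$, $r\in(0,r_0)$ and $v\in H^1_0(D)$ with $u-v\in H^1_0(B_r(x_0))$, so that $u=v$ (hence $u_\pm=v_\pm$) on $D\setminus B_r(x_0)$. Since $u(x_0)=0$ and $u$ is $L$-Lipschitz, $|u|\le Lr$ on $B_r(x_0)$, so $\int_{B_r(x_0)}u_\pm^2\,dx\le L^2 r^{d+2}$ exactly as in Lemma \ref{l:almost_almost_u}; because $v_\pm=u_\pm$ outside $B_r(x_0)$ this gives $\int_D v_\pm^2\,dx\ge 1-L^2 r^{d+2}$. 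Choosing $r_0$ so that $2L^2 r_0^{d+2}\le 1$, the constants $a:=\|v_+\|_{L^2(D)}^{-1}$ and $b:=\|v_-\|_{L^2(D)}^{-1}$ are well defined, and the elementary inequality $(1-X)^{-1}\le 1+2X$ for $X\le\tfrac12$ yields $a^2,b^2\le 1+2L^2 r^{d+2}$. I then set $w:=a\,v_+-b\,v_-\in H^1_0(D)$: clearly $\{w>0\}=\{v>0\}$, $\{w<0\}=\{v<0\}$, and $\int_D w_+^2\,dx=\int_D w_-^2\,dx=1$, so $w$ is admissible in \eqref{eq min part u}. Using that $v_+$ and $v_-$ have essentially disjoint supports, the Dirichlet energy splits, $\int_D|\nabla v|^2\,dx=\int_D|\nabla v_+|^2\,dx+\int_D|\nabla v_-|^2\,dx$, and likewise for $w$, so that $\int_D|\nabla w|^2\,dx=a^2\int_D|\nabla v_+|^2\,dx+b^2\int_D|\nabla v_-|^2\,dx\le(1+2L^2 r^{d+2})\int_D|\nabla v|^2\,dx$.

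The conclusion then follows as in the one-phase case. Testing the minimality of $u$ in \eqref{eq min part u} against $w$, and using $|\Omega_w^\pm|=|\Omega_v^\pm|$, gives
\begin{equation*}
\int_D|\nabla u|^2\,dx+q_i|\Omega_u^+|+q_j|\Omega_u^-|\le(1+2L^2 r^{d+2})\int_D|\nabla v|^2\,dx+q_i|\Omega_v^+|+q_j|\Omega_v^-|.
\end{equation*}
Since $u=v$ on $D\setminus B_r(x_0)$, the quantities $\int_{D\setminus B_r(x_0)}|\nabla u|^2\,dx$ and $|\Omega_u^\pm\setminus B_r(x_0)|$ coincide with their $v$-counterparts; subtracting them from both sides isolates the integrals over $B_r(x_0)$ and leaves an error term $2L^2 r^{d+2}\int_{D\setminus B_r(x_0)}|\nabla v|^2\,dx=2L^2 r^{d+2}\int_{D\setminus B_r(x_0)}|\nabla u|^2\,dx\le 2L^2 r^{d+2}\int_D|\nabla u|^2\,dx$. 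This is precisely the asserted inequality with $C_1=2L^2$ and $C_2=2L^2\int_D|\nabla u|^2\,dx$. I do not expect a serious obstacle here; the only point deserving a line of justification is the splitting of the Dirichlet energy along $v_\pm$, i.e. that $\nabla v=\nabla v_+-\nabla v_-$ a.e. with $\nabla v_+$ and $\nabla v_-$ supported on the disjoint sets $\{v>0\}$ and $\{v<0\}$ (standard for $H^1$ functions), which is exactly what makes the two independent rescalings harmless and lets the one-phase computation go through unchanged.
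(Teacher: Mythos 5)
Your proposal is correct and is exactly what the paper intends: its proof of this lemma consists of the single line ``Follows precisely as in Lemma \ref{l:almost_almost_u}'', and your write-up is the right adaptation of that argument, with the one genuinely new ingredient (rescaling $v_+$ and $v_-$ by two separate constants to restore both $L^2$-constraints, harmless because the Dirichlet energy splits over the disjoint supports) correctly identified and justified.
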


\begin{proof}
Follows precisely as in Lemma \ref{l:almost_almost_u}.
\end{proof}
We are now in position to complete the proof of  Theorem \ref{t:multi}.
\begin{proof}[Proof of Theorem \ref{t:multi} (the two-phase free boundary)] 
We only need to notice that in a neighborhood of any two-phase point $x_0\cap\partial\Omega_i\cap\partial\Omega_j\cap \mathcal D$, Lemma \ref{l:almost_almost_2} implies that $u$ is a almost-minimizer of $J_{\text{\tiny\sc tp}}$, where the matrix $A$ is the identity, $Q_+=q_i$ and $Q_-=q_j$. Thus, it is sufficient to apply Theorem \ref{t:two-phase}.
\end{proof}

\appendix
\section{The flat one-phase free boundaries are $C^{1,\alpha}$}
In this section we discuss a regularity theorem for viscosity solutions of the one-phase problem (without constraint). We fix $f:B_2\to\R$ to  be a bounded and continuous function and $A:B_2\to Sym_d^+$ to be a matrix-valued coercive and bounded function with H\"older continuous coefficients, as in the Introduction. Before we state the result, we recall that the continuous function $u:\R^d\supset B_1\to\R$, $u\ge 0$ is a viscosity solution of 
\begin{equation}\label{e:viscosity2}
-\text{div}(A\nabla u)=f\quad\text{in}\quad \Omega_u\cap B_1,\qquad \big|A^{\sfrac12}[\nabla u]\big|=g\quad\text{on}\quad \partial\Omega_u\cap B_1,
\end{equation}
if the first equation holds in the open set $\Omega_u$ and if, for every $x_0\in\partial\Omega_u$ and every $\varphi\in C^{\infty}(\R^d)$ touching $u\circ F_{x_0}$ from above (below) at zero, we have that $|\nabla \varphi|(0)\ge g(x_0)$ (resp. $|\nabla \varphi|(0)\le g(x_0)$). Recall that {\it touching from above (below)} means that $\varphi(0)=0$ and $\varphi\ge u\circ F_{x_0}$ (resp. $\varphi\le u\circ F_{x_0}$) in $\Omega_u\cap B_1$. Moreover, we suppose that $g$ is H\"older continuous and that there are constants $\eta_g>0$, $C_g>0$ and $\delta_g>0$ such that 
\begin{equation}\label{e:holder:g}
\begin{cases}
\begin{array}{ll}
|g(x)-g(y)|\le C_g|x-y|^{\delta_g}\quad\text{for every}\quad x,y\in\partial\Omega_u\cap B_1,\\
\eta_g\le g(x)\quad\text{for every}\quad x\in\partial\Omega_u\cap B_1.	
\end{array}
\end{cases}
\end{equation}
The following result follows immediately from the results proved in \cite{desilva}.
\begin{theorem}[Flat free boundaries are $C^{1,\alpha}$]\label{t:desilva}
Suppose that $u:B_1\to \R$ is a viscosity solution of \eqref{e:viscosity2} and that $g:\partial\Omega_u\to \R$ satisfies \eqref{e:holder:g}. Then, there exist $\eps>0$ and $\rho>0$ such that if $x_0\in \partial\Omega_u\cap B_1$ and $u$ is such that
$$g(x_0)\,\max\{0, x\cdot \nu -\eps\rho\}\le u\circ F_{x_0}(x)\le g(x_0)\,\max\{0,x\cdot \nu +\eps\rho\}\quad\text{for every}\quad x\in B_\rho,$$
then $\partial\Omega_u$ is $C^{1,\alpha}$ in $B_{\sfrac\rho2}(x_0)$. 
\end{theorem}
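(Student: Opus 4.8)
The plan is to reduce the statement, by a change of variables that freezes the operator at $x_0$ followed by a $1$-homogeneous rescaling, to the flatness-implies-$C^{1,\alpha}$ theorem of De Silva \cite{desilva}. After these two normalizations the problem becomes one for which the operator is a small perturbation of $-\Delta$, the free-boundary slope at the origin equals $1$, the right-hand side is small, and the solution is $\eps$-flat in $B_1$ — precisely the setting in which that theorem applies.

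\textbf{Step 1 (freezing the operator).} Fix $x_0\in\partial\Omega_u\cap B_1$ and set $\bar u:=u\circ F_{x_0}$, with $F_{x_0}(x)=x_0+A_{x_0}^{\sfrac12}(x)$. Since $F_{x_0}$ is affine with symmetric linear part, performing the change of variables $y=F_{x_0}(x)$ in the weak formulation of $-\text{div}(A\nabla u)=f$ and dividing by the constant $\det A_{x_0}^{\sfrac12}$ shows that $\bar u\ge0$ is a viscosity solution of
\[-\text{div}(\bar A\nabla\bar u)=\bar f\ \ \text{in}\ \ \Omega_{\bar u}\cap B_r,\qquad \big|\bar A^{\sfrac12}[\nabla\bar u]\big|=\bar g\ \ \text{on}\ \ \partial\Omega_{\bar u}\cap B_r,\]
for $r>0$ small, where $\bar A(x):=A_{x_0}^{-\sfrac12}A(F_{x_0}(x))A_{x_0}^{-\sfrac12}$, $\bar f:=f\circ F_{x_0}$ and $\bar g:=g\circ F_{x_0}$; the boundary condition transforms as stated because $\bar A^{\sfrac12}$ and $A(F_{x_0}(x))^{\sfrac12}A_{x_0}^{-\sfrac12}$ induce the same quadratic form, and the viscosity structure transforms consistently since the definition in \eqref{e:viscosity2} already tests against functions touching $u\circ F_{x_0}$. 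One has $\bar A(0)=\mathrm{Id}$, $\bar A$ is $\delta$-Hölder with ellipticity and seminorm controlled by $M_{\text{\tiny\sc A}},C_{\text{\tiny\sc A}}$, $\bar f$ is bounded, and $\bar g$ is $\delta_g$-Hölder with $\bar g(0)=g(x_0)\ge\eta_g$. The flatness hypothesis of the theorem is already phrased in terms of $\bar u$.

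\textbf{Step 2 (rescaling, normalization, conclusion).} For $\rho\in(0,r)$ set $v(x):=\big(g(x_0)\rho\big)^{-1}\bar u(\rho x)$ on $B_1$. Then $v\ge0$, $0\in\partial\Omega_v$, and $v$ solves $-\text{div}(A_\rho\nabla v)=f_\rho$ in $\Omega_v\cap B_1$ with $|A_\rho^{\sfrac12}[\nabla v]|=g_\rho$ on $\partial\Omega_v\cap B_1$, where $A_\rho(x)=\bar A(\rho x)$, $f_\rho(x)=\rho\,g(x_0)^{-1}\bar f(\rho x)$ and $g_\rho(x)=g(x_0)^{-1}\bar g(\rho x)$; hence $A_\rho(0)=\mathrm{Id}$, $g_\rho(0)=1$, and $\|f_\rho\|_\infty$, $[A_\rho]_{C^{0,\delta}}$, $[g_\rho]_{C^{0,\delta_g}}$ are bounded respectively by $\eta_g^{-1}\rho\|\bar f\|_\infty$, $\rho^{\delta}[\bar A]_{C^{0,\delta}}$, $\eta_g^{-1}\rho^{\delta_g}[\bar g]_{C^{0,\delta_g}}$. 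Dividing the flatness hypothesis $g(x_0)\max\{0,x\cdot\nu-\eps\rho\}\le\bar u(x)\le g(x_0)\max\{0,x\cdot\nu+\eps\rho\}$ on $B_\rho$ by $g(x_0)\rho$ gives exactly $\max\{0,x\cdot\nu-\eps\}\le v(x)\le\max\{0,x\cdot\nu+\eps\}$ on $B_1$. De Silva's flatness theorem \cite{desilva} provides a threshold $\bar\eps>0$, depending only on the dimension, the ellipticity bounds and $\delta,\delta_g$, such that if $v$ is as above with $\eps$, $\|f_\rho\|_\infty$, $[A_\rho]_{C^{0,\delta}}$, $[g_\rho]_{C^{0,\delta_g}}$ all $\le\bar\eps$, then $\partial\Omega_v$ is $C^{1,\alpha}$ in $B_{\sfrac12}$. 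We therefore take $\eps:=\bar\eps$ and then choose $\rho$ small enough (in terms of $\eta_g$, $\|\bar f\|_\infty$, $[\bar A]_{C^{0,\delta}}$, $[\bar g]_{C^{0,\delta_g}}$) so that the three smallness conditions hold; undoing the rescaling and the invertible affine map $F_{x_0}$ — both of which preserve $C^{1,\alpha}$ regularity and distort radii only by factors controlled by $M_{\text{\tiny\sc A}}^{\sfrac12}$ — we conclude that $\partial\Omega_u$ is $C^{1,\alpha}$ in a neighbourhood of $x_0$ of radius comparable to $\rho$, which after a harmless adjustment of the constant yields the ball $B_{\sfrac\rho2}(x_0)$ in the statement.

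\textbf{Main obstacle.} The only substantive ingredient is the version of De Silva's flatness theorem that simultaneously allows Hölder-continuous coefficients $A$, a Hölder-continuous free-boundary slope $g$, and a bounded right-hand side $f$; once this is granted, the rest is the elementary but somewhat tedious bookkeeping of Steps 1–2, tracking how the ellipticity constants, the Hölder seminorms and $\|f\|_\infty$ transform under $F_{x_0}$ and under the rescaling so as to land exactly inside the hypotheses of that theorem. If one prefers to invoke only the constant-coefficient statement, one inserts an additional freezing step at each scale — replacing $A_\rho$ by $A_\rho(0)=\mathrm{Id}$ and absorbing the resulting $O(\rho^{\delta})$ error into the improvement-of-flatness iteration — exactly in the spirit of \cite{desilva}.
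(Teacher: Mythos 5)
Your proposal follows essentially the same route as the paper's proof: both freeze the operator via $F_{x_0}$, rescale by a factor $\rho$ (absorbing $g(x_0)$), and then invoke De Silva's improvement-of-flatness machinery. The one place where the two differ is in how you call upon \cite{desilva}. Your Step~2 cites a version of the flatness theorem that \emph{simultaneously} tolerates H\"older-continuous $A$, H\"older-continuous slope $g$, and bounded $f$; the reference \cite{desilva} literally establishes the improvement of flatness only for the Laplacian with $g\equiv 1$ (and a bounded right-hand side). The paper bridges this gap explicitly: it uses De Silva's Lemma~4.1 (restated as Lemma~\ref{l:geometric}) only for the geometric improvement \eqref{e:flatness:geometric}, and then directly verifies — using the H\"older continuity of $A$ and $g$ and the scaling of $f$ — that the remaining flatness conditions \eqref{e:flatness:Af} and \eqref{e:flatness:gradient} improve from $\eps$ to $\sfrac\eps2$ after rescaling by $r\le r_1$, with the frozen-coefficient error $\| A_{x_0}^{\sfrac12}A_{F_{x_0}(ry_0)}^{-\sfrac12}-\mathrm{Id}\|\le Cr_1^\delta$ and the oscillation of $g$ absorbed into the $\eps^2$-tolerance, before iterating. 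This is exactly what you describe in your closing remark as the alternative route (``insert an additional freezing step at each scale and absorb the $O(\rho^\delta)$ error into the improvement-of-flatness iteration''), so the mathematical content agrees; the paper simply carries out the bookkeeping that you defer to the black box. As written, your main Step~2 leans on a statement slightly stronger than what \cite{desilva} literally proves, so in a self-contained writeup you should promote the alternative route to the primary argument, as the paper does.
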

\begin{remark}
Notice that since in dimension two all the blow-up limits of $u_+$ (given by Theorem \ref{t:two-phase}) are half-plane solutions (Proposition \ref{p:classification}), we have that the flatness assumption of the above Theorem is satisfied at every point of the free boundary $\partial\Omega_u^+$. We also notice that, in our case, we have $g=\mu_+$, which is H\"older continuous by Lemma \ref{l:holder}.
\end{remark}

\begin{definition}[Flatness]
 Let $u:B_1\to\R$ be continuous, $u\ge 0$ and $u\in H^1(B_1)$. 	We say that $u$ is $(\eps,\nu)$-flat, if there are a matrix-valued $A:B_1\to Sym_d^+$ with H\"older continuous coefficients, and a continuous $f:B_1\to\R$ such that: 
\begin{align}
	-\text{\rm div}\,(A\nabla u)=f\quad\text{in}\quad \Omega_u\cap B_1\,;\label{e:flatness:equation}\\
	 \|f\|_{L^\infty(B_1)}\le\eps\quad\text{and}\quad \|a_{ij}-\delta_{ij}\|_{L^\infty(B_1)}\le \eps^2\quad\text{for every}\quad 1\le i,j\le d\,;\label{e:flatness:Af}\\
	1-\eps^2\le |\nabla u|\le 1+\eps^2\quad\text{on}\quad \partial\Omega_u\cap B_1\,;\label{e:flatness:gradient}\\
	\max\{0, x\cdot \nu -\eps\}\le u(x)\le \max\{0, x\cdot \nu +\eps\}\quad\text{for every}\quad x\in B_1\label{e:flatness:geometric}.
\end{align}
\begin{remark}
The condition \eqref{e:flatness:gradient} is intended in a viscosity sense, that is, for any $\varphi\in C^{\infty}(B_1)$, we have: 
\begin{itemize}
\item if $\varphi(x_0)=u(x_0)$ for some $x_0\in \partial \Omega_u\cap B_1$ and $\varphi^+\ge u$ in $\Omega_u\cap B_1$, then $|\nabla \varphi(x_0)|\ge 1-\eps^2$;
\item if $\varphi(x_0)=u(x_0)$ for some $x_0\in \partial \Omega_u\cap B_1$ and $\varphi\le u$ in $\Omega_u\cap B_1$, then $|\nabla \varphi(x_0)|\le 1+\eps^2$.
	\end{itemize}
\end{remark}
\end{definition}
In order to prove Theorem \ref{t:desilva} one has to show that the flatness improves at lower scales, that is, if $u$ is $(\eps,\nu)$-flat, then a rescaling $u_r$ of $u$ is $(\sfrac\eps2,\nu')$-flat for some $\nu'$, which is close to $\nu$. Of course, the essential (and hardest) part of the proof is to show the improvement of the geometric flatness \eqref{e:flatness:geometric}. This was proved by De Silva in \cite[Lemma 4.1]{desilva}.
\begin{lemma}[Improvement of the geometric flatness]\label{l:geometric}
There are universal constants $C>0$, $r_0>0$ and $\eps_0>0$ such that if $u$ is $\eps$-flat in the direction $\nu$, for some $\eps\in(0,\eps_0)$ and $\nu\in \partial B_1$, then, for every $r\in(0,r_0)$ there is some $\nu'\in\partial B_1$ such that $|\nu-\nu'|\le C\eps^2$ and 
$$\max\left\{0, x\cdot \nu' -\frac\eps2\right\}\le u_r(x)\le \max\left\{0, x\cdot \nu' +\frac\eps2\right\}\quad\text{for every}\quad x\in B_1,$$
where $u_r:B_1\to\R$ is the one-homogeneous rescaling $u_r(x)=\frac{u(rx)}r$.
\end{lemma}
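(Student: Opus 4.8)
The plan is to run De Silva's compactness--linearization argument, so I will only sketch it. I would argue by contradiction: if the lemma is false, then choosing $C=r_0=\eps_0=k^{-1}$ produces $\eps_k\to0$, $r_k\to0$, directions $\nu_k\in\partial B_1$ and functions $u_k$ that are $\eps_k$-flat in the direction $\nu_k$ (with matrices $A_k$ and data $f_k$ as in the Flatness definition), but for which no $\nu'$ with $|\nu_k-\nu'|\le k^{-1}\eps_k^2$ makes $(u_k)_{r_k}$ satisfy the geometric flatness bound \eqref{e:flatness:geometric} at level $\tfrac{\eps_k}{2}$. Up to a rotation assume $\nu_k\to\nu$, and up to a translation of size $O(\eps_k)$ (absorbed into the constants) assume $0\in\partial\Omega_{u_k}$ for every $k$. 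Introduce the linearized functions
$$\tilde u_k(x):=\frac{u_k(x)-(x\cdot\nu_k)_+}{\eps_k}\,,\qquad x\in\overline{\Omega_{u_k}}\cap B_{1/2}\,,$$
which, by \eqref{e:flatness:geometric}, are uniformly bounded, the free boundary $\partial\Omega_{u_k}$ being contained in the strip $\{|x\cdot\nu_k|\le\eps_k\}$.

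The key step is the \emph{partial Harnack inequality}, \cite[Lemma~4.1]{desilva} in its perturbative form: at a scale where both the positivity set and the zero set of $u_k$ occupy a definite portion of a ball, the oscillation of $u_k$ transverse to $\nu_k$ contracts by a geometric factor on a smaller ball, the proof using a barrier built from the fundamental solution of $-\mathrm{div}(A_k\nabla\,\cdot\,)$ in which the terms coming from $\|A_k-\mathrm{Id}\|\le\eps_k^2$ and $\|f_k\|\le\eps_k$ are of order $o(\eps_k)$, hence harmless. Iterating this along dyadic scales yields a uniform $C^{0,\beta}$ bound for $\tilde u_k$ on compact subsets of $B_{1/2}$, so that, up to a subsequence, $\tilde u_k\to\tilde u_\infty$ locally uniformly. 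Dividing the interior equation $-\mathrm{div}(A_k\nabla u_k)=f_k$ by $\eps_k$ and letting $k\to\infty$ shows that $\tilde u_\infty$ is harmonic in $B_{1/2}\cap\{x\cdot\nu>0\}$, while the viscosity gradient condition $1-\eps_k^2\le|\nabla u_k|\le1+\eps_k^2$ on $\partial\Omega_{u_k}$ passes to the limit as the homogeneous Neumann condition $\partial_\nu\tilde u_\infty=0$ on $B_{1/2}\cap\{x\cdot\nu=0\}$. Reflecting evenly across $\{x\cdot\nu=0\}$ makes $\tilde u_\infty$ harmonic in a full ball, hence smooth; since $0$ is a free-boundary point of every $u_k$, both $\tilde u_\infty(0)=0$ and $\partial_\nu\tilde u_\infty(0)=0$, so there are a universal $C_0$ and a vector $\xi$ tangent to $\{x\cdot\nu=0\}$, $|\xi|\le C_0$, with $|\tilde u_\infty(x)-x\cdot\xi|\le C_0|x|^2$ on $B_{1/4}$.

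To close the contradiction, fix $r_0$ with $8C_0r_0\le1$; for the $r_k\le r_0$ above and $k$ large, the quadratic bound on $\tilde u_\infty$ together with the uniform convergence $\tilde u_k\to\tilde u_\infty$ on $\overline{B_{r_k}}$ gives $|u_k(x)-(x\cdot\nu_k+\eps_k\,x\cdot\xi)_+|\le\tfrac14\eps_k r_k$ on $B_{r_k}$, i.e. $(u_k)_{r_k}$ satisfies \eqref{e:flatness:geometric} at level $\tfrac{\eps_k}{4}$ in the direction $\nu'_k:=(\nu_k+\eps_k\xi)/|\nu_k+\eps_k\xi|$, with $|\nu_k-\nu'_k|\le2C_0\eps_k$. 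This already contradicts the choice of $u_k$; the sharper bound $|\nu_k-\nu'_k|\le C\eps_k^2$ recorded in the statement is obtained by the standard refinement in which, at each dyadic step, one subtracts the optimal affine correction and exploits that in the Flatness definition the perturbations $A-\mathrm{Id}$ and $|\nabla u|-1$ are already of size $\eps^2$. The only genuinely hard point is the compactness step, namely the partial Harnack inequality producing the uniform Hölder bound on the linearized sequence up to the free boundary, together with the verification that the variable coefficients $A$ and the right-hand side $f$ enter there only as lower-order errors; everything after that — the limiting Neumann problem, its smoothness, and the unwinding above — is routine.
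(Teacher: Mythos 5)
The paper offers no proof of this lemma at all: it is attributed verbatim to \cite[Lemma 4.1]{desilva}, so the only question is whether your sketch of De Silva's compactness--linearization argument is sound. Its core — the partial Harnack inequality up to the free boundary for the normalized differences $\tilde u_k$, compactness, identification of the limit with a solution of the linearized Neumann problem, even reflection and the quadratic expansion at the origin — is indeed the right argument, and the perturbations $A-\mathrm{Id}$ and $f$ do enter only as lower-order errors as you say. (Minor point: the partial Harnack inequality is not Lemma 4.1 of \cite{desilva}; Lemma 4.1 there is the improvement of flatness itself.)

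The genuine problem is the tilt bound. Your argument produces a direction $\nu_k'$ with $|\nu_k-\nu_k'|\le 2C_0\eps_k$, i.e.\ of order $\eps_k$, while your contradiction hypothesis only excludes directions within $k^{-1}\eps_k^2$ of $\nu_k$ (note also that the quantifier on $C$ is backwards: negating ``there exists a universal $C$'' requires $C_k\to\infty$, not $C_k\to 0$). Since $2C_0\eps_k\gg C_k\eps_k^2$ for any reasonable $C_k$ once $\eps_k\to 0$, the line ``this already contradicts the choice of $u_k$'' does not follow, and the ``standard refinement'' you invoke to reach $C\eps^2$ does not exist: the bound $|\nu-\nu'|\le C\eps^2$ is in fact false as stated. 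Take $u(x)=(x\cdot\nu')_+$ with $|\nu-\nu'|=0.9\,\eps$: this is $\eps$-flat in the direction $\nu$ (with $A=\mathrm{Id}$, $f=0$) and equals its own rescaling, and testing \eqref{e:flatness:geometric} at level $\sfrac{\eps}{2}$ at points of $\partial B_1\cap\{x\cdot\nu'=0\}$ forces any admissible $\nu''$ to satisfy $|\nu'-\nu''|\le \sfrac{\eps}{2}+O(\eps^3)$, hence $|\nu-\nu''|\ge 0.39\,\eps$, which exceeds $C\eps^2$ for $\eps$ small. The correct statement — the one De Silva proves and the one the iteration in the proof of Theorem \ref{t:desilva} actually needs, since $\sum_k\eps 2^{-k}<\infty$ — is $|\nu-\nu'|\le C\eps$; with that correction, with $C=2C_0$ fixed \emph{after} the compactness step, $r_0$ chosen so that $8C_0r_0\le 1$, and the contradiction run over $\eps$ alone, your sketch closes.
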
	
\begin{proof}[\bf Proof of Theorem \ref{t:desilva}] We will first prove that the flatness condition \eqref{e:flatness:equation}-\eqref{e:flatness:geometric} improves at smaller scales. We fix $x_0\in \partial\Omega_u\cap B_1$ and we consider the function $\tilde u=\frac{1}{g(x_0)}u\circ F_{x_0}$ (recall that $F_{x_0}(x)=x_0+A_{x_0}^{\sfrac12}[x]$). Let $\eps$ and $r_0$ be the constants from Lemma \ref{l:geometric}. We will prove that there is $r_1\le r_0$ such that: if $\tilde u$ is $(\eps,\nu)$-flat, then for every $r\le r_1$, $\tilde u_r$ is $(\sfrac{\eps}2,\nu')$-flat, for $\nu'$ given again by Lemma \ref{l:geometric}. It is sufficient that the conditions \eqref{e:flatness:equation}, \eqref{e:flatness:Af} and \eqref{e:flatness:gradient} are satisfied for $\tilde u_r$ with the flatness parameter $\sfrac{\eps}2$.
We notice that $\tilde u$ is a viscosity solution of 
\begin{equation}\label{e:viscosity22}
-\text{div}(\tilde A\nabla \tilde u)=\tilde f\quad\text{in}\quad \Omega_{\tilde u},\qquad \big| \tilde A^{\sfrac12}[\nabla \tilde u]\big|=\tilde g\quad\text{on}\quad \partial\Omega_{\tilde u},
\end{equation}
where $\tilde A_x=A_{x_0}^{-\sfrac12}A_{F_{x_0}(x)}A_{x_0}^{-\sfrac12}$, $\tilde f=\frac1{g(x_0)}f\circ F_{x_0}$, $\tilde g=\frac{1}{g(x_0)}g\circ F_{x_0}$ and $\tilde A_x^{\sfrac12}=A_{F_{x_0}(x)}^{\sfrac12}\circ A_{x_0}^{-\sfrac12}$. Notice that $0\in\partial\Omega_{\tilde u}$ and set $\displaystyle\tilde u_r(x):=\frac{\tilde u(rx)}{r}$. Thus, for small enough $r>0$, $\tilde u_r$ is a viscosity solution of 
\begin{equation}\label{e:viscosity33}
-\text{div}(\tilde A_r\nabla \tilde u_r)=\tilde f_r\quad\text{in}\quad \Omega_{\tilde u}\cap B_1,\qquad \big| \tilde A_r^{\sfrac12}[\nabla \tilde u_r]\big|=\tilde g_r\quad\text{on}\quad \partial\Omega_{\tilde u}\cap B_1,
\end{equation}
where $\tilde A_r(x):=\tilde A(rx)$, $\tilde f_r(x)=r\tilde f(rx)$, $\tilde g_r(x)=\tilde g(rx)$ and $\tilde A_r^{\sfrac12}(x)=A_{F_{x_0}(rx)}^{\sfrac12}\circ A_{x_0}^{-\sfrac12}$. Now, if $\tilde u$ is $(\eps,\nu)$-flat, then the H\"older continuity of the coefficients $a_{ij}$ and the boundedness of $f$ imply that  \eqref{e:flatness:Af} holds with $\sfrac\eps2$ and $\tilde u_r$, for any $r\le r_1$, where $r_1\le r_0$, is a universal constant depending on the H\"older norm of $a_{ij}$. Now, in order to get \eqref{e:flatness:gradient} for $\sfrac\eps2$ and $\tilde u_r$, we suppose that $\varphi\in C^{\infty}(B_1)$ touches $\tilde u_r$ from below at a point $y_0\in B_1\cap \partial\{\tilde u_r>0\}$. Thus, we have that 
$$\big|A_{F_{x_0}(ry_0)}^{\sfrac12}\circ A_{x_0}^{-\sfrac12}[\nabla \varphi(y_0)]\big|\le \frac{g(F_{x_0}(ry_0))}{g(x_0)},$$
and so, if $\|\cdot\|=\|\cdot\|_{\mathcal L(\R^d)}$ stands for the space of $d\times d$ matrices, we have 
$$|\nabla \varphi(y_0)|\le \| A_{x_0}^{\sfrac12}\circ A_{F_{x_0}(ry_0)}^{-\sfrac12}\|\frac{g(F_{x_0}(ry_0))}{g(F_{x_0}(0))}.$$
Now, by the H\"older continuity (and the uniform boundedness from below) of $g$, we can choose $r_1$ such that 
$$\frac{g(F_{x_0}(ry_0))}{g(F_{x_0}(0))}\le 1+\frac{\eps^2}{10}.$$
On the other hand, there are universal constants $C$ and $\delta>0$, depending only on the H\"older exponent $\delta_A$ and the norm $C_A$, of the matrix-valued function $A$, such that 
$$\| A_{x_0}^{\sfrac12}\circ A_{F_{x_0}(ry_0)}^{-\sfrac12}-Id\|\le \| A_{x_0}^{\sfrac12}- A_{F_{x_0}(ry_0)}^{\sfrac12}\|\|A_{F_{x_0}(ry_0)}^{-\sfrac12}\|\le C |ry_0|^{\delta}\le C r_1^{\delta}.$$
Choosing $r_1$ such that $C r_1^{\delta}\le \frac{\eps^2}{10}$ and using the triangular inequality, we get 
$$|\nabla \varphi(y_0)|\le \| A_{x_0}^{\sfrac12}\circ A_{F_{x_0}(ry_0)}^{-\sfrac12}\|\frac{g(F_{x_0}(ry_0))}{g(F_{x_0}(0))}\le \left(1+\frac{\eps^2}{10}\right)^2\le 1+(\sfrac{\eps}2)^2,$$
which completes the proof of the improvement of flatness for $\tilde u$, the case when $\varphi$ touches from above being analogous. Now, the claim follows by a standard argument, similar to the one we used in Section \ref{s:constrained}.
\end{proof}

\bigskip\bigskip
\noindent {\bf Acknowledgments.} 
The first author has been partially supported by the NSF grant DMS 1810645. The second and third author have been partially supported by Agence Nationale de la Recherche (ANR) by the projects GeoSpec (LabEx PERSYVAL-Lab, ANR-11-LABX-0025-01) and CoMeDiC (ANR-15-CE40-0006).

\end{document}